\newtheorem{theorem}{Theorem}[section]
\newtheorem{coro}[theorem]{Corollary}
\newtheorem{proposition}[theorem]{Proposition}
\newtheorem{definition}[theorem]{Definition}
\newtheorem{lemma}[theorem]{Lemma}
\numberwithin{equation}{section}
\newcommand{\N}{\mathbb{N}}
\newcommand{\Z}{\mathbb{Z}}
\newcommand{\R}{\mathbb{R}}
\newcommand{\RR}{\mathcal{R}}
\newcommand{\FF}{\mathcal{F}}
\newcommand{\EE}{\mathcal{E}}
\newcommand{\CC}{\mathcal{C}}
\newcommand{\BB}{\mathcal{B}}
\newcommand{\PP}{\mathcal{P}}
\newcommand{\LL}{\mathcal{L}}
\newcommand{\HH}{\mathcal{H}}
\newcommand{\NN}{\mathcal{N}}
\newcommand{\m}{\mathfrak{m}}
\renewcommand{\SS}{\mathcal{S}}
\newcommand{\s}{\textnormal{\textsf{s}}}
\newcommand{\dd}{\mathrm{d}}
\begin{document}
\title{Products of snowflaked euclidean lines are not minimal for looking down }
\author{Matthieu Joseph}
\address{D\'epartement de Math\'ematiques\\ \'Ecole Normale Sup\'erieure de Lyon\\ 69364 Lyon Cedex 07, France}
\email{matthieu.joseph@ens-lyon.fr}
\thanks{M.J. is supported by Erasmus and ExploRA'Sup grants}
\author{Tapio Rajala}
\address{University of Jyvaskyla, Department of Mathematics and Statistics,
P.O. Box 35 (MaD), FI-40014 University of Jyvaskyla, Finland}
\email{tapio.m.rajala@jyu.fi}
\thanks{T.R. is supported by the Academy of Finland project no. 274372.}
\keywords{Ahlfors-regularity, biLipschitz pieces, BPI-spaces}
\subjclass[2010]{Primary 26B05. Secondary 28A80.}
\date{\today}
\maketitle
\begin{abstract}
We show that products of snowflaked Euclidean lines are not minimal for looking down. This question was raised in \textit{Fractured fractals and broken dreams}, Problem $11.17$, by David and Semmes.

The proof uses arguments developed by Le Donne, Li and Rajala to prove that the Heisenberg group is not minimal for looking down. By a method of shortcuts, we define a new distance $d$ such that the product of snowflaked Euclidean lines looks down on $(\R^N,d)$, but not vice versa.
\end{abstract}

\tableofcontents

\section{Introduction}

The concept of BPI space (Big Pieces of Itself) was introduced by David and Semmes in \cite{david} in order to provide a framework in which to work with self-similarity in metric spaces setting. A BPI space is more or less a metric space in which any two balls contain big pieces that look almost the same up to scaling and bounded distortions. They also introduced a notion of BPI equivalence in order to understand and classify BPI geometries. Two BPI spaces are BPI equivalent if they possess pieces of positive measure that are biLipschitz equivalent. With the aim of classifying BPI spaces that are not BPI equivalent, they defined a notion of looking down between BPI spaces of the same dimension. A natural question arises when working with looking down BPI spaces: what are the most primitive BPI spaces? Such BPI spaces are called minimal for looking down (see Section \ref{preli} for the definitions). 

By using ideas of \cite{tapio}, where they proved that the Heisenberg group is not minimal for looking down, we prove the following theorem, which gives an answer to Problem $11.17$ in \cite{david}. 
\begin{theorem}\label{theorem} Given $s_1,\dots s_N\in(0,1]$, the space $\displaystyle\Big( \R^N,\sum_{k=1}^N|x_k-y_k|^{s_k}\Big)$
is minimal for looking down if and only if $\displaystyle\sum_{1\leq k\leq N} s_k =N$. 
\end{theorem}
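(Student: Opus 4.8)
The plan is to treat the two implications separately, the first being short and the second carrying all the weight. For the ``if'' direction, note that if $s_1,\dots ,s_N\in(0,1]$ satisfy $\sum_k s_k=N$, then necessarily $s_k=1$ for every $k$, so the metric $\sum_k|x_k-y_k|^{s_k}$ is the $\ell^1$ metric on $\R^N$, which is biLipschitz to the Euclidean one. Since $\R^N$ is minimal for looking down (see \cite{david}) and minimality for looking down is a BPI invariant — two spaces that look down on each other are BPI equivalent — the space $(\R^N,\sum_k|x_k-y_k|^{s_k})$ is minimal.

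For the ``only if'' direction, assume some $s_j<1$, so that $Q:=\sum_{k=1}^N 1/s_k>N$. Since a $d$-ball of radius $r$ about $x$ is comparable, as a subset of $\R^N$, to the box $\prod_k[x_k-r^{1/s_k},x_k+r^{1/s_k}]$, the space $X:=(\R^N,\sum_k|x_k-y_k|^{s_k})$ is Ahlfors $Q$-regular for Lebesgue measure. Two structural features of $X$ drive the argument: $X$ is exactly self-similar, so $\mathrm{Tan}(X,x)$ is isometric to $X$ at every point; and every Lipschitz curve into $X$ is constant in each coordinate $k$ with $s_k<1$ (a Lipschitz bound forces $\gamma_k$ to be H\"older of exponent $1/s_k>1$). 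To show $X$ is not minimal I would construct, by the method of shortcuts of Le Donne--Li--Rajala \cite{tapio}, a metric $d$ on $\R^N$ as follows: fix a fast-decreasing sequence of scales $(\rho_i)$ and, at scale $\rho_i$ and over a sparse net of pairs of points at $d_X$-distance $\asymp\rho_i$, glue in a ``fast lane'', a rectifiable arc of length $\varepsilon_i\rho_i$ with $\varepsilon_i\to 0$, and let $d$ be the induced length-type metric on the resulting space $Y$, which we identify set-theoretically with $\R^N$ up to a set of $\HH^Q$-measure zero. The sparsity parameters are tuned so that $Y$ is Ahlfors $Q$-regular and BPI, so that $d\le d_X$ everywhere, and yet so that near $\HH^Q$-a.e.\ point of $Y$ and at arbitrarily small scales there is a fast lane along which $d\ll d_X$, the last property ensuring that no positive-measure piece of $X$ is biLipschitz to a piece of $Y$.

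With $Y$ in hand, $X$ looks down on $Y$ via the identity map $\iota\colon X\to Y$: it is $1$-Lipschitz because $d\le d_X$, and it is regular on a positive-measure set because the fast lanes are $\HH^Q$-null (here $Q>1$) and, by sparsity, $d$ does not collapse the $\HH^Q_X$-measure of a large subset of $\R^N$, so that $\iota_*\HH^Q_X$ and $\HH^Q_Y$ are comparable there. It therefore remains to prove that $Y$ does \emph{not} look down on $X$, and this is the main obstacle. I would attack it by a blow-up argument, exactly as for the Heisenberg group in \cite{tapio}. Suppose for contradiction there were a closed $A\subseteq Y$ with $\HH^Q(A)>0$ and a regular map $f\colon A\to X$ with $\HH^Q(f(A))>0$. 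Regular maps carry points of Lebesgue density to points of Lebesgue density, so one may fix a density point $p$ of $A$ whose image is a density point of $f(A)$, rescale $(Y,d,p)$ and $(X,d_X,f(p))$ along a carefully chosen sequence of scales, and pass to pointed Gromov--Hausdorff limits: $Y$ blows up to a $Q$-regular tangent $\widehat Y$, the target blows up to $\mathrm{Tan}(X,f(p))$, isometric to $X$ by self-similarity, and $f$ blows up to a regular map $\widehat f\colon\widehat A\to X$ with $\HH^Q(\widehat f(\widehat A))>0$; thus $\widehat Y$ looks down on $X$. The scales and base point are selected so that the fast lanes survive in the limit: $\widehat Y$ then carries, on a positive-measure piece, a genuinely rectifiable (Euclidean-like) direction — morally it splits there as a product of a Euclidean segment with a regular space of dimension $Q-1$ — whereas any regular map onto a positive-measure subset of $X$ must collapse Lipschitz curves lying in snowflaked coordinates, hence would decrease Hausdorff dimension, which is impossible.

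The step I expect to be genuinely hard is precisely this last one: arranging the shortcuts so that the right blow-up exposes the rectifiable direction on a set of positive measure (the tension being that the same shortcuts must remain sparse enough to preserve Ahlfors $Q$-regularity of $Y$), identifying the exact metric invariant of the shortcut tangent that obstructs looking down on a product of honestly snowflaked lines, and verifying that the blow-up of $f$ is again regular with image of positive measure — all of which follow the template of \cite{tapio}. The remaining verifications — Ahlfors $Q$-regularity and the BPI property of $Y$, and the regularity of $\iota$ — are comparatively routine bookkeeping in the shortcut parameters.
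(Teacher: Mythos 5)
Your high-level plan --- build, by the Le Donne--Li--Rajala shortcut method, a space lying strictly below $(\R^N,d_\s)$ and then rule out a Lipschitz map back by a blow-up --- is the same as the paper's, but the two steps that carry all the weight are exactly the ones you leave open, and as stated they do not go through. First, the obstruction. Looking down only requires a Lipschitz map whose image has positive $\HH^\alpha$-measure, so preventing biLipschitz pieces is not enough; your reduction to ``regular'' maps (and the claim that they send density points to density points) is asserted, not proved. More seriously, your proposed invariant --- that a blow-up of $Y$ contains a rectifiable direction on a positive-measure piece, and that a map onto positive measure in $X$ would then ``decrease Hausdorff dimension'' --- is unsubstantiated: your fast lanes are one-dimensional arcs, hence $\HH^Q$-null, placed on a sparse net with relative lengths $\varepsilon_i\to 0$, so there is no reason a tangent acquires a positive-measure product piece with a Euclidean factor; and even granting such a piece, collapsing a family of curves only forces the image to be null if one has a Fubini-type product structure to integrate over (moreover, when some $s_k=1$ the target $X$ itself contains rectifiable curves, so ``must collapse'' needs an argument). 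The paper's obstruction is different and sharper: the shortcuts are genuine identifications ($d_\RR(p,q)=0$ while $|p-q|>0$), placed only in the minimally snowflaked coordinates, and Proposition \ref{no_lipschitz_function} shows via Rademacher's theorem that every Lipschitz map $([0,1],d_\RR)\to(\R,|\cdot|^s)$ is constant; Theorem \ref{blow-up} then upgrades a Lipschitz map defined on a closed subset with positive-measure image to one defined on all of $[0,1]^N$, to which the functional obstruction applies coordinatewise (Proposition \ref{proposition_lipschitz}).

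Second, the BPI property of your $Y$ is not ``routine bookkeeping'': with lane lengths $\varepsilon_i\rho_i$, $\varepsilon_i\to 0$, and shortcuts only at a sparse set of scales and locations, balls at different scales and places are not comparable, and it is unclear that $Y$ is BPI at all --- yet BPI of the new space, together with exact self-similarity of the shortcut pattern, is what makes the whole scheme work: it is needed for $Y$ to be a legitimate competitor in the looking-down order, and it is what makes the rescaling maps $f_{i,j}$ of the paper isometries for $d_{\s,\RR}$, so that the blown-up limit map lives on the very space where the obstruction holds. The paper achieves this by taking zero-length identifications on the exactly self-similar grid $\SS_n$ at every scale, verifying Ahlfors regularity through the invisible-pieces criterion of \cite{tapio}, and proving BPI by hand for $([0,1],d_\RR)$ before taking products. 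Your ``if'' direction is fine (all $s_k=1$, and $d_\s$ is then biLipschitz to the Euclidean metric, which is minimal by Kirchheim \cite{kirchheim}), except that you should not invoke the claim that mutual looking down implies BPI equivalence --- it is neither needed nor available; biLipschitz invariance of the looking-down relation suffices.
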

\noindent
The distance defined above will be denoted by $d_\textsf{s}$, where $\textsf{s}$ stands for $(s_1,\dots,s_N)\in (0,1]^N$. It is the $\ell^1$ product distance of the snowflaked distances $|\cdot|^{s_k}$ defined on $\R$: if $x=(x_1,\dots,x_N)$ and $y=(y_1,\dots,y_N)$ are in $\R^N$, then 
$$d_{\textsf{s}}(x,y)=\sum_{k=1}^N|x_k-y_k|^{s_k}.$$

Kirchheim proved in \cite{kirchheim} that Euclidean spaces are minimal for looking down, that is, if $\s=(1,\dots, 1)$, then $(\R^N,d_\s)$ is minimal for looking down. To prove Theorem \ref{theorem}, it is thus sufficient to show that if $\sum s_k <N$, then the space $(\R^N,d_\textsf{s})$ is not minimal for looking down. \\

From now on we fix an integer $N\geq 1$ and an $N$-tuple ${\s=(s_1,\dots,s_N)\in (0,1]^N}$ such that $\sum s_k<N$. We denote by $s$ the \textit{minimum snowflaking factor}, i.e. $s=\min\left\{s_k,\ 1\leq k\leq N\right\}$, and by $L$ the \textit{minimally snowflaked layer}, that is the subset of $\left\{1,\dots,N\right\}$ where the snowflaking factor is minimum:
$$L=\left\{k\in\left\{1,\dots,N\right\},\ s_k=s\right\}.$$

The strategy to prove Theorem \ref{theorem} is the following. First we look at a one dimensional problem. We construct a quotient semi-distance $d_\RR$ on $\R$ associated with an equivalence relation $\RR$ using the shortening technique developed in \cite{tapio}. The construction of $d_\RR$ is made in a self-similar way so that the subspace $[0,1]$ of $\R$, endowed with this semi-distance $d_\RR$ is a BPI space. To be more precise, the quotient space $([0,1]/d_\RR,\overline{d}_\RR)$ is a BPI space. The semi-distance $d_\RR$ verifies $d_\RR\leq |\cdot|^s$, where $|\cdot|$ is the Euclidean distance on $\R$, and $s$ the minimum snowflaking factor. Moreover any Lipschitz function from $([0,1],d_{\RR})$ to $(\R,|\cdot|^s)$ is constant.

Then, we look at the $N$-dimensional problem. We slightly modify the distance $d_\textsf{s}$ in the minimally snowflaked layer $L$ by replacing the terms $|\cdot|^{s}$ with the semi-distance $d_\RR$. This gives a new semi-distance $d_{\s,\RR}$ on $\R^N$. As a product of bounded BPI spaces is a BPI space, the quotient space $([0,1]^N/d_{\s,\RR},\overline{d}_{\s,\RR})$ is a BPI space. Suppose then that $([0,1]^N,d_{\s,\RR})$ looks down on $(\R^N,d_\s)$. There exists a Lipschitz map $g:(A,d_{\s,\RR})\to (\R^N,d_\s)$ such that $\HH^\alpha\big(g(A)\big)>0$, where $\alpha$ is the Ahlfors dimension of $(\R^N,d_\s)$ and $A$ a closed subset of $[0,1]^N$. By a blow-up argument, we prove that there exists a Lipschitz map $f:([0,1]^N,d_{\s,\RR})\to (\R^N,d_{\s})$ whose image has positive measure, which is in contradiction with the property on Lipschitz functions from $([0,1],d_{\RR})$ to $(\R,|\cdot|^s)$.

Section \ref{preli} deals with definitions related to BPI spaces, quotient semi-distance, etc. In Section \ref{product_BPI_section}, we prove that the product of two BPI spaces, both bounded or both unbounded, is a BPI space. In Section \ref{new semi-distance}, we construct the semi-distance $d_\RR$ on $\R$ and we prove that the metric space $([0,1],d_\RR)$ is a BPI space. In Section \ref{lipschitz_function}, we prove that every Lipschitz function from $([0,1],d_\RR)$ to $(\R,|\cdot|^s)$ is constant. Finally, in Sections \ref{blow_up} and \ref{conclusion}, we conclude by a blow-up process that the space $(\R^N,d_\s)$ is not minimal for looking down.

\section{Preliminaries}\label{preli}

In what follows, $\N=\{1,2,3,\dots\}$.
By a measure $\m$ on a metric space $(X,d)$ we always mean an outer measure such that Borel sets are $\m$-measurable. Recall that an outer measure $\m$ on a set $X$ is a map $\m : \PP(X)\to [0,+\infty]$ defined on all subsets of $X$, such that $\m(\varnothing)=0$, $\m(A)\leq \m(B)$ for all $A,B$ subsets of $X$ with $A\subset B$, and for all countable sequences $(A_n)_{n\in\N}$ of subsets of $X$, 
$$\m\left(\bigcup_{n=1}^{+\infty}A_n\right)\leq \sum_{n=1}^{+\infty}\m(A_n).$$

Any metric space $(X,d)$ can be endowed with a one-parameter family of natural measures: for all $\alpha>0$, we define the $\alpha$-dimensional Hausdorff measure $\HH^\alpha_d$ (or just $\HH^\alpha$ when the distance is implicit) as follows: for all $A\subset X$, 
$$\HH^\alpha_d(A)=\underset{\delta >0}{\lim}\ \inf\left\{\sum_{n=1}^{+\infty}\big(\text{diam}_d (A_n)\big)^{\alpha},\ A\subset \bigcup_{n=1}^{+\infty}A_n,\ \text{diam}_d(A_n)\leq \delta\right\}.$$

\begin{definition}[Ahlfors regularity]\label{def:ahlfors}Let $\m$ be a measure on a complete metric space $(X,d)$, and $\alpha>0$. We say that the metric measure space $(X,d,\m)$ is Ahlfors regular of dimension $\alpha$ (or Ahlfors $\alpha$-regular) if there exists a constant $K>0$ such that for all $x\in X$ and $r\in(0,\textnormal{diam}(X)]$, 
$$K^{-1}r^\alpha\leq \m\big(B(x,r)\big)\leq Kr^\alpha.$$  
\end{definition}

In Definition \ref{def:ahlfors} and later on we follow the convention of \cite{david} where each ball $B(x,r)$ is implicitly assumed to have finite radius even if the range of radii would permit $r = \infty$.
The following well-known lemma (see \cite{david}, Lemma $1.2$) allows us to talk about Ahlfors regularity on a metric space $(X,d)$.

\begin{lemma}\label{sufficient_condition} If $(X,d,\m)$ is Ahlfors regular of dimension $\alpha$, then so is $(X,d,\HH^\alpha)$, and there exists a constant $K>0$ such that for all Borel sets $B\subset X$, $K^{-1}\m(B)\leq \HH^\alpha(B)\leq K\m(B)$.
\end{lemma}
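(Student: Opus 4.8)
The plan is to first establish the two-sided comparison $K^{-1}\m(B)\leq \HH^\alpha(B)\leq K\m(B)$ for every Borel set $B$, and then to deduce the Ahlfors regularity of $(X,d,\HH^\alpha)$ by simply specializing the comparison to balls and invoking the regularity of $\m$. Throughout I will use that an Ahlfors $\alpha$-regular measure is doubling (since $\m(B(x,2r))\leq K2^\alpha r^\alpha\leq K^2 2^\alpha\m(B(x,r))$), so that $X$ is separable, and that the basic $5r$-covering lemma is available for families of balls of bounded radii in the metric space $X$.

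For the lower bound $\m(B)\leq K\HH^\alpha(B)$, I would first observe that upper Ahlfors regularity forces $\m(A)\leq K(\operatorname{diam}A)^\alpha$ for every nonempty set $A$: choosing $y\in A$ we have $A\subset B(y,\operatorname{diam}A)$, so $\m(A)\leq\m(B(y,\operatorname{diam}A))\leq K(\operatorname{diam}A)^\alpha$ when $\operatorname{diam}A>0$, while $\operatorname{diam}A=0$ forces $\m(A)=0$ by letting the radius tend to $0$. Then for any countable cover $B\subset\bigcup_n A_n$ with $\operatorname{diam}A_n\leq\delta$ (all $A_n$ nonempty without loss of generality), subadditivity gives $\m(B)\leq\sum_n\m(A_n)\leq K\sum_n(\operatorname{diam}A_n)^\alpha$; taking the infimum over such covers and letting $\delta\to 0$ yields $\m(B)\leq K\HH^\alpha(B)$.

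For the upper bound $\HH^\alpha(B)\leq K\m(B)$ I would argue in two steps. First, for an open set $U$ and $\delta>0$, apply the $5r$-covering lemma to the family of balls $B(x,r)$ with $x\in U$, $0<r<\delta/10$ and $B(x,5r)\subset U$ (this family covers $U$ since $U$ is open): one extracts a countable disjoint subfamily $\{B(x_i,r_i)\}$ with $U\subset\bigcup_i B(x_i,5r_i)$. Using $\operatorname{diam}B(x_i,5r_i)\leq 10r_i<\delta$, the lower Ahlfors bound $r_i^\alpha\leq K\m(B(x_i,r_i))$, and the disjointness of the $B(x_i,r_i)\subset U$, one gets $\HH^\alpha_\delta(U)\leq\sum_i(10r_i)^\alpha\leq 10^\alpha K\sum_i\m(B(x_i,r_i))\leq 10^\alpha K\m(U)$, hence $\HH^\alpha(U)\leq 10^\alpha K\m(U)$. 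Second, for a Borel set $B$ with $\m(B)<\infty$ (the case $\m(B)=\infty$ being trivial), I would use the outer regularity of $\m$ to pick an open $U\supset B$ with $\m(U)\leq\m(B)+\varepsilon$, so that $\HH^\alpha(B)\leq\HH^\alpha(U)\leq 10^\alpha K(\m(B)+\varepsilon)$, and let $\varepsilon\to 0$. Combining the two inequalities with the Ahlfors regularity of $\m$ then gives, for $x\in X$ and $r\in(0,\operatorname{diam}(X)]$, the bounds $\HH^\alpha(B(x,r))\leq K\m(B(x,r))\leq K^2 r^\alpha$ and $\HH^\alpha(B(x,r))\geq K^{-1}\m(B(x,r))\geq K^{-2}r^\alpha$, so $(X,d,\HH^\alpha)$ is Ahlfors $\alpha$-regular.

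I expect the only delicate point to be the passage from open sets to Borel sets in the upper bound, namely the outer regularity of $\m$. This rests on $\m$ being a Borel measure that is finite on balls on the separable (indeed complete) space $X$; in the unbounded case one only has $\sigma$-finiteness of $\m$, so a little care is needed, but this is a standard regularity fact for Borel measures on metric spaces. Everything else is a routine covering computation.
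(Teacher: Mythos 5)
The paper itself gives no proof of this lemma: it is imported from David and Semmes (\cite{david}, Lemma 1.2), so there is no internal argument to compare against. Your proof is the standard one and it is correct: upper regularity yields $\m(A)\leq K(\mathrm{diam}\,A)^\alpha$ for every set, hence $\m\leq K\HH^\alpha$ by subadditivity over Hausdorff covers; the $5r$-covering lemma applied inside an open set $U$, together with lower regularity and disjointness, yields $\HH^\alpha(U)\leq 10^\alpha K\,\m(U)$; and outer regularity of $\m$ transfers this bound to Borel sets, after which Ahlfors regularity of $\HH^\alpha$ follows by specializing to balls. Two small points deserve explicit care, though neither is a genuine gap. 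First, with open balls a set $A$ need not be contained in $B(y,\mathrm{diam}\,A)$ (take two points realizing the diameter), and the radius used must respect the convention $r\in(0,\mathrm{diam}\,X]$; using radius $(1+\varepsilon)\mathrm{diam}\,A$ or a closed ball, and covering $X$ by finitely many small balls when $\mathrm{diam}\,A$ is comparable to $\mathrm{diam}\,X$, fixes this at the cost of a constant. Second, the outer regularity you invoke should be justified: the restriction of $\m$ to Borel sets is a Borel measure on a metric space which is $\sigma$-finite through open balls of finite measure (finiteness on balls comes from Ahlfors regularity together with total boundedness of bounded sets), and for such measures open outer regularity of Borel sets is classical. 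With these remarks your argument is complete and coincides with the covering argument of the cited reference.
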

It is well-known that closed and bounded sets of an Ahlfors regular space are compact, because closed subsets are totally bounded, and Ahlfors regular spaces are assumed to be complete.

Recall that a $C$-biLipschitz map $f:(X,d) \to (Y,\rho)$ between two metric spaces is a map $f:X\to Y$ such that for all $x,y\in X$, 
$$C^{-1}d(x,y)\leq \rho\big(f(x),f(y)\big)\leq Cd(x,y).$$

\begin{definition}[BPI space]
 Let $(X,d)$ be an Ahlfors $\alpha$-regular metric space. $(X,d)$ is a BPI space of dimension $\alpha$ if there exist $C,\theta >0$ such that for all $x,y\in X$ and all $r,t \in (0,\textnormal{diam}_dX]$, there exist a closed subset $A\subset B(x,r)$ with $\HH^\alpha(A)\geq \theta r^\alpha$ and a $C$-biLipschitz map $f:(A,r^{-1}d)\to (B(y,t),t^{-1}d)$.
\end{definition} 

We next define an equivalence relation for BPI spaces.

\begin{definition}[BPI equivalence] Two BPI spaces $(X,d)$ and $(Y,\rho)$ of the same dimension $\alpha$ are BPI equivalent if there exist $C,\theta >0$ such that for all $x\in X, y\in Y$ and all $r\in (0,\textnormal{diam}_d(X)]$, $t\in (0,\textnormal{diam}_\rho(Y)]$, there exist a closed subset $A\subset B_d(x,r)$ with $\HH^\alpha_d(A)\geq \theta r^\alpha$ and a $C$-biLipschitz map $f:(A,r^{-1}d)\to \big(B_\rho(y,t),t^{-1}\rho\big)$.
\end{definition}

This is an equivalent relation (see \cite{david}, Chapter $7$). The following definition allows us to compare BPI spaces of the same dimension that are not BPI equivalent. 

\begin{definition}[Looking down] Let $(X,d)$ and $(Y,\rho)$ be two BPI spaces of the same dimension $\alpha$. We say that $X$ looks down on $Y$ if there exist a closed subset $A\subset X$ and a Lipschitz map $g:A\to Y$ such that $\HH^\alpha_\rho\big(g(A)\big)>0$. 
\end{definition}

This is a partial order on the set of equivalence classes of BPI spaces with the equivalence relation "looking down equivalence" (two BPI spaces are \textit{looking down equivalent} if each looks down on the other, see \cite{david}, Chapter $11$). We propose here a slightly different definition of minimal for looking down from the one given in \cite{david} by David and Semmes. 
\begin{definition}[Minimal for looking down]\label{minimal} A BPI space $X$ is minimal for looking down if for any BPI space $Y$ such that $X$ looks down on $Y$, then $Y$ looks down on $X$.
\end{definition}
The original definition of David and Semmes says that a BPI space $X$ is minimal for looking down if for any BPI space Y such that $X$ looks down on $Y$, then $X$ and $Y$ are BPI equivalent. Definition \ref{minimal} is thus weaker, but more natural, since with this definition, a BPI space that is minimal for looking down is a BPI space minimal for the partial order "looking down". 
\\

Next, following Definition $3.1.12$ in \cite{burago}, we define the notion of quotient semi-distance, which is useful in the shortening technique used in Section \ref{new semi-distance}. Given an equivalence relation $\RR$ on a metric space $(X,d)$ we can construct the \textit{quotient semi-distance} $d_\mathcal{R}$ defined on $X$ by
$$d_\mathcal{R}(x,y)=\inf\Big\{\sum_{k=0}^nd(x_k,y_k),\ x\RR x_0,\ y_k\RR x_{k+1},\ y_n\RR y\Big\}.$$
The quotient semi-distance $d_\RR$ is a \textit{semi-distance}, that is $d_\RR$ is nonnegative, symmetric, verifies the triangle inequality, is zero on the diagonal of $X\times X$ but can be zero also outside the diagonal. 

A set $(x_0,y_0,\dots ,x_n,y_n)$ such that $x\mathcal{R}x_0,\ y_k\mathcal{R}x_{k+1}$ and $y_n\mathcal{R}y$ is called an \textit{itinerary} between $x$ and $y$. That is, one is allowed to take a shortcut by teleporting itself between $x$ and $x_0$, between $y_k$ and $x_{k+1}$ for all $k$ and between $y_n$ and $y$. An itinerary $(x_0,y_0,\dots, x_n,y_n)$ is \textit{shorter} than another itinerary $(x_0',y_0',\dots, x_n',y_n')$ if 
$$\sum_{k=0}^nd(x_k,y_k)\leq \sum_{k=0}^nd(x_k',y_k').$$

The next lemma gives a way to construct Lipschitz maps and similitudes for quotient semi-distances.

\begin{lemma}\label{lipschitz_map} Let $(X,d)$, $(Y,\rho)$ be two metric spaces, $\RR_X$ an equivalence relation on $X$ and $\RR_Y$ an equivalence relation on $Y$. Suppose that there exist a map $f:X\rightarrow Y$ and $\lambda >0$ such that for all $x,y\in X$, $\rho\big(f(x),f(y)\big)=\lambda d(x,y)$ and $x\RR_X y\Rightarrow f(x)\RR_Y f(y)$. Then for all $x,y\in X$, 
$$\rho_{\RR_Y}\big(f(x),f(y)\big)\leq\lambda d_{\RR_X}(x,y).$$ 
Moreover, if $f$ is bijective and $x\RR_X y\Leftrightarrow f(x)\RR_Y f(y)$, then $\rho_{\RR_Y}\big(f(x),f(y)\big)=\lambda d_{\RR_X}(x,y)$.
\end{lemma}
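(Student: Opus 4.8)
The plan is to show that $f$ carries itineraries in $X$ to itineraries in $Y$ of length scaled exactly by $\lambda$, and then to pass to infima; the converse bound in the ``moreover'' part will follow by applying the first inequality to $f^{-1}$.

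First I would fix $x,y\in X$ and an arbitrary itinerary $(x_0,y_0,\dots,x_n,y_n)$ between $x$ and $y$, so that $x\RR_X x_0$, $y_k\RR_X x_{k+1}$ for $0\le k\le n-1$, and $y_n\RR_X y$. Applying $f$ and using the hypothesis $x\RR_X y\Rightarrow f(x)\RR_Y f(y)$, one gets $f(x)\RR_Y f(x_0)$, $f(y_k)\RR_Y f(x_{k+1})$ for all $k$, and $f(y_n)\RR_Y f(y)$; hence $(f(x_0),f(y_0),\dots,f(x_n),f(y_n))$ is an itinerary between $f(x)$ and $f(y)$ in $Y$. By the definition of $\rho_{\RR_Y}$ together with the relation $\rho\big(f(a),f(b)\big)=\lambda d(a,b)$,
$$\rho_{\RR_Y}\big(f(x),f(y)\big)\leq \sum_{k=0}^n\rho\big(f(x_k),f(y_k)\big)=\lambda\sum_{k=0}^n d(x_k,y_k).$$
Taking the infimum over all itineraries between $x$ and $y$ then yields $\rho_{\RR_Y}\big(f(x),f(y)\big)\leq\lambda\, d_{\RR_X}(x,y)$.

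For the ``moreover'' part I would apply the inequality just obtained to the map $f^{-1}\colon Y\to X$, which is well defined since $f$ is bijective. It is a similitude of ratio $\lambda^{-1}$, because $d\big(f^{-1}(u),f^{-1}(v)\big)=\lambda^{-1}\rho(u,v)$ for all $u,v\in Y$, and it satisfies $u\RR_Y v\Rightarrow f^{-1}(u)\RR_X f^{-1}(v)$ thanks to the equivalence $x\RR_X y\Leftrightarrow f(x)\RR_Y f(y)$. Hence $d_{\RR_X}\big(f^{-1}(u),f^{-1}(v)\big)\leq\lambda^{-1}\rho_{\RR_Y}(u,v)$; taking $u=f(x)$, $v=f(y)$ and rearranging gives $\lambda\, d_{\RR_X}(x,y)\leq\rho_{\RR_Y}\big(f(x),f(y)\big)$, which combined with the first inequality gives the desired equality. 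There is no real obstacle here: the only points that need care are checking that the image under $f$ of an itinerary is again an itinerary (this uses only the forward implication on the equivalence relations) and, for the reverse bound, verifying that $f^{-1}$ inherits all the hypotheses of the first part.
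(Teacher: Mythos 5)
Your argument is correct and is essentially the paper's proof: map each itinerary through $f$ using the implication on the equivalence relations, use the similitude to scale the length by $\lambda$, pass to the infimum (the paper phrases this with an $\varepsilon$-optimal itinerary, which is the same thing), and obtain the reverse inequality by applying the first part to $f^{-1}$. No gaps.
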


\begin{proof} Let $\varepsilon >0$ and $(x_0,y_0,\dots,x_n,y_n)$ be an itinerary from $x$ to $y$ such that 
$$\lambda^{-1}\sum_{k=0}^n \rho\big(f(x_k),f(y_k)\big)=\sum_{k=0}^nd(x_k,y_k)\leq d_\mathcal{R}(x,y)+\varepsilon.$$
Since, $\big(f(x_0),f(y_0),\dots,f(x_n),f(y_n)\big)$ is an itinerary from $f(x)$ to $f(y)$, then $\rho_{\RR_Y}\big(f(x),f(y)\big) \leq \lambda (d_{\RR_X}(x,y)+\varepsilon)$, and the result holds for $\varepsilon \to 0$. 
If $f$ is bijective and $x\RR_X y\Leftrightarrow f(x)\RR_Y f(y)$, it is sufficient to apply the foregoing to $f^{-1}$.\end{proof}

If $d$ is a semi-distance on a space $X$, we denote by $(X/d, \overline{d})$ the quotient metric space, which is the space of all equivalence classes for the relation $x\sim y\Leftrightarrow d(x,y)=0$. The natural distance $\overline{d}$ on $X/d$ is defined by $\overline{d}\big(\pi(x),\pi(y)\big)=d(x,y)$, where $\pi : X\to X/d$ is the canonical projection. One can easily check that $\overline{d}$ is well defined and is a distance on $X/d$.\\

Finally, we recall some basic facts about the Hausdorff distance. If $(X,d)$ is a metric space, we denote by $\CC(X)$ the set of all compact subsets of $X$. The $\varepsilon$-neighborhood of a set $A\subset X$, denoted by $A^\varepsilon$, is $\left\{x\in X, d(x,A)<\varepsilon\right\}.$ On $\CC(X)$, we consider the Hausdorff distance $d_H$ defined for $A,B\in \CC(X)$ by 
$$d_H(A,B)=\inf\left\{\varepsilon >0, A\subset B^\varepsilon, B\subset A^\varepsilon\right\}.$$
The space $(\CC(X),d_H)$ is a metric space, which is compact if $X$ is compact (Blaschke Theorem). 
We sometimes write $K_n\overset{H}{\longrightarrow}K$ to say that $(K_n)_{n\in\N}$ converges to $K$ in the Hausdorff distance. 
Moreover, if $(X,d)$ is Ahlfors $\alpha$-regular, then the $\alpha$-dimensional Hausdorff measure is upper semi-continuous on $(\CC(X),d_H)$.

\begin{lemma}\label{semi_continuity}
Let $(X,d)$ be an Ahlfors regular metric space of dimension $\alpha$. Let $(K_n)_{n\in\N}$ be a sequence of compact sets that converges for $d_H$ to $K\in\CC(X)$. Then
$$\underset{n\to +\infty}{\lim\sup}\ \HH^\alpha(K_n)\leq \HH^\alpha(K).$$
\end{lemma}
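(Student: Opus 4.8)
The plan is to compare $\HH^\alpha(K_n)$ with the measure of a small open neighbourhood of $K$, using outer regularity of $\HH^\alpha$; I would avoid the more naive idea of pushing a near-optimal cover of $K$ forward to a cover of each $K_n$, since that only controls $\HH^\alpha_\delta(K_n)$ at a fixed scale $\delta$ and leaves an uncontrolled gap between $\HH^\alpha_\delta$ and $\HH^\alpha$. First I would dispose of the trivial case $\HH^\alpha(K)=+\infty$, and assume from now on that $\HH^\alpha(K)<+\infty$. Since $K$ is compact it is bounded, hence contained in some ball $B=B(x_0,R)$ which, by Ahlfors regularity together with Lemma~\ref{sufficient_condition}, can be chosen with $\HH^\alpha(B)<+\infty$. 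The restriction of $\HH^\alpha$ to $B$ is then a finite Borel measure on $X$, and every finite Borel measure on a metric space is outer regular: the family of Borel sets that can be squeezed, up to $\varepsilon$ of measure, between a closed subset from inside and an open superset from outside is a $\sigma$-algebra containing all closed sets. Applying this to $K$, for a given $\varepsilon>0$ we obtain a closed set $F$ and an open set $V$ of $X$ with $F\subset K\subset V$ and $\HH^\alpha\big(B\cap(V\setminus F)\big)<\varepsilon$; setting $U:=V\cap B$, which is open since $B$ is an open ball, gives $K\subset U$ and $\HH^\alpha(U)\le\HH^\alpha(F)+\HH^\alpha\big(B\cap(V\setminus F)\big)<\HH^\alpha(K)+\varepsilon$.

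Next I would exploit the Hausdorff convergence. The compact set $K$ and the closed set $X\setminus U$ are disjoint, so $2\sigma:=d(K,X\setminus U)>0$ (with the convention $d(K,\varnothing)=+\infty$). If $x\in K^\sigma$, choosing $y\in K$ with $d(x,y)<\sigma$ yields $d(x,z)\ge d(y,z)-d(x,y)\ge 2\sigma-\sigma>0$ for every $z\in X\setminus U$, so $x\in U$; hence $K^\sigma\subset U$. Since $K_n\overset{H}{\longrightarrow}K$ there is $N_0$ with $d_H(K_n,K)<\sigma$, and therefore $K_n\subset K^\sigma\subset U$, for all $n\ge N_0$. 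By monotonicity of $\HH^\alpha$ we conclude $\HH^\alpha(K_n)\le\HH^\alpha(U)<\HH^\alpha(K)+\varepsilon$ for all $n\ge N_0$, hence $\limsup_{n\to+\infty}\HH^\alpha(K_n)\le\HH^\alpha(K)+\varepsilon$; letting $\varepsilon\to 0$ completes the argument.

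I do not expect a genuine difficulty here: the one point needing care is the appeal to outer regularity of $\HH^\alpha$, which is why one restricts to a ball of finite measure (alternatively, $X$ is separable, $\sigma$-compact, and locally compact --- closed bounded subsets being compact --- so $\HH^\alpha$ is a Radon measure and hence outer regular); the remaining steps are routine facts about the Hausdorff metric and monotonicity of Hausdorff measure.
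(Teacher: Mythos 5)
Your proof is correct, but it follows a genuinely different route from the paper's. The paper argues via integration: it sets $f_n=\mathds{1}_{K_n\setminus K}$, shows $f_n\to 0$ pointwise from the Hausdorff convergence, dominates all $f_n$ by $\mathds{1}_{\overline{K^\varepsilon}}$ (integrable because closed bounded sets in a complete Ahlfors regular space are compact and have finite $\HH^\alpha$-measure), and applies dominated convergence to get $\HH^\alpha(K_n\setminus K)\to 0$, whence $\HH^\alpha(K_n)\le\HH^\alpha(K_n\setminus K)+\HH^\alpha(K)$. You instead use pure measure-theoretic regularity: restrict $\HH^\alpha$ to a ball of finite measure containing $K$, invoke outer regularity of finite Borel measures on metric spaces to produce an open $U\supset K$ with $\HH^\alpha(U)<\HH^\alpha(K)+\varepsilon$, and observe that Hausdorff convergence forces $K_n\subset K^\sigma\subset U$ for large $n$, so monotonicity finishes the argument. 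Both proofs use Ahlfors regularity only to guarantee that $\HH^\alpha$ is finite on bounded sets; your version avoids the dominated convergence theorem and even the completeness/compactness input (it works for any Borel measure finite on bounded sets), and is in that sense more elementary and more general, while the paper's version yields the slightly stronger conclusion $\HH^\alpha(K_n\setminus K)\to 0$ as a by-product. Your opening remark about why one cannot simply push a near-optimal cover of $K$ to the $K_n$ (it only controls $\HH^\alpha_\delta$ at a fixed scale) is also a sound observation.
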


\begin{proof}

For all $n\in\N$, we set $f_n=\mathds{1}_{K_n\setminus K}$. The convergence of $K_n$ to $K$ in the Hausdorff distance implies that $(f_n)$ converges pointwise to $0$. In fact, if $x\in K$, then for all $n, f_n(x)=0$. If $x\notin K$, then there exists $r>0$ such that $B(x,r)\subset X\setminus K$. Fix $n_0\in\N$ such that for all $n\geq n_0, d_H(K_n,K)<r/2$. Then for all $n\geq n_0, x\notin K_n$, thus $f_n(x)=0$. Moreover, since $(K_n)$ converges, it is a bounded sequence, so there exists $\varepsilon >0$ such that for all $n\in\N$, $K_n\subset K^\varepsilon\subset\overline{K^\varepsilon}$, which is a compact set since closed and bounded subsets of an Ahlfors regular space are compact. Then for all $n\in\N, f_n\leq \mathds{1}_{\overline{K^\varepsilon}}$ and the latter function is integrable with respect to $\HH^\alpha$, since compact subsets of an Ahlfors $\alpha$-regular space have finite $\alpha$-dimensional Hausdorff measure.
By the dominated convergence theorem,
$$\HH^\alpha(K_n\setminus K)=\int_X f_n\dd\HH^\alpha \underset{n\to +\infty}{\longrightarrow}0.$$
We conclude by writing 
$\HH^\alpha(K_n)\leq \HH^\alpha(K_n\setminus K)+\HH^\alpha(K)$.
\end{proof}

We will need the following proposition, whose proof can be found in \cite{ambrosio} (Proposition $4.4.14.$)
\begin{proposition}\label{kuratowski} Let $(K_n)_{n\in\N}, K$ be compact sets of a metric space $(X,d)$. If $K_n\overset{H}{\underset{n\to +\infty}{\longrightarrow}}K$ then\begin{enumerate}
\item for all $x\in K$, there exists a sequence $(x_n)_{n\in\N}$ such that $x_n\in K_n$, and $d(x_n,x)\underset{n\to +\infty}{\longrightarrow}0$.
\item for all $x$ such that $x=\underset{k\to +\infty}{\lim}x_{n_k}$ where $(x_{n_k})_{k\in\N}$ is a subsequence of a sequence $(x_n)_{n\in\N}$ such that $x_n\in K_n$, then $x\in K$. 
\end{enumerate}
Moreover the converse is true if $X$ is compact.
\end{proposition}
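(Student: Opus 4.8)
The plan is to establish conditions (1) and (2) directly from the Hausdorff convergence, and then to prove the converse, under the extra hypothesis that $X$ is compact, by a contradiction argument. Throughout I would use the identity $d_H(A,B)=\max\{\sup_{a\in A}d(a,B),\ \sup_{b\in B}d(b,A)\}$ and write $\eta_n=d_H(K_n,K)$, so that $\eta_n\to 0$.

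For (1), I would fix $x\in K$ and note that $d(x,K_n)\le\sup_{y\in K}d(y,K_n)\le \eta_n\to 0$; since $K_n$ is compact the distance $d(x,K_n)$ is attained at some $x_n\in K_n$, and then $d(x,x_n)\to 0$, which is the desired sequence. For (2), given $x=\lim_k x_{n_k}$ with $x_{n_k}\in K_{n_k}$, I would bound $d(x_{n_k},K)\le\sup_{a\in K_{n_k}}d(a,K)\le\eta_{n_k}\to 0$, so that $d(x,K)\le d(x,x_{n_k})+d(x_{n_k},K)\to 0$; as $K$ is closed this forces $x\in K$. Neither of these two arguments uses compactness of $X$.

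For the converse I would assume $X$ compact together with (1) and (2), and suppose for contradiction that $\eta_n\not\to 0$. Then some subsequence satisfies $\eta_{n_k}\ge\varepsilon_0>0$, and by the max-identity one of the two suprema is $\ge\varepsilon_0$ for infinitely many $k$; passing to a further subsequence I would split into two cases. In the first, $\sup_{a\in K_{n_k}}d(a,K)\ge\varepsilon_0$, and since $d(\cdot,K)$ is continuous and $K_{n_k}$ compact this supremum is attained at some $a_k\in K_{n_k}$. Here is the one place I would invoke compactness of $X$: it lets me extract a convergent subsequence $a_{k_j}\to a$. Completing $(a_{k_j})$ to a full sequence $(y_n)$ with $y_n\in K_n$ (choosing arbitrary points at the remaining indices) and applying (2) gives $a\in K$, which contradicts $d(a,K)=\lim_j d(a_{k_j},K)\ge\varepsilon_0$ by continuity of $d(\cdot,K)$. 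In the second case, $\sup_{b\in K}d(b,K_{n_k})\ge\varepsilon_0$ is attained at some $b_k\in K$; compactness of $K$ gives $b_{k_j}\to b\in K$, and applying (1) to $b$ furnishes $z_n\in K_n$ with $z_n\to b$, so $d(b,K_{n_{k_j}})\le d(b,z_{n_{k_j}})\to 0$. But $d(b,K_{n_{k_j}})\ge\varepsilon_0-d(b_{k_j},b)\to\varepsilon_0$, again a contradiction.

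The main obstacle is the converse, and more precisely pinning down exactly where compactness of $X$ is indispensable. The example $K=\{0\}$, $K_n=\{0,n\}$ in $\R$ satisfies (1) and (2) but does not converge in Hausdorff distance, so compactness must be used solely to prevent $K_n$ from producing points that drift far from $K$; this is exactly the role it plays in the first case above, where it guarantees that $(a_k)$ has a convergent subsequence before condition (2) can be applied.
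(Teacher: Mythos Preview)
Your proof is correct. The paper does not actually prove this proposition; it simply refers the reader to \cite{ambrosio} (Proposition~4.4.14), so your argument constitutes an independent, self-contained verification. Your treatment of the forward direction is the standard one, and your contradiction argument for the converse correctly isolates the single use of compactness of $X$ (extracting a convergent subsequence from the points $a_k\in K_{n_k}$ realizing $\sup_{a\in K_{n_k}}d(a,K)$); the illustrative example $K=\{0\}$, $K_n=\{0,n\}$ in $\R$ is a nice touch showing this hypothesis cannot be dropped. One minor implicit assumption: in completing $(a_{k_j})$ to a full sequence $(y_n)$ with $y_n\in K_n$ you need each $K_n$ to be nonempty, but this is harmless since Hausdorff distance is only considered between nonempty compact sets.
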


\section{Product of BPI spaces}\label{product_BPI_section}
We will prove the following

\begin{theorem}\label{product_BPI} Let $(X,d)$ and $(Y,\rho)$ be two BPI spaces of dimension $\alpha$ and $\beta$. If $X$ and $Y$ are both bounded or both unbounded, then the product $X\times Y$ endowed with a product distance is a BPI space of dimension $\alpha+\beta$. 
\end{theorem}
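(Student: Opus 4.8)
The plan is to verify the two defining properties of a BPI space for the product $X \times Y$: Ahlfors regularity of dimension $\alpha+\beta$, and the big-pieces/biLipschitz self-similarity condition. Throughout, I fix a product distance on $X \times Y$ — say $d_{X\times Y}((x,y),(x',y')) = d(x,x') + \rho(y,y')$ or the $\ell^\infty$ version; any two such are biLipschitz equivalent, and BPI is preserved under global biLipschitz maps, so the choice is immaterial.

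First I would establish Ahlfors $(\alpha+\beta)$-regularity. The key observation is that a ball $B_{d_{X\times Y}}((x,y),r)$ is comparable (sandwiched between, using the two standard product metrics) to a product $B_d(x,r) \times B_\rho(y,r)$. Using Lemma \ref{sufficient_condition} I may replace $\HH^\alpha$ and $\HH^\beta$ by the given Ahlfors-regular measures $\m_X$ and $\m_Y$; the product measure $\m_X \otimes \m_Y$ then satisfies $(\m_X\otimes\m_Y)(B_d(x,r)\times B_\rho(y,r)) = \m_X(B_d(x,r))\,\m_Y(B_\rho(y,r))$, which is pinched between $K^{-2}r^{\alpha+\beta}$ and $K^2 r^{\alpha+\beta}$. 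One subtlety: the range of radii. If both spaces are bounded, then for $r \in (0,\operatorname{diam}(X\times Y)]$ we have $r \le \operatorname{diam}(X) + \operatorname{diam}(Y)$, and for $r$ larger than one of the factor diameters the corresponding factor ball is the whole space, whose measure is a fixed positive constant comparable to $(\operatorname{diam})^{\text{exponent}}$ — so the estimate still holds after adjusting $K$. If both are unbounded, $r$ ranges over all of $(0,\infty)$ and no such issue arises. This is precisely where the hypothesis "both bounded or both unbounded" is used: a mixed product would have, at large scales, one factor saturated and the other not, destroying the clean power law. Finally I invoke Lemma \ref{sufficient_condition} again to transfer regularity back to $\HH^{\alpha+\beta}$, so that the BPI definition (phrased with $\HH^{\alpha+\beta}$) applies.

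Next, the big-pieces condition. Given source points $(x_1,y_1),(x_2,y_2)$ and scales $r,t$, I want a large closed subset of $B_{d_{X\times Y}}((x_1,y_1),r)$ that is biLipschitz (after rescaling) onto a subset of $B_{d_{X\times Y}}((x_2,y_2),t)$. The natural move is to build this as a product of the pieces coming from each factor. Apply the BPI property of $X$ with scales $r,t$ (and points $x_1,x_2$) to get a closed $A \subset B_d(x_1,r)$ with $\HH^\alpha(A) \ge \theta_X r^\alpha$ and a $C_X$-biLipschitz $f:(A,r^{-1}d)\to(B_\rho(x_2,t),t^{-1}d)$; likewise get $B \subset B_\rho(y_1,r)$, $\HH^\beta(B)\ge \theta_Y r^\beta$, and $g:(B,r^{-1}\rho)\to(B_\rho(y_2,t),t^{-1}\rho)$ $C_Y$-biLipschitz. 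Then $A\times B$ is closed in $B_{d_{X\times Y}}((x_1,y_1),r)$, the product map $f\times g$ is $\max(C_X,C_Y)$-biLipschitz (for the $\ell^\infty$-type product distance; a fixed constant for any product distance) from $(A\times B, r^{-1}d_{X\times Y})$ into $B_{d_{X\times Y}}((x_2,y_2),t)$ rescaled by $t^{-1}$, and by the product structure of Hausdorff measures on Ahlfors-regular spaces, $\HH^{\alpha+\beta}(A\times B) \gtrsim \HH^\alpha(A)\,\HH^\beta(B) \ge \theta_X\theta_Y\, r^{\alpha+\beta}$, which is the required lower bound $\theta\, r^{\alpha+\beta}$ with $\theta \asymp \theta_X\theta_Y$. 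The constants $C,\theta$ are uniform because the factor constants are.

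**The main obstacle** I anticipate is the interplay between Hausdorff measures and products: in general $\HH^{\alpha+\beta}$ on a product metric space is only comparable to, not equal to, $\HH^\alpha \otimes \HH^\beta$, and even the inequality $\HH^{\alpha+\beta}(A\times B) \ge c\,\HH^\alpha(A)\HH^\beta(B)$ requires care (the reverse inequality $\HH^{\alpha+\beta}\le \HH^\alpha\cdot\HH^\beta$ up to constants is the easier half, via covering by products of optimal covers). The clean way around this is to never work with $\HH^{\alpha+\beta}$ directly for the measure estimates: use the product measure $\m_X\otimes\m_Y$, which multiplies exactly, throughout the argument, and only at the very end convert to $\HH^{\alpha+\beta}$ via Lemma \ref{sufficient_condition} — which tells us $\HH^{\alpha+\beta}$ and $\m_X\otimes\m_Y$ are comparable precisely because $X\times Y$ is already known to be Ahlfors $(\alpha+\beta)$-regular with respect to $\m_X\otimes\m_Y$. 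A secondary, more bookkeeping-level point is checking that closed subsets stay closed and balls behave well under the chosen product distance; this is routine once a specific product distance is fixed.
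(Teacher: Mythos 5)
Your proposal is correct and follows essentially the same route as the paper: prove Ahlfors $(\alpha+\beta)$-regularity by working with the product measure (which multiplies exactly on products of Borel sets) rather than $\HH^{\alpha+\beta}$ itself, handle the radius range using the both-bounded/both-unbounded hypothesis, transfer back to $\HH^{\alpha+\beta}$ via Lemma \ref{sufficient_condition}, and then take products of the factor big pieces and of the factor biLipschitz maps. The subtlety you flag about $\HH^{\alpha+\beta}$ versus the product of Hausdorff measures is resolved in the paper exactly as you propose, by comparing $\HH^{\alpha+\beta}$ with the product measure only after regularity of the product is established.
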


By a \textit{product distance} on the product of two metric spaces $(X,d), (Y,\rho)$ we mean a distance denoted by $\|\cdot\|(d,\rho)$ and defined for all $(x,y),(x',y')\in X\times Y$ by
$$\|\cdot\|(d,\rho)\big((x,y),(x',y')\big)=\|\big(d(x,x'),\rho(y,y')\big)\|,$$ where $\|\cdot\|$ is a norm on $\R^2$. By the equivalence of norms in finite-dimensional vector spaces, all the product distances are biLipschitz equivalent, and since being a BPI space in invariant by biLipschitz maps, it is sufficient to prove Theorem \ref{product_BPI} for one specific product distance.

In this section, we fix two BPI spaces $(X,d)$ and $(Y,\rho)$. Let $\alpha$ denote the dimension of $X$ and $\beta$ the dimension of $Y$. Let $d_\infty$ be the product distance $\|\cdot\|_{\infty}(d,\rho)$, where $\|\cdot\|_{\infty}$ is the $\sup$ norm on $\R^2$.
We remark that if $(x,y)\in X\times Y$, and $r\in (0,\max\{\text{diam}_d X, \text{diam}_\rho Y\}]$, then 
\begin{equation}\label{product_balls}
B_{d_\infty}\big((x,y),r\big)=B_d(x,r)\times B_\rho(y,r).
\end{equation}
First of all, we need to prove that $(X\times Y,d_\infty)$ is Ahlfors regular of dimension $\alpha+\beta$. To do so, the Hausdorff measure $\HH^{\alpha+\beta}_{d_\infty}$ seems to be natural. The problem is that this measure behaves badly with product sets. We define another measure $\m$ on the product $X\times Y$ that is better for measuring product sets. We denote by $\BB_X$ and $\BB_Y$ the Borel $\sigma$-algebra of $X$ and $Y$. For all $A\subset X\times Y$, let
$$\m(A)=\inf\Big\{\sum_{k=1}^{+\infty}\HH^\alpha_d(A_k)\HH^\beta_\rho(B_k),\ A\subset \bigcup_{k=1}^{+\infty}A_k\times B_k,\ A_k\in \BB_X,\ B_k\in \BB_Y\Big\}.$$
In general it is not true that $\HH^{\alpha+\beta}=\m$.

\begin{proposition}\label{product_measure} $\m$ is an outer measure on $X\times Y$ such that for all $A\in \BB_X, B\in \BB_Y$, 
$$\m(A\times B)=\HH^\alpha_d(A)\HH^\beta_\rho(B).$$
Moreover, Borel sets of $(X\times Y,d_\infty)$ are $\m$-measurable.
\end{proposition}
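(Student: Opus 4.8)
The plan is to verify the outer-measure axioms for $\m$ directly from its definition, and then to compute $\m$ on product sets by a two-sided estimate. First I would check that $\m$ is an outer measure: $\m(\varnothing)=0$ is immediate (cover by $\varnothing\times\varnothing$), monotonicity is immediate since any cover of $B$ by measurable rectangles is a cover of $A\subset B$, and countable subadditivity follows by the standard diagonal argument — given $\varepsilon$-efficient rectangle covers of each $A_n$, their union is a rectangle cover of $\bigcup_n A_n$ whose total weighted sum is within $\varepsilon$ of $\sum_n\m(A_n)$. None of this uses anything beyond the fact that $\HH^\alpha_d$ and $\HH^\beta_\rho$ are (outer) measures.

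For the identity $\m(A\times B)=\HH^\alpha_d(A)\HH^\beta_\rho(B)$ with $A\in\BB_X$, $B\in\BB_Y$, the inequality $\m(A\times B)\leq \HH^\alpha_d(A)\HH^\beta_\rho(B)$ is trivial, taking the single rectangle $A\times B$ in the cover. The reverse inequality is the heart of the matter. Given any cover $A\times B\subset\bigcup_k A_k\times B_k$ by Borel rectangles, I want $\sum_k\HH^\alpha_d(A_k)\HH^\beta_\rho(B_k)\geq\HH^\alpha_d(A)\HH^\beta_\rho(B)$. The natural tool is Fubini/Tonelli: for each fixed $x\in A$, the slices $\{B_k : x\in A_k\}$ cover $B$, so $\HH^\beta_\rho(B)\leq\sum_{k:\,x\in A_k}\HH^\beta_\rho(B_k)=\sum_k\mathds{1}_{A_k}(x)\HH^\beta_\rho(B_k)$. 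Integrating this inequality in $x$ over $A$ against $\HH^\alpha_d$ — using that $(X,d,\HH^\alpha_d)$ is Ahlfors regular, hence $\sigma$-finite, so Tonelli applies to the nonnegative Borel function $x\mapsto\sum_k\mathds{1}_{A_k}(x)\HH^\beta_\rho(B_k)$ — gives $\HH^\alpha_d(A)\HH^\beta_\rho(B)\leq\sum_k\HH^\alpha_d(A_k\cap A)\HH^\beta_\rho(B_k)\leq\sum_k\HH^\alpha_d(A_k)\HH^\beta_\rho(B_k)$. Taking the infimum over covers yields $\HH^\alpha_d(A)\HH^\beta_\rho(B)\leq\m(A\times B)$, and we are done. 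I expect this interchange of sum and integral to be the main point requiring care: one must make sure the sets $A_k$ may be taken Borel (they are, by hypothesis) so that $\mathds{1}_{A_k}$ is $\HH^\alpha_d$-measurable, and invoke $\sigma$-finiteness to license Tonelli; the Ahlfors regularity of $X$ supplies exactly this.

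Finally, for the $\m$-measurability of Borel sets of $(X\times Y,d_\infty)$: since the $\m$-measurable sets form a $\sigma$-algebra by Carathéodory's theorem, it suffices to show every open set is $\m$-measurable, and for this it is enough to show that measurable rectangles $A\times B$ with $A\in\BB_X$, $B\in\BB_Y$ are $\m$-measurable — because open subsets of $X\times Y$ (a separable metric space, being a product of Ahlfors regular hence separable spaces) are countable unions of open rectangles $U\times V$. To check the Carathéodory condition for $R=A\times B$, take an arbitrary test set $E\subset X\times Y$ and an $\varepsilon$-efficient rectangle cover $E\subset\bigcup_k A_k\times B_k$; split each rectangle using $A_k\times B_k = \big((A_k\cap A)\times(B_k\cap B)\big)\sqcup(\text{three pieces disjoint from }R)$, observe these four sub-rectangles are again Borel rectangles covering $E\cap R$ and $E\setminus R$ respectively, and use additivity of $\HH^\alpha_d$ on the disjoint Borel pieces $A_k\cap A$, $A_k\setminus A$ together with the like for $\HH^\beta_\rho$ to get $\m(E\cap R)+\m(E\setminus R)\leq\sum_k\HH^\alpha_d(A_k)\HH^\beta_\rho(B_k)\leq\m(E)+\varepsilon$. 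Letting $\varepsilon\to0$ finishes the proof.
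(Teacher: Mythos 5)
Your argument is correct, but it is worth noting that the paper does not prove this proposition at all: it simply cites a standard textbook result on product outer measures (Theorem 6.2 of the reference given there). What you have done is reconstruct, essentially verbatim, that standard proof. The outer-measure axioms and the upper bound $\m(A\times B)\leq\HH^\alpha_d(A)\HH^\beta_\rho(B)$ are as routine as you say; the lower bound via slicing --- for fixed $x\in A$ the sets $\{B_k:\ x\in A_k\}$ cover $B$, then integrate $\HH^\beta_\rho(B)\leq\sum_k\mathds{1}_{A_k}(x)\HH^\beta_\rho(B_k)$ over $A$ --- is the classical argument, and it only needs monotone convergence for series of nonnegative Borel functions (your appeal to $\sigma$-finiteness is harmless but not actually required there; where finiteness of $\HH^\alpha_d$, $\HH^\beta_\rho$ on bounded sets does quietly matter is in the degenerate case where one factor is null and the other has infinite measure, which Ahlfors regularity handles). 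Your Carath\'eodory verification for Borel rectangles, using that $B$ is $\HH^\beta_\rho$-measurable so that $\HH^\beta_\rho(B_k)=\HH^\beta_\rho(B_k\cap B)+\HH^\beta_\rho(B_k\setminus B)$, is sound, and the passage from rectangles to all Borel sets of $(X\times Y,d_\infty)$ via countable unions of open rectangles is legitimate because Ahlfors regular spaces are doubling, hence separable --- a point you correctly flag. So the only difference from the paper is that you supply a self-contained proof where the authors outsource it; your proof buys independence from the reference at the cost of a page of standard measure theory, and it contains no gaps.
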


\begin{proof} See \cite{real_analysis} (Theorem $6.2$)
\end{proof}

\begin{proof}[Proof of Theorem \ref{product_BPI}] In this proof, any constant that refers to properties of $X$ or $Y$ is denoted with either an $X$ or a $Y$ in the subscript. First we prove that $(X\times Y,d_\infty)$ is Ahlfors regular of dimension $\alpha+\beta$. 

Suppose that $X$ and $Y$ are both unbounded. For all $(x,y)\in X\times Y$ and all $r>0$, 
$$\m\Big(B_{d_\infty}\big((x,y),r\big)\Big)=\m\big(B_d(x,r)\times B_\rho(y,r)\big)=\HH^\alpha_d\big(B_d(x,r)\big)\HH^\beta_\rho\big(B_\rho(y,r)\big),$$
with \eqref{product_balls} and Proposition \ref{product_measure}, hence
\begin{equation}\label{estimation_balls}
(K_XK_Y)^{-1}r^{\alpha+\beta}\leq\m\Big(B_{d_\infty}\big((x,y),r\big)\Big)\leq K_XK_Yr^{\alpha+\beta}.
\end{equation}
By Lemma \ref{sufficient_condition}, $(X\times Y,d_\infty)$ is Ahlfors regular of dimension $\alpha +\beta$.

If $X$ and $Y$ are both bounded, then the estimate \eqref{estimation_balls} holds for all $(x,y)\in X\times Y$ and all $r\in (0,\min\{\text{diam}_d X, \text{diam}_\rho Y\} ]$. By modifying the Ahlfors regularity constants $K_X, K_Y$, it also holds for all $r\in (0,\max\{\text{diam}_d X, \text{diam}_\rho Y\} ]$, thus $(X\times Y,d_\infty)$ is Ahlfors $(\alpha +\beta)$-regular. 
\\

Fix now two points $(x,y), (x',y')\in X\times Y$ and two radii $r\in (0,\text{diam}_d(X)]$, $t\in (0,\text{diam}_\rho(Y)]$. Let $A_X\subset B_d(x,r),\ A_Y\subset B_\rho(y,r)$ be the two big pieces and $f_X:(A_X,r^{-1}d)\to (B_d(x',t),t^{-1}d)$, $f_Y:(A_Y,r^{-1}\rho)\to (B_\rho (y',t),t^{-1}\rho)$ the two $C_X, C_Y$-biLipschitz maps given by the definition of a BPI space. By Lemma \ref{sufficient_condition}, and Proposition \ref{product_measure} ($A_X$, $A_Y$ are Borel sets), there exists a constant $\theta>0$ such that $\HH^{\alpha+\beta}(A_X\times A_Y)\geq \theta r^{\alpha+\beta}$. Finally, the map 
$$f: (A_X\times A_Y,r^{-1}d_\infty)\to \big(B_{d_\infty}\big((x',y'),t\big),t^{-1}d_\infty\big)$$
defined by $f(x,y)=\big(f_X(x),f_Y(y)\big)$ is a $C$-biLipschitz map, for a constant $C$ depending only on $C_X$ and $C_Y$. 
\end{proof}

We can now prove that $(\R^N,d_\s)$ is a BPI space of dimension $\alpha=\sum s_k^{-1}$. 
\begin{proposition}\label{product_snowflakes}
The metric space $\displaystyle( \R^N,d_\s)$ is a BPI space. 
\end{proposition}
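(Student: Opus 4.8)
The plan is to build the BPI structure of $(\R^N,d_\s)$ out of the one-dimensional snowflaked pieces, using Theorem \ref{product_BPI} inductively. First I would observe that the metric space $(\R^N,d_\s)$ is the $\ell^1$-product of the $N$ factors $(\R,|\cdot|^{s_k})$, and that $\ell^1$ is a product distance in the sense defined above; by the biLipschitz invariance of the BPI property already noted, it suffices to treat this particular product distance. Since Theorem \ref{product_BPI} is stated for products of \emph{two} spaces, I would iterate: writing $\R^N = \R \times \R^{N-1}$, it is enough to know that each snowflaked line $(\R,|\cdot|^{s_k})$ is a BPI space (of dimension $1/s_k$), that the product distance on $\R^N$ is biLipschitz to the iterated one, and that the unboundedness hypothesis of Theorem \ref{product_BPI} is met at every stage — which it is, since every factor $(\R,|\cdot|^{s_k})$ is unbounded. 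The resulting dimension adds up: $\sum_{k=1}^N 1/s_k = \alpha$.

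So the main content reduces to the one-dimensional claim: $(\R,|\cdot|^s)$ is a BPI space of dimension $1/s$ for any $s\in(0,1]$. For Ahlfors regularity I would note $\HH^{1/s}_{|\cdot|^s} = \LL^1$ (up to a constant), since $\text{diam}_{|\cdot|^s}([a,b]) = (b-a)^s$, so $B_{|\cdot|^s}(x,r)$ is the Euclidean interval $(x-r^{1/s},x+r^{1/s})$ and has Hausdorff measure comparable to $r^{1/s}$. For the big-pieces condition, given any two points $x,x'\in\R$ and radii $r,t>0$, I would take $A = \overline{B_{|\cdot|^s}(x,r)} = [x-r^{1/s},x+r^{1/s}]$, which is all of the closed ball and hence has $\HH^{1/s}$-measure $\gtrsim r^{1/s}$, and use the affine map $\varphi(u) = x' + (t/r)^{1/s}(u-x)$, which sends $A$ onto $\overline{B_{|\cdot|^s}(x',t)}$. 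A direct computation shows $|\varphi(u)-\varphi(v)|^s = (t/r)|u-v|^s$, so $\varphi$ is an isometry — in particular $1$-biLipschitz — from $(A,r^{-1}|\cdot|^s)$ to $(\overline{B}(x',t),t^{-1}|\cdot|^s)$; passing to the open balls changes nothing essential. This gives the BPI constants $C=1$, $\theta$ depending only on the regularity constant.

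I would then assemble these: each $(\R,|\cdot|^{s_k})$ is BPI of dimension $1/s_k$, all unbounded, so by $N-1$ applications of Theorem \ref{product_BPI} the $\ell^\infty$-iterated product — hence, by biLipschitz invariance, $(\R^N,d_\s)$ itself — is a BPI space of dimension $\sum_k 1/s_k$. The only mild subtlety, and the step I would be most careful about, is bookkeeping with the \emph{rescaled} distances $r^{-1}d$ appearing in the definitions of BPI space and of looking down: Theorem \ref{product_BPI} as stated handles a single product distance, so when iterating one must check that the intermediate spaces $\R\times\R^{j}$ carry a product distance biLipschitz to the relevant sup-norm product, which is immediate from equivalence of norms on $\R^{j+1}$ but should be stated explicitly. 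Everything else is a routine verification.
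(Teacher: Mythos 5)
Your proposal is correct and follows essentially the same route as the paper: establish that each snowflaked line $(\R,|\cdot|^{s_k})$ is an unbounded BPI space of dimension $1/s_k$ (the paper cites the general and easily checked fact that a snowflake of an unbounded BPI space is BPI, while you verify it directly with explicit affine similitudes), and then apply Theorem \ref{product_BPI} iteratively, using the biLipschitz equivalence of product distances. Your extra care about the open-versus-closed ball and the iteration bookkeeping are minor routine points that do not change the argument.
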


\begin{proof} It it easy to check that if $(X,d)$ is an unbounded BPI space of dimension $\alpha$, then for $0<s<1$, $(X,d^s)$ is an unbounded BPI space of dimension $\alpha /s$. The space $(\R,|\cdot|)$ is an unbounded BPI space of dimension $1$, thus $(\R,|\cdot|^{s_k})$ is an unbounded BPI space of dimension $1/s_k$ for all $k$. By Theorem \ref{product_BPI}, $(\R^N,d_\s)$ is a BPI space of dimension $\alpha=\sum s_k^{-1}$, since $d_\s$ is the $\ell^1$ product distance of the distances $|\cdot|^{s_k}$. 
\end{proof}

\section{Construction of a quotient semi-distance.}\label{new semi-distance}

We construct a semi-distance $d_\RR$ on $\R$ using a shortening technique, by following the article \cite{tapio}. First we define an equivalence relation $\RR$ on $\R$ in a self-similar way. This corresponds to the shortcuts. We then look at the quotient semi-distance $d_\RR$. By construction, $d_\RR\leq |\cdot|^s$, where $s$ is the minimum snowflaking factor. For compactness reason, it is more convenient to work with the subset $[0,1]$ of $\R$, endowed with the semi-distance $d_\RR$. By using a theorem in \cite{tapio}, the quotient space $([0,1]/d_\RR,\overline{d}_\RR)$ is Ahlfors regular of dimension $\alpha=1/s$. We finally prove that the space $([0,1]/d_\RR,\overline{d}_\RR)$ is a BPI space.

\subsection{Motivation: the philosophy of shortcuts.} In $(\R,|\cdot|^s)$, the triangle inequality can be improved if the points are chosen correctly. This is the general idea of the shortcuts' method. Let us explain this. In what follows, every ball is a ball for $|\cdot|^s$. When the centre of a ball of radius $r$ does not matter, we just write $B_r$. 

Let $x,y,p,q\in\R$ be four distinct points. By the triangle inequality, 
\begin{equation}\label{triangle_ineq} |x-y|^s \leq |x-p|^s+|p-q|^s+|q-y|^s.
\end{equation}
Without loss of generality, we may assume that $|x-p|\leq |y-q|$. Suppose that there exists $r>0$ such that $q\in B(p,r)$ and $x,y\notin B(p,r)$. We write $|p-q|^s = C^s r$ where $C\in (0,1)$. 
We have $|x-p|\geq r^{1/s}\geq(1-C)r^{1/s}$ and $|y-q|\geq |y-p|-|p-q|\geq (1-C)r^{1/s}$, so
$$|p-q|\leq \frac{C}{1-C} |x-p|.$$
By using the inequality $(1+t)^s \leq 1+s t$ for $t\geq 0$, we have 
\begin{align}
|x-y|^s 	&\leq \Big(|x-p|+|p-q|+|q-y|\Big)^s \nonumber \\
			&\leq \Big(\frac{1}{1-C}|x-p|+|q-y|\Big)^s \nonumber \\
			&= |q-y|^s\Big(1+\frac{1}{1-C}\frac{|p-x|}{|q-y|}\Big)^s \nonumber\\
			&\leq |q-y|^s +\frac{s}{1-C}|q-y|^{s-1}|p-x| \nonumber \\
			&\leq \frac{s}{1-C}|x-p|^s +|q-y|^s \label{triangle_ineq_better}.
\end{align}

If $C\leq 1-s$, then \eqref{triangle_ineq_better} leads to an improvement of \eqref{triangle_ineq}: the term $|p-q|^s$ disappears. 
The ball $B(p,(1-s)^s r)$ seems to be invisible when going from $x$ to $y$. This motivates the introduction of a shortcut between $p$ and $q$ by identifying them.

\begin{definition}
A metric space $(X,d)$ for which there exists $\lambda\in (0,1)$ such that for all $r>0$, and all balls $B_r$ of radius $r$, there are two points $p,q\in B_r$ satisfying $d(p,q)\geq\lambda r$ and 
\begin{equation}\label{invisible}d(x,y)\leq d(x,p)+d(q,y),\quad \forall x,y\notin B_r
\end{equation}
is called \emph{a space with $\lambda$-invisible pieces}. We say that $(p,q)\in B_r\times B_r$ is \emph{$\lambda$-invisible outside $B_r$}.
\end{definition}

We proved above that $(\R,|\cdot|^s)$ has $\lambda$-invisible pieces for all $0<\lambda <(1-s)^s$. In \cite{tapio}, it was proven that the Heisenberg group as well as any snowflake of an Ahlfors regular space have invisible pieces.

\subsection{Construction of the shortcuts.} Following \cite{tapio}, we construct an equivalence relation $\RR$ that corresponds to the shortcuts.
\\

Let $c$ be a fixed constant. Let $\NN_1$ be a $4\lambda$-separated set, and a $c\lambda$-net for $X$, i.e. 
\begin{equation}\label{initialisation}
\forall x,y\in \NN_1\text{ distinct},\ d(x,y)\geq 4\lambda\text{ and }X = \bigcup_{x\in\NN_1}B_d(x,c\lambda).
\end{equation}
Then for all $x\in\NN_1$, choose two points $p_x,q_x\in B_d(x,\lambda)$ such that $(p_x,q_x)$ is $\lambda$-invisible outside $B_d(x,\lambda)$. Define the shortcuts of level $1$ as $\SS_1=\left\{(p_x,q_x),x\in\NN_1\right\}\cup\left\{(q_x,p_x),x\in\NN_1\right\}\subset X\times X$. By induction, let $\NN_n$ be a $4\lambda^n$-separated set that is also a $c\lambda^n$-net of $X$, such that 
\begin{equation}\label{heredity}
\mathcal{N}_n\subset X\ \setminus \bigcup_{\substack{(p,q)\in\SS_k\\ 1\leq k\leq n-1}}\{p,q\}^{4\lambda^n},\end{equation}
where $\{p,q\}^{4\lambda^n}$ is the $4\lambda^n$-neighborhood of $\{p,q\}$, i.e. $\{p,q\}^{4\lambda^n}=\left\{x\in X, d(\left\{p,q\right\},x)<4\lambda^n\right\}$.

For all $x\in\NN_n$, choose $(p_x,q_x)$ $\lambda$-invisible outside $B_d(x,\lambda^n)$, and then define the level $n$ shortcuts as $\SS_n= \left\{(p_x,q_x),x\in\NN_n\right\}\cup\left\{(q_x,p_x),x\in\NN_n\right\}\subset X\times X$. Finally, define the set of all shortcuts: 
$$\SS=\bigcup_{n\geq 1}\SS_n.$$

\begin{definition}
A set $\SS$ in $X\times X$ constructed as above is called \emph{a set of shortcuts}. An element $(p,q)\in\SS$ is called \emph{a shortcut between $p$ and $q$}. The integer $n$ such that $(p,q)\in\SS_n$ is called \emph{the level of the shortcut $(p,q)$}. 
\end{definition}

With a set of shortcuts $\SS$, we define an equivalence relation $\RR$ on $X$: 
\begin{equation}\label{equiv_relation}x\RR y\Leftrightarrow (x,y)\in \SS\ \text{ or }\ x=y.
\end{equation}
The following proposition is proved in \cite{tapio} (Section $3$).
\begin{proposition}\label{proposition}
Let $\lambda\in (0,1)$ and $(X,d)$ be an Ahlfors $\alpha$-regular space with $\lambda$-invisible pieces. If $\SS$ is a set of shortcuts on $X$ and $\RR$ the equivalence relation defined as in \eqref{equiv_relation}, then the quotient metric space $(X/d_\RR,\overline{d}_\RR)$ is Ahlfors $\alpha$-regular.
\end{proposition}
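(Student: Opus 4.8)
The plan is to exhibit an explicit Ahlfors $\alpha$-regular measure on the quotient, namely the push-forward $\mu:=\pi_\#\HH^\alpha_d$ of the Hausdorff measure of $X$ along the canonical projection $\pi\colon(X,d)\to(X/d_\RR,\overline d_\RR)$, and then invoke Lemma \ref{sufficient_condition} to transfer the regularity to $\HH^\alpha_{\overline d_\RR}$ itself.

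Two preliminary remarks. Since the trivial itinerary shows $d_\RR\le d$, the map $\pi$ is $1$-Lipschitz and onto. Moreover the construction of $\SS$ guarantees that each point of $X$ is an endpoint of at most one shortcut: within a level this is because $\NN_n$ is $4\lambda^n$-separated while the chosen endpoints lie in the balls $B_d(\cdot,\lambda^n)$ and $d(p_x,q_x)\ge\lambda^{n+1}$, and across levels it is exactly the content of \eqref{heredity}. Hence $\RR$ is an equivalence relation all of whose classes have at most two points, and the set $E$ of shortcut endpoints is countable, so $\HH^\alpha_d(E)=0$. Consequently $\mu$ is a Borel measure on the quotient and, for any representative $x$ of a class $\bar x$ and any $r>0$,
\begin{equation*}
\mu\big(B_{\overline d_\RR}(\bar x,r)\big)=\HH^\alpha_d\big(\{y\in X:d_\RR(x,y)<r\}\big),
\end{equation*}
so it remains to bound this quantity above and below by constant multiples of $r^\alpha$, uniformly in $x$ and in $r$ up to the diameter (the endpoint case being absorbed into the constants as usual).

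The lower bound is free: $d_\RR\le d$ gives $B_d(x,r)\subseteq\{y:d_\RR(x,y)<r\}$, hence $\HH^\alpha_d(\{y:d_\RR(x,y)<r\})\ge\HH^\alpha_d(B_d(x,r))\ge K_X^{-1}r^\alpha$ by Ahlfors regularity of $X$. For the upper bound the key point is the estimate
\begin{equation*}
d_\RR(x,y)<r\ \Longrightarrow\ \text{there is }y'\in X\text{ with }y'\RR y\text{ and }d(x,y')\le Cr,
\end{equation*}
for a constant $C$ depending only on $\lambda$, $\alpha$ and the regularity constant of $X$. Granting it, $\{y:d_\RR(x,y)<r\}$ is contained in $\overline{B_d(x,Cr)}\cup E$ (if the class $[y]$ is a singleton then $y=y'$ lies in the ball, and otherwise $y\in E$), and since $\HH^\alpha_d(E)=0$ we get $\HH^\alpha_d(\{y:d_\RR(x,y)<r\})\le\HH^\alpha_d(\overline{B_d(x,Cr)})\le K_X(2Cr)^\alpha$. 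Together with completeness of the quotient — which follows from the same estimate and completeness of $X$, by lifting a $d_\RR$-Cauchy sequence to a $d$-Cauchy sequence through $\RR$-representatives — this gives Ahlfors $\alpha$-regularity of $\mu$, and Lemma \ref{sufficient_condition} concludes.

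The main obstacle is the displayed implication, which amounts to controlling the $d$-diameter of $\pi^{-1}$ of a $\overline d_\RR$-ball of radius $r$ after replacing each point by a suitable $\RR$-representative. To prove it one first reduces to \emph{reduced} near-optimal itineraries, in which no point or shortcut is used twice and every pair flanked by two consecutive shortcut links is nontrivial; the $d$-length of such an itinerary, apart from its final link, is then at most $r$ plus the total displacement of the shortcut links it uses, each level-$n$ link displacing at most $2\lambda^n$. The remaining work is to bound this total displacement by $O(r)$. One shows by an exchange argument that a reduced near-optimal itinerary of cost $<r$ cannot use shortcuts of scale $\gg r$ (reaching the endpoint of such a shortcut already costs a multiple of its scale, because $x$ need not lie near the relevant net centre), and that at each admissible scale $\lambda^n\lesssim r$ the $4\lambda^n$-separation of $\NN_n$ together with \eqref{heredity} forces any two level-$n$ shortcut links to be separated along the itinerary by a sub-itinerary of $d$-cost comparable to $\lambda^n$; hence the level-$n$ links together displace by $O(r)$, and summing the geometric series over $\lambda^n\lesssim r$ yields the bound $Cr$. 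An alternative route is to exploit the self-similarity of the construction — rescaling a level-$n$ net ball by $\lambda^{-n}$ reproduces a configuration of the same type — and to run the ball-measure estimate by induction on scale; in either case this combinatorial analysis of itineraries is where the argument of \cite{tapio} does its real work.
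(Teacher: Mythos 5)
The paper itself does not prove this proposition; it simply cites \cite{tapio}, Section 3, so your argument has to stand on its own. Its frame (push forward $\HH^\alpha_d$ under $\pi$, estimate measures of preimages of quotient balls, conclude via Lemma \ref{sufficient_condition}) is sound, and your lower bound and the observation that $\RR$-classes have at most two points are fine. The problem is that the displayed implication at the heart of your upper bound is false, and your sketch of its proof contains the precise error. Nothing in the construction keeps the ball centre $x$ away from endpoints of large-scale shortcuts: conditions \eqref{initialisation} and \eqref{heredity} separate shortcut endpoints from \emph{each other}, not from an arbitrary point $x$, and the invisibility property \eqref{invisible} only says something when both points lie outside the ball containing the shortcut. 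Concretely, fix a shortcut $(p,q)\in\SS_n$, so $d(p,q)\ge\lambda^{n+1}$, and let $r\ll\lambda^{n+1}$. Take $x$ with $d(x,p)<r/2$ and any $y$ with $d(y,q)<r/4$ which is not itself a shortcut endpoint. The itinerary $x\to p\to q\to y$ gives $d_\RR(x,y)<r$, yet the class of $y$ is the singleton $\{y\}$, so $y'=y$ and $d(x,y')\ge d(p,q)-r\gg Cr$ once $r$ is small. Thus your claim that a near-optimal itinerary of cost $<r$ ``cannot use shortcuts of scale $\gg r$'' is exactly backwards — reaching a huge shortcut can cost arbitrarily little — and $\pi^{-1}\bigl(B_{\overline d_\RR}(\bar x,r)\bigr)$ is in general \emph{not} contained in $\overline{B_d(x,Cr)}\cup E$.

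What is true, and what the upper bound actually needs, is a weaker structural statement: with total walking budget $r$, an itinerary can cross only boundedly many shortcuts of scale comparable to or larger than $r$ (endpoints of distinct such shortcuts are separated by at least a fixed multiple of $r$, by \eqref{initialisation} and \eqref{heredity}, while the displacement caused by each small shortcut is bounded by a constant times the walking spent to reach its endpoint from the previous one, so after one large crossing the remaining reach is too short for another). Hence the preimage of a quotient ball is contained in a union of boundedly many $d$-balls of radius $Cr$, centred at $x$, at the partner of $x$, and at the far endpoints of the few large shortcuts that can be crossed; this still yields $\mu(B)\le C'r^\alpha$ and rescues your scheme, but it is a genuinely different statement from the one you assert. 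Two further repairs follow from the same counterexample: your completeness argument cannot lift Cauchy sequences by choosing representatives, since for $x$ near $p$ and $y$ near $q$ no choice of $x'\RR x$, $y'\RR y$ makes $d(x',y')$ comparable to $d_\RR(x,y)$; and your final bookkeeping (``each admissible level contributes $O(r)$, sum a geometric series'') does not close, because there are infinitely many levels with $\lambda^n\lesssim r$ — you need the per-link accounting (each link's displacement controlled by the walking cost preceding it), not a level-by-level sum. These are exactly the points where the combinatorial work of \cite{tapio} is needed, and as written your proof does not supply it.
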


In our case, we apply Proposition \ref{proposition} to $(X,d)=(\R,|\cdot|^s)$. It is an Ahlfors regular space of dimension $1/s$ with $\lambda$-invisible pieces, for all $0<\lambda < (1-s)^s$. We construct the set of shortcuts $\SS$ in a self-similar way, so that the space $([0,1]/d_\RR,\overline{d}_\RR)$ is a BPI space of dimension $\alpha=1/s$. 

\noindent
Let $l\in\N, h=1/2^l$ and $\mu=h^s$. For all $n\in\N$, we define the level $n$ shortcuts (see Figure \ref{shortcuts})
$$\SS_n=\left\{\Big(h^{n-1}\Big(m+\frac{1}{2}\Big),h^{n-1}\Big(m+\frac{1}{2}\Big)+h^{n+1}\Big), m\in \Z\right\}\subset \R\times \R.$$
Let $\SS=\displaystyle\bigcup_{n\geq 1}\SS_n$. We will see that for $l,c$ large enough, $\SS$ is a set of shortcuts.\\

\begin{figure}
\begin{center}
\begin{tikzpicture}[scale=14.5]
\draw (0,0)--(1,0);
\draw (0,-0.01)--(0,0.01);
\draw (1/4,-0.01)--(1/4,0.01);
\draw (1/2,-0.01)--(1/2,0.01);
\draw (3/4,-0.01)--(3/4,0.01);
\draw (1,-0.01)--(1,0.01);
\draw (1/16,-0.005)--(1/16,0.005);
\draw (2/16,-0.005)--(2/16,0.005);
\draw (3/16,-0.005)--(3/16,0.005);
\draw (5/16,-0.005)--(5/16,0.005);
\draw (6/16,-0.005)--(6/16,0.005);
\draw (7/16,-0.005)--(7/16,0.005);
\draw (9/16,-0.005)--(9/16,0.005);
\draw (10/16,-0.005)--(10/16,0.005);
\draw (11/16,-0.005)--(11/16,0.005);
\draw (13/16,-0.005)--(13/16,0.005);
\draw (14/16,-0.005)--(14/16,0.005);
\draw (15/16,-0.005)--(15/16,0.005);
\draw (1/2,0) arc (180:0:1/32);
\draw (1/8,0) arc (180:0:1/128 and 1/100);
\draw (3/8,0) arc (180:0:1/128 and 1/100);
\draw (5/8,0) arc (180:0:1/128 and 1/100);
\draw (7/8,0) arc (180:0:1/128 and 1/100);
\draw (1/32,0) arc (180:0:1/512 and 1/170);
\draw (3/32,0) arc (180:0:1/512 and 1/170);
\draw (5/32,0) arc (180:0:1/512 and 1/170);
\draw (7/32,0) arc (180:0:1/512 and 1/170);
\draw (9/32,0) arc (180:0:1/512 and 1/170);
\draw (11/32,0) arc (180:0:1/512 and 1/170);
\draw (13/32,0) arc (180:0:1/512 and 1/170);
\draw (15/32,0) arc (180:0:1/512 and 1/170);
\draw (17/32,0) arc (180:0:1/512 and 1/170);
\draw (19/32,0) arc (180:0:1/512 and 1/170);
\draw (21/32,0) arc (180:0:1/512 and 1/170);
\draw (23/32,0) arc (180:0:1/512 and 1/170);
\draw (25/32,0) arc (180:0:1/512 and 1/170);
\draw (27/32,0) arc (180:0:1/512 and 1/170);
\draw (29/32,0) arc (180:0:1/512 and 1/170);
\draw (31/32,0) arc (180:0:1/512 and 1/170);
\draw (0,-0.025) node{$0$};
\draw (1,-0.025) node{$1$};
\end{tikzpicture}
\caption{The shortcuts $\SS_1, \SS_2$ and $\SS_3$ in $[0,1]$ for $l=2, h=1/4$.}
\label{shortcuts}
\end{center}
\end{figure}

Following the notations defined above, we set $\NN_n=\left\{h^{n-1}\Big(m+\frac{1}{2}\Big),m\in\Z\right\}$. If $x,y\in\NN_n$ are distinct, then $|x-y|^s\geq\mu^{n-1}.$ Moreover, for all $x\in\R$, there exists $m\in\Z$ such that $\left|x-h^{n-1}(m+1/2)\right|^s=\mu^{n-1}|x/h^{n-1}-1/2-m|^s\leq 2^{-s}\mu^{n-1}$. Since $\NN_n$ must be a $4\lambda^n$-separated set and a $c\lambda^n$-net for $X$, we obtain the inequalities
\begin{equation}\label{cond_1}4\lambda^n\leq\mu^{n-1}\leq 2^sc\lambda^n.
\end{equation}
If $x\in\NN_n$, the closest point $p$ to $x$ for which there exists $q$ such that $(p,q)\in\bigcup_{1\leq j \leq n-1}S_j$ verifies $|x-p|^s=\mu^{n-1}(1/2-h)^s=\mu^{n-2}(2^{l-1}-1)^s$. The condition \eqref{heredity} implies that 
\begin{equation}\label{cond_2}
4\lambda^n\leq\mu^{n-2}(2^{l-1}-1)^s.
\end{equation}
Finally, every couple $(p_m,q_m)\in\SS_n$ has to be $\lambda$-invisible outside $B_{|\cdot|^s}(h^{n-1}(m+1/2),\lambda^n)$, so
\begin{equation}\label{cond_3}
\lambda^{n+1}\leq \mu^{n+1}\leq (1-s)^s\lambda^n.
\end{equation}
We see that if $\lambda=\mu$ and $l,c$ are sufficiently large such that 
$$\frac{1}{2^sc}\leq\mu\leq \min\Big\{\frac{1}{4},(1-s)^s,\frac{1}{2}(2^{l-1}-1)^{s/2}\Big\},$$
then the conditions \eqref{cond_1}, \eqref{cond_2} and \eqref{cond_3} are verified. Let us henceforth fix the constants $l$ and $c$. With this set of shortcuts $\SS$, we define as above the equivalence relation $\RR$ on $\R$ by 
$$x\RR y\Leftrightarrow (x,y)\in\SS \text{ or }x=y.$$ By Proposition \ref{proposition}, the space $(\R/d_\RR,\overline{d}_\RR)$ is Ahlfors regular of dimension $\alpha=1/s$. 
\\

In the sequel, it will be interesting to work on $[0,1]$ endowed with this semi-distance $d_\RR$. A priori, if $\RR_{[0,1]}$ denotes the equivalence relation $\RR$ restricted on the subset $[0,1]\times [0,1]$ (that is one takes only shortcuts in $[0,1]$), then $d_\RR\leq d_{\RR_{[0,1]}}$, because there are more itineraries for the relation $\RR$ than for the relation $\RR_{[0,1]}$. Actually we will see in Corollary \ref{egalite} that for all $x,y\in [0,1], d_\RR(x,y)=d_{\RR_{[0,1]}}(x,y)$.

By Proposition \ref{proposition} and by what we observed above, the space $([0,1]/d_{\RR_{[0,1]}},\overline{d}_{\RR_{[0,1]}})$ is also Ahlfors regular of dimension $\alpha$.

Until the end of the article, the symbol $\RR$ always denotes the equivalence relation on $\R$ we just constructed.

\subsection{The metric space $([0,1],d_\RR)$ is a BPI space.}\label{d_R BPI}

In order to have compactness, we will prove that the subspace $[0,1]$, endowed with $d_\RR$ is a BPI space. To do so, we prove that $d_\RR=d_{\RR_{[0,1]}}$ on $[0,1]$ (Corollary \ref{egalite}), thus $([0,1]/d_\RR,\overline{d}_\RR)$ is Ahlfors regular of dimension $\alpha =1/s$. Then we prove that any two balls possess big pieces that are biLipschitz equivalent for the rescaled distances.

The notion of BPI space has been defined for a metric space, but $d_\RR$ is only a semi-distance on $[0,1]$. Let $B(\pi(x),r)\subset ([0,1]/d_\RR,\overline{d}_\RR)$ be a ball. By definition, we have $B(\pi(x),r)=\pi\big(B(x,r)\big)$ where $\pi : ([0,1],d_\RR)\to ([0,1]/d_\RR,\overline{d}_\RR)$ is the canonical projection, and $B(x,r)\subset ([0,1],d_\RR)$. Moreover, $\pi$ is an isometry by definition. To prove that $([0,1]/d_\RR,\overline{d}_\RR)$ is a BPI space, it is thus sufficient to prove that there exist constants $C,\theta$ so that for each pair of balls $B(x,r), B(y,t)$ in $([0,1],d_\RR)$, there is a closed subset $A\subset B(x,r)$ with $\HH^\alpha_{d_\RR}(A)\geq \theta r^\alpha$ and a $C$-biLipschitz map $f:(A,r^{-1}d_\RR)\to \big(B(y,t),t^{-1}d_\RR\big)$. 
\\

Until the end of the section, each ball $B(x,r)$ is a ball for the semi-distance $d_\RR$. 
For this paragraph, we introduce the following definition for notational convenience. 

\begin{definition} An interval $I=[a,b]$ is called \emph{an interval without shortcut at the ends} if for all $x\in\R, (a,x)\notin \SS$ and $(x,b)\notin \SS$. 
\end{definition}

\begin{lemma}\label{existence} For all $x\in\R$, and $r>0$, there exists $I\subset B(x,r)$ an interval of the form $I=[h^nm,h^n(m+1)]$, without shortcut at the ends, where $n=1+\lceil\frac{\ln r}{\ln \mu}\rceil$ and $ m\in\Z$.
\end{lemma}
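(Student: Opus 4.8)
We want an interval $I = [h^n m, h^n(m+1)]$ inside $B(x,r)$, with $n = 1 + \lceil \ln r / \ln \mu \rceil$, such that neither endpoint is the left or right coordinate of any shortcut. Let me think about this.

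The key facts: shortcuts in $\SS_j$ have the form $(h^{j-1}(m+\tfrac12), h^{j-1}(m+\tfrac12)+h^{j+1})$. So the endpoints of shortcuts are at positions $h^{j-1}(m+\tfrac12)$ (half-integer multiples of $h^{j-1}$) and $h^{j-1}(m+\tfrac12) + h^{j+1} = h^{j+1}(h^{-2}(m+\tfrac12) + 1) = h^{j-1}(m+\tfrac12) + h^{j+1}$. The dyadic points $h^n m$ for integer $m$ — when $l \geq 1$ so $h = 2^{-l}$ — these are never half-integer-multiples of $h^{j-1}$ for $j \leq n$ (since $h^n m / h^{j-1} = h^{n-j+1} m$ is an integer multiple of $h \leq 1/2$, hence is a multiple of $1/2$ only if it's an integer... need $h^{n-j+1} m$, for $j \leq n$ this is $h^{\geq 1} m$, which can be a half-integer). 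Hmm, this needs care. Actually the cleaner route: pick $n$ as given, note $\operatorname{diam}_{d_\RR}(I) \leq |h^n m - h^n(m+1)|^s = h^{ns} = \mu^n$, and by the choice of $n$ we have $\mu^n \leq \mu \cdot \mu^{\lceil \ln r/\ln\mu\rceil}$; since $\mu < 1$, $\mu^{\lceil \ln r/\ln\mu \rceil} \leq \mu^{\ln r/\ln\mu} = r$, wait that needs $\lceil \cdot\rceil \geq \ln r/\ln\mu$ and $\mu<1$ so larger exponent gives smaller value, giving $\mu^n \leq \mu r < r$. So any such dyadic interval near $x$ has $d_\RR$-diameter $< r$, hence sits in $B(x,r)$ once we ensure it contains a point $d_\RR$-close to $x$; and since $d_\RR \leq |\cdot|^s$ and the grid $h^n\Z$ has Euclidean spacing $h^n$ with $(h^n)^s = \mu^n < r$, there is a grid point within $d_\RR$-distance $< r$ of $x$, and then an adjacent interval works.

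So the remaining issue is the "no shortcut at the ends" condition. The plan is: among the grid points $h^n m$ in a small Euclidean neighborhood of $x$, show that infinitely many (in fact a positive fraction, and certainly enough to bound two consecutive intervals) are not shortcut-endpoints. First I would observe that a grid point $h^n m$ can only be an endpoint of a shortcut of level $j$ with $j \leq n$: indeed shortcuts of level $j$ have endpoints on the grid $\tfrac12 h^{j-1}\Z$ shifted by $\{0, h^{j+1}\}$, and for this to meet $h^n\Z$ one needs $j$ not too large (for $j > n$ the relevant coordinates are not in $h^n\Z$ — this is where the dyadic structure $h = 2^{-l}$ is used). Then the left endpoints of level-$j$ shortcuts, $h^{j-1}(m+\tfrac12)$, when $j \leq n$, lie on the grid $h^n\Z$ only at positions $h^n m'$ with $m' \equiv$ (something forced) $\pmod{h^{j-1-n}/2}$, i.e. they occupy a sublattice of index growing like $h^{n-j+1}/2$, hence density $\leq 2 h^{n-j+1}$; similarly for right endpoints $h^{j-1}(m+\tfrac12)+h^{j+1}$. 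Summing over $1 \leq j \leq n$ the total density of grid points that are some shortcut endpoint is at most $2\sum_{j=1}^n h^{n-j+1} + 2\sum_{j=1}^n h^{n-j+1} \leq 4 \sum_{i\geq 1} h^i = \frac{4h}{1-h} < 1$ provided $h < 1/5$, i.e. $l \geq 3$ (consistent with "$l$ large" fixed earlier). Since the forbidden grid points have density $< 1$, in any window of enough consecutive grid points near $x$ one finds two adjacent "good" points bounding an interval without shortcut at the ends, and contained in $B(x,r)$ by the diameter estimate above.

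\begin{proof}[Proof sketch of Lemma \ref{existence}]
Set $n = 1 + \lceil \ln r/\ln\mu\rceil$, so that $\mu^n \leq \mu r < r$ (using $\mu<1$). For any $m\in\Z$, the interval $I_m = [h^nm, h^n(m+1)]$ satisfies
$$\operatorname{diam}_{d_\RR}(I_m) \leq \operatorname{diam}_{|\cdot|^s}(I_m) = (h^n)^s = \mu^n < r,$$
so $I_m \subset B(x,r)$ as soon as $I_m$ contains a point at $d_\RR$-distance $<r$ of $x$; since the Euclidean grid $h^n\Z$ meets $[x-h^n, x]$, there is $m_0$ with $h^{n}m_0 \in [x-h^n,x]$, whence $d_\RR(x, h^n m_0) \leq |x-h^nm_0|^s \leq (h^n)^s < r$, and each $I_m$ with $m \in \{m_0 - K, \dots, m_0 + K\}$ lies in $B(x,r)$ for a fixed $K$ to be chosen.

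It remains to find $m$ in this range with $I_m$ an interval without shortcut at the ends. A point $h^nm$ is an endpoint of a shortcut only of level $j \leq n$: a level-$j$ shortcut has endpoints $h^{j-1}(k+\tfrac12)$ and $h^{j-1}(k+\tfrac12) + h^{j+1}$ for $k\in\Z$, and for $j > n$ neither of these lies in $h^n\Z$ because $h = 2^{-l}$ and $h^{j-1}(k+\tfrac12)/h^n = h^{j-1-n}(k+\tfrac12) \notin \Z$. For fixed $j \leq n$, the set of $m\in\Z$ with $h^nm \in \{\,h^{j-1}(k+\tfrac12) : k\in\Z\,\}$ is an arithmetic progression of step $h^{n-j+1}/2 \geq 1$ in $\Z$ (an integer since $h^{-1}$ is even), hence has density $\leq 2h^{n-j+1}$ in $\Z$; the same bound holds for the right endpoints. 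Summing,
$$\#\{\, m : h^nm \text{ is a shortcut endpoint}\,\} \text{ has density } \leq 4\sum_{j=1}^{n} h^{n-j+1} \leq \frac{4h}{1-h} < 1,$$
where the last inequality holds since $l$ was fixed large (any $l \geq 3$ gives $h \leq 1/8$). Consequently, among any $K+1$ consecutive grid points there are two consecutive ones, say $h^nm$ and $h^n(m+1)$, that are not shortcut endpoints, for $K$ depending only on $h$; this $m$ lies in $\{m_0-K,\dots,m_0+K\}$ after relabelling, and the interval $I_m = [h^nm, h^n(m+1)]\subset B(x,r)$ has no shortcut at its ends.
\end{proof}

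The main obstacle is the bookkeeping that a dyadic grid point $h^nm$ cannot be an endpoint of a high-level shortcut and that the low-level shortcut endpoints occupy a sparse (summably sparse) portion of the grid; once that density bound beats $1$, the existence of two consecutive good endpoints is immediate.
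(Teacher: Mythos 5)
Your overall strategy is the same as the paper's: place a block of roughly $h^{-1}$ consecutive grid intervals $[h^nm,h^n(m+1)]$ inside $B(x,r)$ (using $d_\RR\le|\cdot|^s$ and $\mu^{n-1}\le r$) and then argue that not every one of them can have a shortcut at an end. The first half is fine up to a small slip: ``$I_m\subset B(x,r)$ as soon as $I_m$ contains a point at $d_\RR$-distance $<r$ of $x$'' is not what the triangle inequality gives (it gives $<r+\mu^n$); the correct and easy route is $d_\RR(x,q)\le|x-q|^s\le\big((K+2)h^n\big)^s\le(K+2)^s\mu r<r$, which forces the window size $K$ to be at most about $h^{-1}$ --- consistent with the paper's window of $h^{-1}-1$ intervals.

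The genuine gap is in the decisive counting step. From ``the bad grid points have density $\le 4h/(1-h)<1$'' you conclude that any $K+1$ consecutive grid points contain two adjacent good ones; this inference is invalid. First, density $<1$ is not enough even in principle: a bad set consisting of every other grid point has density $1/2$ and leaves no two adjacent good points, so you need density $<1/2$ \emph{in the given window}, and your own bound $4h/(1-h)$ equals $4/7>1/2$ at $h=1/8$. Second, and more seriously, the bad set is a union of about $2n$ arithmetic progressions (left and right endpoints for each level $j\le n$), and a per-window count carries a $+1$ boundary term for each progression whose step exceeds the window; since $n$ is unbounded while the window has only about $h^{-1}$ grid points, the summed-density bound does not exclude, a priori, that each coarse level contributes one bad point and the window is swamped. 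What actually saves the argument (and is what the paper's terse ``at least one suits'' rests on) is the separation structure across levels: in grid coordinates, left endpoints of levels $j\le n-1$ lie in $2^{2l-1}\Z$, level-$n$ left endpoints are odd multiples of $2^{l-1}$, level-$n$ right endpoints are not grid points at all, and a short parity computation shows any two endpoints coming from distinct levels $\le n-1$ differ by more than $h^{-1}$ grid steps; hence a window of $h^{-1}+1$ consecutive grid points meets at most a bounded number (independent of $n$) of shortcut endpoints, which for $l$ large leaves two adjacent good points. Without this (or some equivalent) structural input your proof does not close; note also that your claim that shortcuts of level $j>n$ have no endpoint on $h^n\Z$ is verified only for left endpoints (the right endpoints need the analogous, easy, parity check), and ``step $h^{n-j+1}/2$'' should read $h^{-(n-j+1)}/2$.
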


\begin{proof}
Since $d_\RR\leq |\cdot|^s$, we have $]x-r^{1/s},x+r^{1/s}[\subset B(x,r)$. By definition of $n$, $h^{n-1}\leq r^{1/s}$, so $B(x,r)$ contains an interval of (Euclidean) length $h^{n-1}$. In this interval, we can find $h^{-1}-1$ intervals of the form $[h^nm,h^n(m+1)]$, where $m\in\Z$. Among these intervals, at least one suits.
\end{proof}


Let us make the following easy remark, that will be useful later: if $(p,q)\in\SS$ with $p<q$, is a shortcut of level less than or equal to $n$, then there exists $m\in\Z$ such that $p=h^nm$ (and then $q=h^{n+1}(m2^{l}+1)$). Moreover, the converse is true: if $(p,q)\in\SS, p<q$, and $p=h^nm$, then the level of the shortcut $(p,q)$ is less than or equal to $n$. 
We can also say something about $q$: if $(p,q)\in\SS, p<q$ and $q=h^{n+1}m$, then the level of the shortcut $(p,q)$ is less than or equal to $n$. 
\begin{lemma}\label{in_out} Let $n\geq 0, m\in\Z$ and $I=[h^nm,h^n(m+1)]$ be an interval without shortcut at the ends. Then $I$ contains no shortcut of level less than or equal to $n$. Moreover, for all $(p,q)\in\SS,\ p\in I\Leftrightarrow q\in I$. 
\end{lemma}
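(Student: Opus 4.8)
\medskip

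The plan is to prove the two assertions in turn, using the structural characterization of shortcuts recorded just before the statement: a shortcut $(p,q)\in\SS$ with $p<q$ has level $\leq n$ if and only if $p=h^nm'$ for some $m'\in\Z$, and likewise its level is $\leq n$ if $q=h^{n+1}m'$ for some $m'\in\Z$. First I would prove that $I=[h^nm,h^n(m+1)]$ contains no shortcut of level $\leq n$. Suppose $(p,q)\subset I$ with $p<q$ is such a shortcut; then by the characterization $p=h^nm'$ for some integer $m'$, and $h^nm\leq p=h^nm'\leq h^n(m+1)$ forces $m'\in\{m,m+1\}$, i.e. $p$ is one of the two endpoints of $I$. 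Since $I$ is an interval without shortcut at the ends, there is no $x$ with $(h^nm,x)\in\SS$ or $(x,h^n(m+1))\in\SS$; in particular $p=h^nm$ is impossible (it would give the shortcut $(p,q)$ with $p$ the left endpoint), and $p=h^n(m+1)$ is impossible too since then $q>p=h^n(m+1)$ would lie outside $I$. This contradiction proves the first claim.

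\medskip

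For the second assertion, by symmetry of $\SS$ it suffices to show that if $(p,q)\in\SS$ with $p\in I$ then $q\in I$; applying this to the reversed shortcut then gives the converse. Write $(p,q)$ with, say, level $k$, and consider the two cases $p<q$ and $p>q$. If $p<q$, then $q=p+h^{k+1}$ where $p=h^{k-1}(m_0+\tfrac12)$ is the midpoint of a level-$k$ net interval. If $k\leq n$, the previous paragraph already shows $p\notin I$ unless $p$ is an endpoint — but $p$ is a midpoint $h^{k-1}(m_0+\tfrac12)$, never of the form $h^nm''$ when $k\leq n$, so in fact $p$ strictly between the endpoints forces... — here I must be a little careful; the cleaner route is: if $k\leq n$ then $p=h^{k-1}(m_0+\tfrac12)$ is \emph{not} a multiple of $h^n$ (since $h^{k-1}(m_0+\tfrac12)/h^n = h^{k-1-n}(m_0+\tfrac12)$ is a half-integer times an integer power of $h\geq 2$, hence not an integer), so $p$ lies strictly inside $I$; but then $q=p+h^{k+1}$ and $h^{k+1}\leq h^{n+1}\leq h^n$, and one checks $q$ cannot jump past the right endpoint $h^n(m+1)$ without $p$ being within $h^{n+1}$ of that endpoint, which after dividing by $h^n$ contradicts that $p/h^n$ is a half-integer multiple of an integer $\geq 2$. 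If instead $k>n$, then both $p$ and $q=p+h^{k+1}$ lie in a single level-$k$ block of Euclidean size $h^{k-1}$, and since $h^{k-1}$ is much smaller than $h^n$ and $I$ has no shortcut at its ends (so $p\neq h^nm, h^n(m+1)$), the whole shortcut stays inside $I$. The case $p>q$ is symmetric, using the "$q=h^{n+1}m''$" half of the characterization. I expect the main obstacle to be exactly this bookkeeping: making the "a level-$k$ shortcut with $k\leq n$ that starts inside $I$ cannot exit $I$" step rigorous, i.e.\ checking that the displacement $h^{k+1}$ is too small relative to the distance from an interior point $p=h^{k-1}(m_0+\tfrac12)$ to the nearest multiple of $h^n$. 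The key quantitative input is that this distance is at least $\tfrac12 h^{k-1}\geq \tfrac12 h^n \cdot h^{k-1-n} \geq h^{k}$ when $k\geq n+1$...

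\medskip

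— more precisely, the distance from $p=h^{k-1}(m_0+\tfrac12)$ to the nearest integer multiple of $h^n$ is $h^n$ times the distance from $h^{k-1-n}(m_0+\tfrac12)$ to $\Z$, and since $h^{k-1-n}$ is an integer (as $k\leq n$ forces $k-1-n\leq -1$... ) — here the clean dichotomy is $k\leq n$ versus $k\geq n+1$, and in the first case one uses that $h^{n+1-k}$ is a positive integer power of $2^l$, so $p/h^n = (m_0+\tfrac12)/h^{n+1-k}\cdot h$ has denominator divisible by $h^{n+1-k}\geq 2$ in lowest terms, hence $p$ is never a multiple of $h^n$ and the gap to the nearest such multiple is $\geq h^n/h^{n+1-k} = h^{k-1}> h^{k+1}\geq \operatorname{length}$ of the jump, so $q$ stays on the same side; in the second case $k\geq n+1$ so the entire level-$k$ net interval $[h^{k-1}m_0, h^{k-1}(m_0+1)]$ has length $h^{k-1}\leq h^n$ and either lies inside $[h^nm,h^n(m+1)]$ (hence $p,q\in I$) or meets its boundary, but then one of $h^nm, h^n(m+1)$ would be a point of the form $h^{k-1}(\text{integer})$ inside the net interval around which we are free to pick $(p,q)$ — and since $I$ has no shortcut at the ends, $(p,q)$ cannot be anchored at that boundary point, while the only level-$k$ shortcut meeting $I$ near such a boundary point would have to be anchored there; so $(p,q)\subset I$. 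This completes the argument; the remaining cases ($p>q$, and the symmetric roles) follow by reflecting the shortcut and invoking the "$q=h^{n+1}m''$" characterization.
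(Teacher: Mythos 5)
Your proof of the first assertion is correct and is exactly the paper's argument. The second assertion, however, breaks down on an arithmetic point, and the error directly contradicts the characterization you quote at the start. For a shortcut of level $k\leq n$ with $p<q$, the left endpoint $p=h^{k-1}(m_0+\tfrac12)$ \emph{is} a multiple of $h^n$: indeed $p/h^n=2^{l(n+1-k)}(m_0+\tfrac12)=2^{l(n+1-k)}m_0+2^{l(n+1-k)-1}\in\Z$ because $l(n+1-k)\geq l\geq 1$ (this is precisely the remark ``level $\leq n$ iff $p\in h^n\Z$'' that you invoked for the first part). Your claim that for $k\leq n$ the number $h^{k-1-n}(m_0+\tfrac12)$ is a half-integer times an integer and ``hence not an integer'' is false---a half-integer times an even integer is an integer---so the whole ``gap $\geq h^{k-1}>h^{k+1}$'' argument in that branch collapses; the inequality $h^{k+1}\leq h^{n+1}$ you use there is also backwards when $k\leq n$. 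The divisibility facts go the other way around: for $k\leq n$, $p\in h^n\Z$, so $p\in I$ would force $p$ to be an endpoint of $I$, excluded by the no-shortcut-at-the-ends hypothesis; one must then still rule out $q\in I$ (if $k<n$ then $q=p+h^{k+1}\in h^n\Z$ as well, so $q$ would again be an endpoint; if $k=n$ then $p\leq h^nm-h^n$ gives $q<h^nm$). For $k\geq n+1$ it is $p$ that is \emph{not} in $h^n\Z$, and the quantitative step the lemma needs is the one the paper makes: $p$ and $h^n(m+1)$ are both multiples of $h^{k}$, so if $p$ lies strictly inside $I$ its distance to the right endpoint is at least $h^{k}>h^{k+1}$, whence $q=p+h^{k+1}\in I$; symmetrically, $q$ and $h^nm$ are multiples of $h^{k+1}$, giving $q\in I\Rightarrow p\in I$.

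Your handling of the case $k>n$ does not supply such an estimate: the assertion that a boundary point of $I$ could lie ``inside the net interval'', and the appeal to the shortcut being ``anchored'' at a point we are ``free to pick'', have no meaning here, since $\SS$ is defined explicitly and deterministically---no choice is being made---and no inequality is ever verified. A correct repair of your idea does exist: for $k\geq n+1$ the endpoints of $I$ are multiples of $h^{k-1}$, so $I$ is a union of level-$(k-1)$ intervals $[h^{k-1}m_0,h^{k-1}(m_0+1)]$; since $p$ is the midpoint of one such interval and $q=p+h^{k+1}\leq p+h^{k-1}/4$, both lie strictly inside that single interval, so $p\in I$ (or $q\in I$) forces the whole interval, hence both points, into $I$. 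Either this containment argument or the paper's divisibility-gap estimate must replace what you wrote; as it stands, the proof of the second assertion does not go through.
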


\begin{proof}
The fact that $I$ contains no shortcut of level less than or equal to $n$ is easy with the remark made above, and with the assumption that $I$ is an interval without shortcut at the ends. 

Then, let $(p,q)\in\SS, p<q$ be a shortcut of level $n' > n$. Write $p=h^{n'}m'$, $q=h^{n'}m'+h^{n'+1}$ with $m'\in\Z$. Suppose that $p\in I$, that is
$h^nm<h^{n'}m' <h^{n}(m+1)$, i.e. $m<h^{n'-n}m' <m+1.$ If one cuts the interval $[m,m+1]$ into equal intervals of length $h^{n'-n}$, then one sees that $|m+1-h^{n'-n}m'|\geq h^{n'-n}>h^{n'-n+1}$, so $m<h^{n'-n}m'+h^{n'-n+1} <m+1$, which means that $q\in I$. A similar argument proves that $q\in I\Rightarrow p\in I$. 
\end{proof}

\begin{lemma}\label{adjacent} Let $n\geq 0, m\in\Z$ and $I=[h^nm,h^n(m+1)]$ be an interval without shortcut at the ends. Then either $[h^n(m-1),h^nm]$ or $[h^n(m+1),h^n(m+2)]$ is an interval without shortcut at the ends. 
\end{lemma}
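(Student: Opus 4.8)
The plan is to recall the key dictionary established in Lemma \ref{in_out}: an interval of the form $I=[h^nm,h^n(m+1)]$ without shortcut at the ends contains no shortcut of level $\leq n$, and every shortcut of level $>n$ either lies entirely in $I$ or entirely outside it. The only obstruction to $[h^n(m-1),h^nm]$ or $[h^n(m+1),h^n(m+2)]$ being an interval without shortcut at the ends is the presence of a shortcut touching the outer endpoints $h^n(m-1)$ or $h^n(m+2)$, or touching the shared endpoints $h^nm$, $h^n(m+1)$ on the ``wrong'' side. So I would argue by contradiction: suppose both $[h^n(m-1),h^nm]$ and $[h^n(m+1),h^n(m+2)]$ fail to be intervals without shortcut at the ends, and derive a contradiction with the hypothesis that $I$ itself has no shortcut at its ends.

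**Locating the forbidden shortcuts precisely.** First I would use the elementary remark preceding Lemma \ref{in_out}: if $(p,q)\in\SS$ with $p<q$ and $p=h^{n'}m'$, then the level $n'$ of the shortcut satisfies $n'\leq$ (the exponent), with $q=h^{n'}m'+h^{n'+1}$; and symmetrically if $q=h^{n'+1}m''$ then the level is $\leq n'$. Because $I$ has no shortcut at the ends, there is no shortcut $(h^nm,x)$ or $(x,h^nm)$, nor $(h^n(m+1),x)$ or $(x,h^n(m+1))$. Now if $[h^n(m+1),h^n(m+2)]$ is not an interval without shortcut at the ends, the failure must be at the endpoint $h^n(m+2)$ (since $h^n(m+1)$ already cannot carry a shortcut). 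So there is a shortcut with one end equal to $h^n(m+2)$; writing $h^n(m+2)=h^{n}(m+2)$ and analysing whether it is the left end $p$ or the right end $q$ of the shortcut, I get a shortcut $(h^n(m+2), h^n(m+2)+h^{n+1})$ of level $\leq n$ heading rightward, or a shortcut $(h^n(m+2)-h^{n+1}, h^n(m+2))$ — but the latter would be a level $\leq n-1$ shortcut whose right end is $h^n(m+2)$, hence it has the form with left end $h^{n-1}\cdot(\text{something})$; I would check its left endpoint lies in $I$ or in $[h^n(m+1),h^n(m+2)]$, and by Lemma \ref{in_out} (applied to the appropriate parent interval) reach a contradiction with one of the ``no shortcut at the ends'' hypotheses. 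The same analysis on the left side, with the shortcut touching $h^n(m-1)$, produces the symmetric constraint.

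**Closing the argument via the net property $\NN_n$.** The real point is that shortcuts of a given level are spaced out: the level-$n$ shortcuts have left endpoints in $h^{n-1}\N$-scaled positions, i.e. at $h^{n-1}(m'+\tfrac12)$, and two distinct base points of $\NN_n$ are $\geq 4\lambda^n$ apart, which in the self-similar choice $\lambda=\mu=h^s$ means the relevant endpoints cannot be too close. Concretely, a shortcut of level exactly $n$ has its left end at an $h^{n-1}(m'+\tfrac12)$, which is the midpoint of a length-$h^{n-1}$ dyadic block and in particular is \emph{not} a point of the form $h^n\Z$ at which our endpoints $h^n(m-1),\dots,h^n(m+2)$ sit (these are integer multiples of $h^n$, and $m'+\tfrac12$ scaled by $h^{n-1}=h^{n}\cdot h^{-1}=h^n\cdot 2^l$ gives $h^n\cdot 2^l(m'+\tfrac12)=h^n(2^l m'+2^{l-1})\in h^n\Z$ — so I must be careful, and instead track the \emph{shortcut displacement} $h^{n'+1}$ versus the block length $h^n$). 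The clean way: if a shortcut of level $n'$ has an endpoint equal to some $h^n k$ with $n'\geq n$, then by the remark its level is $\leq n$, so $n'=n$ exactly, and then its left endpoint is a midpoint $h^{n-1}(m'+\tfrac12)$, which cannot equal $h^nk=h^{n-1}\cdot h k = h^{n-1}\cdot 2^{-l}k$ unless $2^{-l}k=m'+\tfrac12$, impossible for integer $k$ when $l\geq 1$. Hence such a shortcut must have its \emph{right} endpoint equal to $h^nk$; its right endpoint is $h^{n-1}(m'+\tfrac12)+h^{n+1}$, and I would again check this forces a specific $k$ and, combined with the assumed double failure on both sides of $I$, contradicts the hypothesis that $h^nm$ and $h^n(m+1)$ carry no shortcut.

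**Main obstacle.** The bookkeeping of which endpoint ($p$ or $q$, left or right) of a shortcut lands on which of the four candidate lattice points $h^n(m-1),h^nm,h^n(m+1),h^n(m+2)$, and keeping the level inequalities straight, is the fiddly part; the conceptual content is just ``shortcuts of level $\leq n$ are absent inside dyadic $h^n$-blocks whose endpoints are clean, and shortcuts of higher level respect the $h^n$-block decomposition (Lemma \ref{in_out}), so the only way for \emph{both} neighbours of $I$ to be bad is for a shortcut to straddle an endpoint of $I$ — which is forbidden.'' I expect the cleanest write-up to dispatch the case $n=0$ (or small $n$) separately if the midpoint/endpoint arithmetic degenerates, and otherwise to run the contradiction uniformly using only the remark before Lemma \ref{in_out} and Lemma \ref{in_out} itself.
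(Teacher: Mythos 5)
Your skeleton matches the paper's: argue by contradiction, observe that since $h^nm$ and $h^n(m+1)$ carry no shortcut the two failures must occur at the outer points $h^n(m-1)$ and $h^n(m+2)$, and use the remark before Lemma \ref{in_out} to bound the levels of the offending shortcuts according to whether each point is the left or right end of its shortcut. But the step that actually closes the argument is flawed. First, your ``clean way'' rests on the claim that a level-$n$ shortcut's left endpoint $h^{n-1}(m'+\tfrac12)$ cannot be of the form $h^nk$ with $k\in\Z$; this is false (and contradicts your own computation two lines earlier): $h^{n-1}(m'+\tfrac12)=h^n(2^lm'+2^{l-1})\in h^n\Z$, which is exactly why the remark says ``left endpoint a multiple of $h^n$ $\Leftrightarrow$ level $\leq n$''. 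Second, and more importantly, the contradictions you aim for --- Lemma \ref{in_out} applied to a parent interval, or the cleanliness of the endpoints $h^nm$, $h^n(m+1)$ --- simply do not arise in all configurations. For instance, if the shortcut at $h^n(m-1)$ points away from $I$ (other end $<h^n(m-1)$) and the shortcut at $h^n(m+2)$ also points away (other end $>h^n(m+2)$), neither shortcut touches or straddles an endpoint of $I$, so nothing in your proposed mechanism is violated; similarly a shortcut of level $\leq n-2$ with right end $h^n(m+2)$ spans entirely over $I$ without putting an endpoint in $I$ or in its neighbours. A single bad neighbour is perfectly possible, so no contradiction can be extracted from one side alone.

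The missing idea --- which is what the paper's proof uses, and which you mention in passing but then abandon --- is quantitative: the two offending shortcuts are \emph{distinct} (they cannot coincide, since their endpoints $h^n(m-1)$ and $h^n(m+2)$ would force a shortcut of Euclidean length $3h^n$, not a power of $h$) and, by the case analysis, both have level at most $n$; yet they have endpoints at Euclidean distance only $3h^n$ from each other. This is incompatible with the construction: base points of $\NN_j$ are $4\lambda^j$-separated by \eqref{initialisation}, and by \eqref{heredity} each $\NN_j$ avoids the $4\lambda^j$-neighbourhoods of all lower-level shortcuts, so any two distinct shortcuts of level $\leq n$ have all endpoints at Euclidean distance strictly greater than $3h^n$ apart (in the explicit construction, consecutive admissible positions differ by at least $h^{n-1}-h^{n+1}\geq h^n(2^l-2^{-l})>3h^n$). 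That proximity estimate, carried through the four left/right cases, is the content of the paper's ``the distance between the two shortcuts is too small'', and without it your argument does not reach a contradiction.
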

\begin{proof}
Suppose not. Then, there exist $x,y\in\R$ such that $(h^n(m-1),x)\in\SS$ and $\big(y,h^n(m+2)\big)\in\SS$. Many variations are possible: 
\begin{enumerate}
\item If $h^n(m-1)< x$ and $h^n(m+2)<y$, then these two shortcuts have level less than or equal to $n$.
\item If $h^n(m-1)<x$ and $y<h^n(m+2)$, then the level of $(h^n(m-1),x)$ is less than or equal to $n$ and the level of $\big(y,h^n(m+2)\big)$ is less than or equal to $n-1$.
\item If $x<h^n(m-1)$ and $h^n(m+2)<y$, then the level of $\big(x,h^n(m-1)\big)$ is less than or equal to $n$ and the level of $(h^n(m+2),y)$ is less than or equal to $n$.
\item If $x<h^n(m-1)$ and $y<h^n(m+2)$, then these two shortcuts have level less than or equal to $n-1$.
\end{enumerate}
In each case, this is impossible, because the distance between the two shortcuts is too small. 
\end{proof}

\begin{lemma}\label{restriction_distance} Let $n\geq 0, m\in\Z$ and $I=[h^nm,h^n(m+1)]$ be an interval without shortcut at the ends. Denote by $\RR_I$ the equivalence relation on $I$ which is the restriction of $\RR$ to the subset $I\times I$. Then for all $x,y\in I$,
$$d_\RR(x,y)=d_{\RR_I}(x,y).$$
\end{lemma}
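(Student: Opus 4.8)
The plan is to show that any itinerary between $x,y\in I$ that uses shortcuts outside $I$ can be replaced by one that stays inside $I$ without increasing its length, so that the infimum defining $d_\RR$ is already achieved by itineraries confined to $I\times I$. Since clearly $d_\RR(x,y)\leq d_{\RR_I}(x,y)$ (every $\RR_I$-itinerary is an $\RR$-itinerary), it suffices to prove $d_{\RR_I}(x,y)\leq d_\RR(x,y)$, i.e. that restricting to itineraries inside $I$ does not increase the infimum.

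First I would set up the combinatorial picture. Let $(x_0,y_0,\dots,x_n,y_n)$ be an itinerary from $x$ to $y$ for $\RR$ and consider its total length $\sum_k|x_k-y_k|^s$. The ``teleports'' are the identifications $x\RR x_0$, $y_k\RR x_{k+1}$, $y_n\RR y$; each such identification is either trivial ($x_0=x$ etc.) or a shortcut $(p,q)\in\SS$. The key structural input is Lemma~\ref{in_out}: for every shortcut $(p,q)\in\SS$ one has $p\in I\Leftrightarrow q\in I$, because $I$ is an interval without shortcut at its ends and of the prescribed dyadic form. Consequently a teleport can never move a point from inside $I$ to outside $I$ or vice versa: the ``teleportation graph'' respects the partition $\{I, \R\setminus I\}$. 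Since $x,y\in I$, every teleport used in the itinerary that is incident to a point of $I$ actually has both endpoints in $I$.

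Next I would exploit this to reroute. The itinerary alternates ``continuous moves'' $[x_k,y_k]$ (contributing $|x_k-y_k|^s$) with teleports. Start at $x\in I$ and follow the itinerary; by the previous paragraph, as long as we are at a point of $I$, the next teleport (if any) keeps us in $I$. A continuous move $[x_k,y_k]$ from a point $x_k\in I$ may leave $I$; but since $I=[h^nm,h^n(m+1)]$ is an interval and we must eventually return to $y\in I$, I would replace each maximal excursion of the piecewise path outside $I$ by the corresponding sub-interval of $\partial I$ traversed inside $I$. Concretely, whenever a continuous segment $[x_k,y_k]$ crosses an endpoint $a$ of $I$, split it at $a$; the portion lying outside $I$ can be dropped and the path ``parked'' at $a$, because using the snowflake metric $|\cdot|^s$ the length of the truncated segment $[x_k,a]$ (or $[a,y_k]$) is at most $|x_k-y_k|^s$. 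Repeating this for every crossing and every excursion yields a new itinerary, all of whose points lie in $I$, whose every teleport is an $\RR_I$-identification (again by Lemma~\ref{in_out}, now applied to conclude both endpoints are in $I$), and whose length has not increased. Hence $d_{\RR_I}(x,y)\leq \sum_k|x_k-y_k|^s$, and taking the infimum over $\RR$-itineraries gives $d_{\RR_I}(x,y)\leq d_\RR(x,y)$.

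The step I expect to be the main obstacle is the bookkeeping in the rerouting argument: making precise ``truncate at the boundary and park'' for an itinerary that may cross $\partial I$ many times and interleave teleports with continuous moves, while verifying at each stage that (i) the truncated continuous segments still have $|\cdot|^s$-length bounded by the originals (this uses only monotonicity of $t\mapsto t^s$ and that $a$ lies between the old endpoints), and (ii) no teleport incident to $I$ is lost, since each such teleport has both endpoints in $I$ by Lemma~\ref{in_out}. Once the invariant ``all current points lie in $I$, length non-increasing'' is maintained through each local modification, the conclusion is immediate. I would also remark that the same argument, applied with $I$ replaced by $[0,1]$ (which one checks is an interval without shortcut at the ends in the relevant sense, or can be reduced to such intervals), yields Corollary~\ref{egalite}.
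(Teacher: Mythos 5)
Your reduction (confine itineraries to $I$, using Lemma \ref{in_out} to rule out teleports with exactly one endpoint in $I$) is the right frame, but there is a genuine gap in the rerouting step. Lemma \ref{in_out} does \emph{not} exclude shortcuts whose two endpoints lie in different connected components of $\R\setminus I$: a shortcut of level $n'$ has Euclidean length $h^{n'+1}$, so every shortcut of level at most $n-2$ is longer than $I$ and can ``step over'' it without touching it. For an itinerary that uses such a shortcut, your ``truncate and park at the boundary'' procedure fails: the path leaves $I$ through one endpoint, where you park it, but it re-enters $I$ through the \emph{other} endpoint, so your modified itinerary must additionally get from $h^nm$ to $h^n(m+1)$ inside $I$. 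That crossing is not a teleport, costs up to $\mu^n$, and is not dominated by the $|\cdot|^s$-length of the dropped excursion, since the excursion outside $I$ may itself use shortcuts (on either side of $I$ and across it) and be much cheaper than a straight walk; monotonicity of $t\mapsto t^s$ gives no comparison here. So the invariant ``all points in $I$, length non-increasing'' breaks exactly in the stepping-over case, which is the heart of the lemma. (For Corollary \ref{egalite}, i.e. $n=0$, your argument would suffice, because no shortcut has Euclidean length exceeding $h^2<1$ and hence none can step over $[0,1]$; but the lemma is needed for all $n$.)

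The paper's proof isolates this case and resolves it with an idea absent from your proposal. Writing $a$ and $b$ for the first and last indices at which the itinerary leaves $I$, the case where $x_a,x_b$ lie on the same side of $I$ (or where no teleport jumps over $I$) is handled by truncation, essentially as you propose. In the stepping-over case, the paper invokes Lemma \ref{adjacent} to produce an adjacent interval $J=I\pm h^n$ without shortcut at the ends, and uses the observation that $(p,q)\in\SS$ lies in $I\times I$ if and only if its translate by $\pm h^n$ lies in $J\times J$. The portion of the original itinerary that runs through $J$ is then translated by $\mp h^n$ into $I$ and reused (in reverse) to pay for the crossing of $I$ from one endpoint to the other, so no unpaid cost appears. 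This translation trick, or some substitute lower bound on the cost of excursions that jump over $I$, is the missing ingredient; without it your argument does not close.
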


\begin{proof}
The inequality $d_\RR\leq d_{\RR_I}$ is easy. Let us prove the other inequality. Let $x,y\in I$. Let $(x_0,y_0,\dots,x_n,y_n)$ be an itinerary from $x$ to $y$. Suppose that this itinerary gets out of $I$. We will construct another itinerary that stays in $I$ and that is shorter. Let $a=\min\left\{k\in \left\{0,\dots,n\right\}, x_k\notin I\right\}$ and $b=\max\left\{k\in \left\{0,\dots,n\right\}, x_k\notin I\right\}$. Since $x\in I$ and $x\RR x_0$, by Lemma \ref{in_out}, $a\geq 1$. 
\begin{enumerate}
\item If $x_a$ and $x_b$ are in the same connected component of $\R\setminus I$, say $x_a,x_b < h^nm$, we first remark that $y_{a-1}\notin I$ and $y_b\in I$. In fact, $y_{a-1}\RR x_a$, but by Lemma \ref{in_out}, since $x_a\notin I$, we have $y_{a-1}\notin I$. The same argument works for $y_b$. Then the itinerary 
$$(x_0,y_0,\dots,x_{a-1},h^nm,h^nm,y_{b},\dots, x_n,y_n)$$ 
stays in $I$, and is shorter than $(x_0,y_0,\dots,x_n,y_n)$. The same construction works if $h^n(m+1)<x_a,x_b$. 

\item If $x_a$ and $x_b$ are not in the same connected component of $\R\setminus I$, say $x_a<h^nm<h^n(m+1)<x_b$. We may suppose that there exists an integer $j\in\left\{a,\dots,b-1\right\}$ such that $y_j$ and $x_{j+1}$ are not in the same connected component of $\R\setminus I$ (that means that the itinerary follows a shortcut that steps over $I$, see Figure \ref{across}). In fact if this is not so, we can conclude as in the first case. We remark that if $n=0$, then this case is excluded.

 Let $J$ be an interval without shortcut at the ends, adjacent to $I$, given by Lemma \ref{adjacent}. Without loss of generality, we may suppose that $J=I+h^n$. The following remark is easy : for all $(p,q)\in\SS,\ (p,q)\in I\times I\Leftrightarrow (p+h^n,q+h^n)\in J\times J$.

\begin{figure}
\begin{center}
\begin{tikzpicture}
\draw (1.5,0)--(2.5,0);
\draw (3,0)--(9,0);
\draw [dotted] (9,0)--(10.5,0);
\draw [dotted] (2.5,0)--(3,0);
\draw (2,-0.1)--(2,0.1);
\draw (4,-0.1)--(4,0.1);
\draw (6,-0.1)--(6,0.1);
\draw (8,-0.1)--(8,0.1);
\draw (10,-0.1)--(10,0.1);
\draw [decorate,decoration={brace,amplitude=7pt},yshift=4.55pt](4,0) -- (6,0) node[black,midway,yshift=0.5cm] {$I$};
\draw [decorate,decoration={brace,amplitude=7pt},yshift=4.55pt](6,0) -- (8,0) node[black,midway,yshift=0.5cm] {$J$};
\draw [dotted] (2,0) arc (180:140:8 and 2);
\draw [dotted] (10,0) arc (0:40:8 and 2);
\draw (4,0) node[below]{\footnotesize$ h^nm$};
\draw (6,0) node[below]{\footnotesize$h^n(m+1)$};
\draw (2,0) node[below]{\footnotesize$y_j$};
\draw (10,0) node[below]{\footnotesize$x_{j+1}$};
\end{tikzpicture}
\end{center}
\caption{The shortcut $(y_j,x_{j+1})$ across $I$.}
\label{across}
\end{figure}
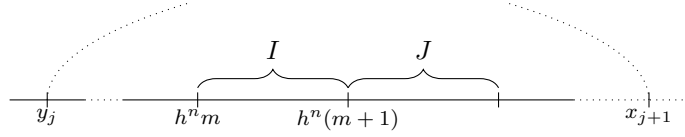

Since $J$ has no shortcut at the ends, there exists $i$ such that $x_i\notin J\cup I$, $y_i\in J$. 
 Define the new itinerary in three parts.
 \begin{enumerate}[label=(\roman*)]
 \item The first part is $(x_0,y_0,\dots,x_{a-1},h^nm)$.
 \item The second part is $(h^nm,x_b-h^n,y_{b-1}-h^n,x_{b-1}-h^n,\dots,y_i-h^n,h^n(m+1))$.
 \item The last part is $(h^n(m+1),y_b,\dots,x_n,y_n)$. 
 \end{enumerate}
 
 In Figures \ref{original} and \ref{modified}, we represent the original and modified itineraries, the thickest parts are the parts of the itinerary where one has to walk, and the arcs are the shortcuts. 

\begin{figure}
\begin{center}
\begin{tikzpicture}
\draw (0.5,0)--(8.7,0);
\draw [dotted] (9.5,0)--(8.7,0);
\draw [dotted] (0,0)--(0.5,0);
\draw (2,-0.1)--(2,0.1);
\draw (5,-0.1)--(5,0.1);
\draw (8,-0.1)--(8,0.1);
\draw [thick] (1.6,0)--(2.3,0); 
\draw [thick] (0.5,0)--(0.7,0); 
\draw [thick] (5.6,0)--(4.5,0); 
\draw [thick] (7.4,0)--(8.3,0); 
\draw (3.3,-0.05)--(3.3,0.05); 
\draw (3.7,-0.05)--(3.7,0.05); 
\draw (2.3,-0.05)--(2.3,0.05); 
\draw (1.6,-0.05)--(1.6,0.05); 
\draw (0.7,-0.05)--(0.7,0.05); 
\draw (4.5,-0.05)--(4.5,0.05); 
\draw (5.6,-0.05)--(5.6,0.05); 
\draw (6.0,-0.05)--(6,0.05); 
\draw (8.3,-0.05)--(8.3,0.05); 
\draw (7.4,-0.05)--(7.4,0.05); 
\draw [decorate,decoration={brace,amplitude=7pt,mirror},yshift=-4.55pt](2,0) -- (5,0) node[black,midway,yshift=-0.5cm] {$I$};
\draw [decorate,decoration={brace,amplitude=7pt,mirror},yshift=-4.55pt](5,0) -- (8,0) node[black,midway,yshift=-0.5cm] {$J$};
\draw (3.3,0.22) node{\tiny$x$};
\draw (3.7,0.2) node{\tiny$y$};
\draw (2.9,0.22) node{\tiny$\dots$};
\draw (4.1,0.22) node{\tiny$\dots$};
\draw (2.35,0.22) node{\tiny$x_{a-1}$};
\draw (1.6,0.22) node{\tiny$y_{a-1}$};
\draw (0.7,0.22) node{\tiny$x_{a}$};
\draw (4.5,0.22) node{\tiny$y_{b}$};
\draw (5.6,0.22) node{\tiny$x_{b}$};
\draw (7.4,0.22) node{\tiny$y_{i}$};
\draw (8.3,0.22) node{\tiny$x_{i}$};
\draw (6.1,0.22) node{\tiny$y_{b-1}$};
\draw (6.8,0.22) node{\tiny$\dots$};
\draw [dotted] (0.2,0) arc (180:140:8 and 2);
\draw [dotted] (9,0) arc (0:40:8 and 2);
\draw (0.7,0) arc (180:360:0.45 and 0.2);
\draw (5.6,0) arc (180:360:0.2 and 0.15);
\draw [<-](2,0.6)--(3.3,0.6);
\draw [<-](5.2,0.6)--(6.5,0.6);
\draw [->](0.6,0.4) arc (165:150:8 and 2);
\draw [->](7.6,0.95) arc (33:17:8 and 2);
\end{tikzpicture}
\caption{The original itinerary.}
\label{original}
\end{center}
\end{figure}
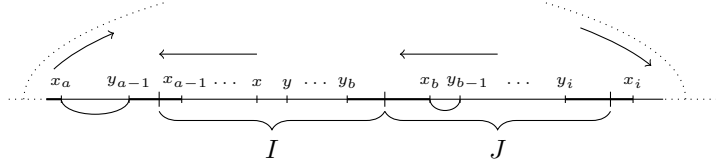

\begin{figure}
 \begin{center}
\begin{tikzpicture}
\draw (2,0)--(5,0);
\draw (2,-0.1)--(2,0.1);
\draw (5,-0.1)--(5,0.1);
\draw [thick] (2,0)--(2.3,0); 
\draw [thick] (5,0)--(4.5,0); 
\draw (3.3,-0.05)--(3.3,0.05); 
\draw (3.7,-0.05)--(3.7,0.05); 
\draw (2.3,-0.05)--(2.3,0.05); 
\draw (4.5,-0.05)--(4.5,0.05); 
\draw [decorate,decoration={brace,amplitude=7pt,mirror},yshift=-4.55pt](2,0) -- (5,0) node[black,midway,yshift=-0.5cm] {$I$};
\draw (3.3,0.22) node{\tiny$x$};
\draw (3.7,0.2) node{\tiny$y$};
\draw (2.9,0.22) node{\tiny$\dots$};
\draw (4.1,0.22) node{\tiny$\dots$};
\draw (2.35,0.22) node{\tiny$x_{a-1}$};
\draw (4.5,0.22) node{\tiny$y_{b}$};
\draw [->] (3.3,0.6)--(2,0.6);
\draw [->] (5,0.6)--(3.7,0.6);
\draw [dotted] [->] (2,-0.6)--(2,-1.3);
\draw [dotted] [->] (5,-1.3)--(5,-0.6);
\draw (2,-2)--(5,-2);
\draw (2,-2.1)--(2,-1.9);
\draw (5,-2.1)--(5,-1.9);
\draw [decorate,decoration={brace,amplitude=7pt,mirror},yshift=-4.55pt](2,-2) -- (5,-2) node[black,midway,yshift=-0.5cm] {$J-h^n=I$};
\draw (2.6,-2.05)--(2.6,-1.95); 
\draw (3.0,-2.05)--(3,-1.95); 
\draw (4.4,-2.05)--(4.4,-1.95); 
\draw (3.8,-2+0.22) node{\tiny$\dots$};
\draw (2.6,-2) arc (180:360:0.2 and 0.15);
\draw [thick] (2,-2)--(2.6,-2);
\draw [thick] (4.4,-2)--(5,-2);
\draw [->] (2.5,-1.6)--(4.5,-1.6);
\end{tikzpicture}
\caption{The modified itinerary, which stays in $I$. }
\label{modified}
\end{center}
\end{figure}
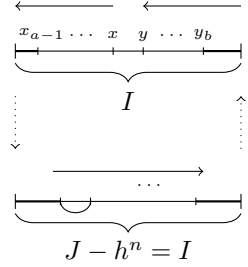

This modified itinerary stays in $I$, and is shorter than the original one. This construction works similarly if $x_b<h^nm<h^n(m+1)<x_a$, or if $J=I-h^n$. 
\end{enumerate}
We have proved that for any itinerary from $x$ to $y$, there exists a shorter itinerary between $x$ and $y$ that stays in $I$. Therefore $d_{\RR_I}\leq d_{\RR}$. 
\end{proof} 
  
\begin{coro}\label{egalite} For all $x,y\in [0,1], d_\RR(x,y)=d_{\RR_{[0,1]}}(x,y)$.
\end{coro}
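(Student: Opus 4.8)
The plan is to read off Corollary~\ref{egalite} directly from Lemma~\ref{restriction_distance} by taking the interval to be $[0,1]$ itself. With $h = 1/2^l$ we have $[0,1] = [h^0\cdot 0,\, h^0\cdot(0+1)]$, which is of the form $[h^n m,\, h^n(m+1)]$ with $n = 0$ and $m = 0$. So, once we know that $[0,1]$ is an \emph{interval without shortcut at the ends}, Lemma~\ref{restriction_distance} immediately gives $d_\RR(x,y) = d_{\RR_I}(x,y)$ for all $x,y\in I = [0,1]$; and since $\RR_I$, the restriction of $\RR$ to $I\times I$, is precisely $\RR_{[0,1]}$, this is the desired equality $d_\RR(x,y) = d_{\RR_{[0,1]}}(x,y)$.

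The single thing left to check is therefore that no element of $\SS$ has $0$ or $1$ among its two endpoints. Recall that, up to swapping its two points, every shortcut has the form $\bigl(h^{n-1}(m+\tfrac12),\ h^{n-1}(m+\tfrac12)+h^{n+1}\bigr)$ for some $n\geq 1$ and $m\in\Z$. The ``left'' endpoint $h^{n-1}(m+\tfrac12)$ is $h^{n-1}$ times a half-integer: it is never $0$ (as $m+\tfrac12\neq 0$), and it equals $1$ only if $m+\tfrac12 = h^{-(n-1)} = 2^{l(n-1)}$, an integer, which is impossible. The ``right'' endpoint equals $h^{n-1}\bigl(m+\tfrac12 + h^2\bigr)$ with $h^2 = 2^{-2l}$, so it is $0$ only if $m+\tfrac12 = -2^{-2l}$ and it is $1$ only if $m+\tfrac12 = 2^{l(n-1)} - 2^{-2l}$; since $l\in\N$ forces $l\geq 1$ and hence $2^{-2l}\in(0,\tfrac12)$, neither right-hand side is a half-integer, so both cases are excluded. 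Consequently $(0,x),(x,0),(x,1),(1,x)\notin\SS$ for all $x\in\R$, i.e.\ $[0,1]$ has no shortcut at its ends, and the corollary follows.

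I do not expect any genuine obstacle here: granted Lemma~\ref{restriction_distance}, the statement is a one-line specialization to $I=[0,1]$, and the endpoint verification is purely arithmetic. The only point to keep in mind is that the construction of $\SS$ fixed $l$ (and $c$) large — in particular $l\geq 1$ — which is exactly what guarantees that the dyadic value $h^2$ is not a half-integer and thus that $0$ and $1$ cannot arise as the far endpoint of any shortcut.
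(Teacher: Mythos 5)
Your proof is correct and is exactly the argument the paper intends: the corollary is stated as an immediate consequence of Lemma \ref{restriction_distance} applied to $I=[0,1]=[h^0\cdot 0,h^0\cdot 1]$, with the (implicit) observation that no shortcut of $\SS$ has $0$ or $1$ as an endpoint, which your arithmetic check of the endpoints $h^{n-1}(m+\tfrac12)$ and $h^{n-1}(m+\tfrac12)+h^{n+1}$ makes explicit.
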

 
We can now prove that $([0,1],d_\RR)$ is a BPI space of dimension $\alpha=1/s$. By Proposition \ref{proposition}, we know that it is Ahlfors regular of dimension $\alpha$. Let $x,y\in [0,1]$, $r,t\in (0,\text{diam}_{d_\RR}([0,1])]$, $n=1+\lceil \ln r/\ln\mu \rceil$ and $n'=1+\lceil \ln t/\ln\mu \rceil$. Let $I=[h^nm,h^n(m+1)]\subset B(x,r)$ and $I'=[h^{n'}m',h^{n'}(m'+1)]\subset B(y,t)$ be given by Lemma \ref{existence}. We remark that $n,n' \geq 1$, since $r,t\leq 1$.

 Define $f:I\to I'$ by $f(x)=h^{n'-n}x+h^{n'}(m'-m)$. Then $f$ is bijective and compatible with the shortcuts, that is $x\RR_I y\Leftrightarrow f(x)\RR_{I'} f(y)$, and $|f(x)-f(y)|^s=\mu^{n'-n}|x-y|^s$, so by Lemma \ref{lipschitz_map}, for all $x,y\in I$, 
$$
d_{\RR_{I'}}\big(f(x),f(y)\big)=\mu^{n'-n}d_{\RR_I}(x,y),
$$
thus by Lemma \ref{restriction_distance},
$$d_{\RR}\big(f(x),f(y)\big)=\mu^{n'-n}d_{\RR}(x,y).$$
The inequalities
$\displaystyle\frac{\ln(t/r)}{\ln\mu}-1\leq n'-n\leq \frac{\ln(t/r)}{\ln\mu}+1$
imply that
$\displaystyle\frac{t}{r}\mu\leq \mu^{n'-n}\leq \frac{t}{r}\mu^{-1}.$
Therefore $f:(I,r^{-1}d_\RR)\to (I',t^{-1}d_\RR)$ is a $\mu^{-1}$-biLipschitz map. 

We finally have to estimate $\HH^\alpha_{d_\RR}(I)$. The map $g:I\to [0,1]$ defined by $g(x)=h^{-n}x-m$ is a bijection that verifies 
$$d_\RR\big(g(x),g(y)\big)=\mu^{-n}d_\RR(x,y),$$
by Lemma \ref{lipschitz_map} and Lemma \ref{restriction_distance} since $g$ is bijective and compatible with the shortcuts. Then $\HH^\alpha_{d_\RR}(I)=\mu^{n\alpha}\HH^\alpha_{d_\RR}([0,1]).$ By definition of $n$, $\mu^{n\alpha}\geq \mu^{2\alpha}r^\alpha$. Finally, if we set $\theta = \mu^{2\alpha}\HH^\alpha_{d_\RR}([0,1])$, then $\HH^\alpha_{d_\RR}(I)\geq \theta r^\alpha$. 

We have proved that $([0,1],d_\RR)$ is a BPI space of dimension $\alpha=1/s$.

\section{Lipschitz functions between $([0,1],d_\RR)$ and $(\R,|\cdot|^s)$. }\label{lipschitz_function}

The following proposition deals with Lipschitz functions from $([0,1],d_\RR)$ to $(\R,|\cdot|^s)$.
\begin{proposition}\label{no_lipschitz_function} Any Lipschitz function $f:([0,1],d_\mathcal{R})\to(\R,|\cdot|^s)$ is constant.
\end{proposition}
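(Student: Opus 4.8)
The plan is to show that a Lipschitz map $f\colon([0,1],d_\RR)\to(\R,|\cdot|^s)$ cannot separate the two endpoints of any interval without shortcut at the ends, and then use a chaining argument down the dyadic scales together with the self-similar structure of $\RR$. The key quantitative input will be the bound $d_\RR\le|\cdot|^s$ together with the fact that the shortcuts identify points that are \emph{far apart} in the Euclidean metric: each pair $(p,q)\in\SS_n$ has $|p-q|^s=\mu^{n+1}$ while the ambient interval around it has $|\cdot|^s$-diameter of order $\mu^{n-1}$, a fixed multiplicative factor $\mu^{2}$ larger. So a Lipschitz bound $|f(x)-f(y)|^s\le M\,d_\RR(x,y)$ forces $f$ to collapse these gaps, and iterating this should kill all the oscillation.

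Concretely, first I would record the scaling behaviour: by Lemma~\ref{lipschitz_map} and Lemma~\ref{restriction_distance}, for each interval $I=[h^nm,h^n(m+1)]$ without shortcut at the ends there is a bijection $g\colon([0,1],d_\RR)\to(I,d_\RR)$ which is a $\mu^{n}$-similarity, so $([0,1],d_\RR)$ contains isometric copies of scaled-down versions of itself inside every such $I$. Then I would estimate the $d_\RR$-diameter of $[0,1]$: since $d_\RR\le|\cdot|^s$ it is at most $1$, and using a single level-$1$ shortcut near any point plus subadditivity one gets $\mathrm{diam}_{d_\RR}([0,1])<1$; more importantly, I want to bound $d_\RR(0,1)$ (or $d_\RR$ between the endpoints of a generic no-shortcut interval) by something strictly less than the naive $|\cdot|^s$ value. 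The heart of the matter is the inductive claim: there is a constant $\rho<1$ so that for every interval $I=[h^nm,h^n(m+1)]$ without shortcut at the ends with Euclidean length $h^n$, one has $d_\RR$-distance between its endpoints $\le \rho\cdot \mu^{n}\cdot(\text{endpoint distance in }[0,1])$-type estimate — i.e., repeatedly exploiting the improved triangle inequality \eqref{triangle_ineq_better}/\eqref{invisible} coming from invisibility of the level-$n$ ball sitting inside $I$.

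With that in hand, the argument for the Proposition goes by contradiction: suppose $f$ is $M$-Lipschitz and non-constant, so $\sup_{x,y\in[0,1]}|f(x)-f(y)|^s = \omega>0$. Pick an interval $I=[h^nm,h^n(m+1)]$ without shortcut at the ends on which $f$ still oscillates; by the similarity $g$ above, $f\circ g$ is $M\mu^{n}$-Lipschitz on $([0,1],d_\RR)$ (this is where the semi-distance scaling is crucial — the $d_\RR$ of a small no-shortcut interval is comparable to $\mu^n$, not to $\mu^{ns^{-1}}$ or anything Euclidean), while its oscillation on $[0,1]$ is forced by covering $[0,1]$ by finitely many such sub-intervals and the shortcuts linking them. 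Running this self-improvement, the Lipschitz constant needed to sustain oscillation $\omega$ shrinks by a definite factor each stage, contradicting finiteness of $M$. The main obstacle I anticipate is the bookkeeping in the inductive diameter estimate: one must carefully handle itineraries that use shortcuts stepping \emph{across} the small interval (exactly the subtlety addressed by Lemma~\ref{restriction_distance} and Lemma~\ref{adjacent}), so that the effective $d_\RR$-size of a level-$n$ no-shortcut interval is genuinely of order $\mu^n$ with a constant independent of $n$, and that the "oscillation survives on some sub-interval" step is quantitative enough to feed back into the induction.
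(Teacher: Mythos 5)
Your overall plan (renormalize along the exact self-similarity of $d_\RR$ and chain across scales) is genuinely different from the paper's proof, but as it stands it has a real gap, and I do not see how to close it without changing the mechanism. First, the central "inductive claim" is both unproved and, in the form you state it, cannot do the job. By Lemmas \ref{lipschitz_map} and \ref{restriction_distance} the affine map onto a level-$n$ interval without shortcut at the ends is an exact $\mu^n$-similarity for $d_\RR$, so the $d_\RR$-distance between its endpoints is \emph{exactly} $\mu^n d_\RR(0,1)$: there is no extra factor $\rho<1$ to be gained at each stage, and consequently the chaining over the $2^{ln}$ good level-$n$ subintervals reproduces the same bound you started with (Lipschitz constant and scale both shrink by the factor $\mu^n$), with zero gain. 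More fundamentally, any estimate that only compares $d_\RR$ with $|\cdot|^s$ up to multiplicative constants (which is all that \eqref{triangle_ineq_better} and the invisibility property give you) is compatible with $d_\RR$ being biLipschitz to $|\cdot|^s$, and a space biLipschitz to $([0,1],|\cdot|^s)$ admits plenty of non-constant Lipschitz maps to $(\R,|\cdot|^s)$ (the identity, for one). So the proof \emph{must} exploit the degenerate identifications $d_\RR(p,q)=0$ for $(p,q)\in\SS$ — i.e.\ that $f(p)=f(q)$ — at infinitely many scales accumulating at some point; your sketch gestures at this ("collapse these gaps") but never uses it quantitatively. The only place a gain could enter your scheme is on the subintervals that \emph{do} contain a shortcut at or near their ends (exactly where the affine rescaling is no longer a similarity of the structure and Lemma \ref{restriction_distance} fails), and you supply no "oscillation deficit" lemma there. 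Finally, the stated contradiction mechanism is backwards: if "the Lipschitz constant needed to sustain oscillation $\omega$ shrinks by a definite factor each stage", that does not contradict the finiteness of $M$; you would need it to blow up, or the oscillation to be forced to zero.

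For comparison, the paper closes exactly this gap by a differentiation argument rather than a metric renormalization: since $d_\RR\le|\cdot|^s$, a Lipschitz $f$ is Euclidean-Lipschitz, hence differentiable a.e.\ (Rademacher); at a point $x_0$ with $f'(x_0)\neq 0$ chosen in the full-measure set of reals whose binary expansion has arbitrarily long blocks of zeros, the rescalings $x\mapsto h^{n_p}x$ nearly align $x_0$ with the self-similar grid, and passing to the limit in the difference quotients shows that the \emph{linear} map $p\mapsto f'(x_0)p$ would have to be Lipschitz from $([0,1],d_\RR)$ to $(\R,|\cdot|^s)$; evaluating on a shortcut pair $(p,q)\in\SS$ with $d_\RR(p,q)=0$ but $|p-q|>0$ gives $f'(x_0)=0$, a contradiction. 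If you want to salvage a purely self-similar argument, you would have to prove a uniform oscillation-deficit estimate for the rescaled structure "standard shortcuts plus the extra identification coming from the level-$1$ shortcut", and show it propagates under the renormalization — a substantial missing ingredient, not a bookkeeping issue.
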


\begin{proof}
Suppose not. Let $f:([0,1],d_\mathcal{R})\to(\R,|\cdot|^s)$ be a non constant Lipschitz map. Since $d_\RR\leq |\cdot|^s$, $f:([0,1],|\cdot|^s)\to (\R,|\cdot|^s)$ is Lipschitz, and hence $f:([0,1],|\cdot|)\to (\R,|\cdot|)$ is also a Lipschitz map. By the Rademacher Theorem, $f$ is differentiable almost everywhere. Since $f$ is a non constant Lipschitz map, $f'$ does not vanish almost everywhere. Hence there exists a subset $A$ of $[0,1]$ with positive measure such that for all $x\in A, f'(x)\neq 0$. For $x_0\in A$ and $p\in [0,1]$, 
\begin{equation}\label{derivative}
\left|\frac{1}{\varepsilon}\Big(f(x_0+\varepsilon p)-f(x_0)\Big)-f'(x_0)p\right|^s\underset{\varepsilon\to 0}{\longrightarrow}0.
\end{equation}
Let $p,q\in [0,1]$ and $\varepsilon>0$ sufficiently small so that $x_0+\varepsilon p, x_0+\varepsilon q\in [0,1]$. Then
\begin{align}
|f'(x_0)p-f'(x_0)q|^s & \leq \left|f'(x_0)p-\frac{1}{\varepsilon}\Big(f(x_0+\varepsilon p)-f(x_0)\Big)\right|^s \label{ineq} \\ 
					& \quad\quad+ \varepsilon^{-s}|f(x_0+\varepsilon p)-f(x_0+\varepsilon q)|^s \nonumber \\
					& \qquad\qquad + \left|\frac{1}{\varepsilon}\Big(f(x_0+\varepsilon q)-f(x_0)\Big)-f'(x_0)q\right|^s \nonumber.
\end{align}
By \eqref{derivative}, the first and third terms of \eqref{ineq} tend to $0$ when $\varepsilon\to 0$. Since $f$ is Lipschitz, there exists $L>0$ such that $\varepsilon^{-s}|f(x_0+\varepsilon p)-f(x_0+\varepsilon q)|^s \leq L\varepsilon^{-s}d_\RR(x_0+\varepsilon p,x_0+\varepsilon q)$.

For all $n\in\N$, $m\in\Z$, the map $x\to h^n(x+m)$ defined on $\R$ is a $h^{sn}$-Lipschitz map for $d_\RR$. In fact, it follows on from Lemma \ref{lipschitz_map}, since $x\RR y\Rightarrow h^n(x+m)\RR h^n(y+m)$. Then, 
\begin{align}
h^{-sn}d_\RR(x_0+h^np,x_0+h^nq) 	
							&\leq d_\RR(x_0/h^n+p-m,x_0/h^n+q-m) \nonumber \\							
							&\leq d_\RR(x_0/h^n-m+p,p)+d_\RR(p,q)+d_\RR(q,q+x_0/h^n-m) \label{ineq2}.
\end{align}
Since $d_\RR\leq |\cdot|^s$, the first and third terms of \eqref{ineq2} are less than or equal to 
$$\left|\frac{x_0}{h^n}-m\right|^s=\left|2^{ln}x_0-m\right|^s.$$

The following is a well-known fact of base-$2$ expansion of real numbers: There exists a Borel set $U \subset [0,1]$ with $\LL^1(U) = 1$ such that for every point $x\in U$, there exists a sequence $(n_p)_{p\in\N}$ of integers such that $n_p\longrightarrow+\infty$ when $p\to +\infty$, and for all $p\in\N$, $x_{ln_p+1}=\dots =x_{2ln_p}=0$, where 
$x=\sum_{k\geq 0}x_k2^{-k}$
is the standard binary representation of $x$. In fact, if $x \in U$, then for all $p\in\N$,
\begin{align*}
2^{ln_p}x&=m_p+\sum_{k=2ln_p+1}^{+\infty}x_k2^{ln_p-k},\quad\text{ where }m_p\in\Z \\
		&\leq m_p+2^{ln_p}\sum_{k=2ln_p+1}^{+\infty}2^{-k} \\
		&= m_p+\frac{1}{2^{ln_p}}, 
\end{align*}
so $2^{ln_p}-m_p\underset{p\to +\infty}{\longrightarrow} 0$.

Let us finish the proof of the proposition. Since $A$ has positive measure, there exists a point $x_0\in A \cap U$. With the inequalities \eqref{ineq}, \eqref{ineq2} we can conclude that 
$$|f'(x_0)p-f'(x_0)q|^s\leq L d_\RR(p,q),$$
but this is impossible since $f'(x_0)\neq 0$ (take $(p,q)\in\SS$ so that $d_\RR(p,q)=0$ but $|p-q|>0$). Thus $f$ is constant. 
\end{proof}

\section{Blow-up}\label{blow_up}
In the sequel, topological properties (closed sets, compact sets, etc) are related to the Euclidean topology on $\R^N$, which is the same as the topology induced by $d_\s$. The balls will be balls for the distance $d_\s$ and the measure $\HH^\alpha$ will always refer to $\HH^\alpha_{d_\s}$, where $\alpha=\sum s_k^{-1}$ is the dimension of the BPI space $(\R^N, d_\s)$. 
Recall that $s$ is the minimum snowflaking factor, and $L$ the minimally snowflaked layer. We define a semi-distance $d_{\s,\RR}$ on $\R^N$ by modifying $d_\s$ on the minimally snowflaked layer $L$: 
$$\forall x,y\in \R^N,\ d_{\textsf{s},\RR}(x,y)=\sum_{k\in L}d_\RR(x_k,y_k)+\sum_{k\notin L}|x_k-y_k|^{s_k},$$
where $d_\RR$ is the semi-distance defined on $\R$ in Section 
\ref{new semi-distance}, by the shortcuts method. The topology induced by $d_{\s,\RR}$, for which a basis is given by the open balls $\{y\in \R^N, d_{\s,\RR}(x,y)<r\}$, is the Euclidean topology.

\begin{proposition} The quotient space $([0,1]^N/d_{\s,\RR}, \overline{d}_{\s,\RR})$ is a BPI space of dimension $\alpha$.
\end{proposition}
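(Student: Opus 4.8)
The plan is to reduce the statement to Theorem \ref{product_BPI} together with what has already been established about the one-dimensional factor. The space $([0,1]^N, d_{\s,\RR})$ is, up to the passage to the quotient, the $\ell^1$ product of $N$ metric spaces: one copy of $([0,1]/d_\RR, \overline{d}_\RR)$ for each index $k\in L$, and one copy of $([0,1], |\cdot|^{s_k})$ for each index $k\notin L$. Concretely, writing $q:\R^N\to\R^N$ for the obvious coordinatewise quotient map (applying $\pi$ in the coordinates belonging to $L$ and the identity elsewhere), we have that $\overline{d}_{\s,\RR}$ on the quotient is exactly the $\ell^1$ product distance of the $\overline{d}_\RR$-factors and the $|\cdot|^{s_k}$-factors. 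Thus it suffices to prove that a finite $\ell^1$ product of bounded BPI spaces is a BPI space.

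First I would record that each factor is a bounded BPI space: for $k\in L$, this is the content of Section \ref{d_R BPI}, where it is shown that $([0,1], d_\RR)$ (equivalently $([0,1]/d_\RR, \overline{d}_\RR)$, since $\pi$ is an isometry) is a BPI space of dimension $1/s$, and it is bounded since $d_\RR\le |\cdot|^s\le 1$ on $[0,1]$; for $k\notin L$, the space $([0,1], |\cdot|^{s_k})$ is a bounded BPI space of dimension $1/s_k$, being the snowflake of the interval (a bounded BPI space, e.g. as a closed ball in the unbounded BPI space $(\R,|\cdot|)$). Second, I would extend Theorem \ref{product_BPI} from two factors to $N$ factors by induction: the product of two bounded BPI spaces is a bounded BPI space (bounded because a product distance of two bounded distances is bounded), so iterating $N-1$ times gives that the $\ell^1$ product $P=\prod_{k\in L}([0,1],d_\RR)\times\prod_{k\notin L}([0,1],|\cdot|^{s_k})$ is a BPI space of dimension $\sum_{k\in L}\frac{1}{s}+\sum_{k\notin L}\frac{1}{s_k}=\sum_{k=1}^N s_k^{-1}=\alpha$. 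One technical point is that Theorem \ref{product_BPI} is stated for genuine metric spaces, so I would apply it to the quotient metric spaces; since the coordinatewise quotient map is surjective and distance-preserving for the product semi-distances, the quotient $([0,1]^N/d_{\s,\RR}, \overline{d}_{\s,\RR})$ is isometric to the $\ell^1$ product of the quotient metric spaces, and being a BPI space is invariant under isometry.

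Finally I would note that the $\ell^1$ product distance used here is one particular choice of product distance (the norm being $\|\cdot\|_1$ on the relevant $\R^m$), and since all product distances on a finite product are biLipschitz equivalent and the BPI property is biLipschitz invariant, this matches the framework of Section \ref{product_BPI_section}. The main obstacle is essentially bookkeeping rather than a genuine difficulty: one must be careful that the dimensions add correctly across the iterated product and that passing to quotient metric spaces does not change anything (it does not, because $\pi$ is an isometry onto its image and the product of quotient maps is again a quotient map for the product semi-distance). Once these identifications are in place, the proposition follows immediately from the $N$-fold iteration of Theorem \ref{product_BPI}.
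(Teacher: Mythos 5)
Your proof is correct and follows essentially the same route as the paper: identify $([0,1]^N/d_{\s,\RR},\overline{d}_{\s,\RR})$ with the product of the bounded BPI factors $([0,1]/d_\RR,\overline{d}_\RR)$ for $k\in L$ and $([0,1],|\cdot|^{s_k})$ for $k\notin L$, and apply Theorem \ref{product_BPI}. Your extra bookkeeping (iterating the two-factor theorem, checking boundedness and the dimension count) just makes explicit what the paper leaves implicit.
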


\begin{proof}The quotient space $([0,1]^N/d_{\s,\RR}, \overline{d}_{\s,\RR})$ is the product of $([0,1],|\cdot|^{s_k})$ for all $k\notin L$ and $([0,1]/d_\RR,\overline{d}_\RR)$ for all $k\in L$, which are bounded BPI spaces. Thus we may apply Theorem \ref{product_BPI} to conclude.\end{proof}

We want to prove the following theorem, by a blow-up technique.
\begin{theorem}\label{blow-up} Let $A\subset [0,1]^N$ be a closed subset, and $g:(A,d_{\s,\RR})\to (\R^N,d_\s)$ a Lipschitz map such that $\HH^{\alpha}\big(g(A)\big)>0$. Then there exists a Lipschitz map $f:([0,1]^N,d_{\s,\RR})\to (\R^N,d_\s)$ such that $\HH^\alpha\big(f([0,1]^N)\big)>0$. 
\end{theorem}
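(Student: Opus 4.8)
The plan is to use a standard blow-up (tangent measure / density point) argument, producing the desired map $f$ as a suitable rescaled limit of translated and dilated copies of $g$. First I would fix, by Lemma \ref{sufficient_condition}, a density point $z\in A$ of $g(A)$ in the sense that $\limsup_{r\to 0} r^{-\alpha}\HH^\alpha\big(g(A)\cap B(g(z),r)\big)>0$; such a point exists because $\HH^\alpha(g(A))>0$ and $\HH^\alpha$ restricted to $g(A)$ is (comparable to a) Radon measure on the Ahlfors regular space $(\R^N,d_\s)$. Simultaneously I would want $z$ to be a density point of $A$ inside $[0,1]^N$ for the measure $\HH^\alpha_{d_{\s,\RR}}$, so that the rescaled copies of $A$ around $z$ Hausdorff-converge (along a subsequence, using Blaschke's theorem as recalled before Lemma \ref{semi_continuity}) to a set that fills a definite proportion of $[0,1]^N$.

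Next I would carry out the rescaling. The space $(\R^N,d_{\s,\RR})$ carries, for each scale $h^n$ (with $h,\mu$ as in Section \ref{new semi-distance}) and each admissible lattice translation, a similarity: coordinatewise the map $x_k\mapsto h^{n}x_k+(\text{lattice shift})$ scales $|\cdot|^{s_k}$ by a factor $h^{s_k n}$ and, in the minimally snowflaked layer, scales $d_\RR$ by $\mu^n=h^{sn}$ provided the shift respects the self-similar shortcut structure — this is exactly the content of Lemma \ref{lipschitz_map} together with Lemma \ref{restriction_distance}, used already in Section \ref{d_R BPI}. The mismatch between the exponents $s_k$ in different coordinates means a single scalar dilation does not act as a similarity on $d_{\s,\RR}$; the fix is to rescale each coordinate by its own power, i.e. to use anisotropic dilations $\delta_t(x)=(t^{1/s_k}x_k)_k$, which do scale $d_{\s,\RR}$ by the scalar $t$ (up to the lattice-compatibility caveat in the layer $L$). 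Composing $g$ with the inverse of such a dilation centred at $z$ on the source, and with a dilation centred at $g(z)$ on the target, yields maps $g_n:(A_n,d_{\s,\RR})\to(\R^N,d_\s)$ with uniformly bounded Lipschitz constant, where $A_n$ are rescaled copies of $A$ converging in Hausdorff distance to a set $A_\infty\subset[0,1]^N$ of positive $\HH^\alpha_{d_{\s,\RR}}$-measure.

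Then I would extract a limit map. By the uniform Lipschitz bound and an Arzelà--Ascoli argument (the source balls are compact, being closed and bounded in an Ahlfors regular space), a subsequence of $g_n$ converges uniformly to a Lipschitz map $f_\infty$ defined on $A_\infty$; by Lemma \ref{kuratowski} the images $g_n(A_n)$ converge in Hausdorff distance to a set containing $f_\infty(A_\infty)$, and by the density-point choice of $z$ together with lower bounds on $\HH^\alpha(g_n(A_n)\cap B(0,1))$ and the upper semicontinuity Lemma \ref{semi_continuity} one gets $\HH^\alpha\big(f_\infty(A_\infty)\big)>0$. Finally I would extend $f_\infty$ from the closed set $A_\infty\subset[0,1]^N$ to all of $[0,1]^N$ as a Lipschitz map into $(\R^N,d_\s)$: since $(\R^N,d_\s)$ is a snowflake-type space in which each coordinate extension is governed by the scalar Lipschitz extension theorem applied to $|\cdot|^{s_k}$ (equivalently, extending the Euclidean-Lipschitz coordinate functions $x\mapsto f_\infty(x)_k$ and then snowflaking), one obtains the required $f:([0,1]^N,d_{\s,\RR})\to(\R^N,d_\s)$ with $\HH^\alpha\big(f([0,1]^N)\big)\geq \HH^\alpha\big(f_\infty(A_\infty)\big)>0$.

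The main obstacle I expect is the lattice-compatibility in the minimally snowflaked layer: arbitrary translations of $\R$ do \emph{not} preserve the shortcut relation $\RR$, so the blow-up centre $z$ and the scales $h^n$ must be chosen along the self-similar lattice $\NN_n$ (or a bounded perturbation thereof), and one must argue that density points of $g(A)$ can still be found with centres and scales of this restricted form — this is where Corollary \ref{egalite} and Lemma \ref{restriction_distance}, which let one work locally inside shortcut-free dyadic intervals, do the real work. A secondary technical point is checking that the anisotropic dilations interact correctly with $\HH^\alpha$: because $\alpha=\sum s_k^{-1}$ is exactly the homogeneity dimension of $d_{\s,\RR}$ under $\delta_t$, the measure $\HH^\alpha$ scales by $t^\alpha$, so the density computations close up, but this must be verified using the product-measure description from Proposition \ref{product_measure} rather than $\HH^\alpha$ directly.
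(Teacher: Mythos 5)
There is a genuine gap at the heart of the argument: the selection of blow-up scales and centres. Your plan is to pick a point $z\in A$ that is simultaneously a density point of $A$ and such that $g(A)$ has positive upper density at $g(z)$, and then to claim lower bounds on $\HH^\alpha\big(g_n(A_n)\cap B(0,1)\big)$ for the rescaled maps. But the rescaled image $g_n(A_n)$ is a dilate of $g\big(A\cap I\big)$ for a small cube $I$ around $z$, and Lipschitzness only gives the forward inclusion $g\big(A\cap B(z,r)\big)\subset B\big(g(z),Lr\big)$; it does \emph{not} give $g(A)\cap B(g(z),r)\subset g\big(A\cap B(z,Cr)\big)$. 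Points of $A$ far from $z$ may account for essentially all of the measure of $g(A)$ near $g(z)$, so positive density of the global image at $g(z)$ yields no lower bound on $\HH^\alpha\big(g(A\cap I)\big)$ at the scale of $I$. This is exactly what the paper's Proposition \ref{sequence} supplies: a constant $c>0$ and admissible cubes $I^{i_m}_{j_m}$ with both $\HH^\alpha(A\cap I^{i_m}_{j_m})/\HH^\alpha(I^{i_m}_{j_m})\geq 1-1/m$ and $h^{-i_m s\alpha}\HH^\alpha\big(g(A\cap I^{i_m}_{j_m})\big)\geq c$, and its proof is not a density-point selection but a covering/stopping-time argument (Lemmas \ref{number_bad} and \ref{measure_bad}, plus a disjoint family $\FF_n$ of ``small-image'' cubes) showing that if no such $c$ existed then $\HH^\alpha\big(g(A)\big)$ could be made arbitrarily small. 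Your proposal has no substitute for this step, and without it the limit map's image need not have positive measure.

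A secondary problem is the final extension step. In the paper the near-full density of $A$ in the chosen cubes forces $K_m=f_m^{-1}(A\cap I^{i_m}_{j_m})\to[0,1]^N$ in the Hausdorff distance, so the Arzel\`a--Ascoli limit is defined on a dense subset of $[0,1]^N$ and extends by uniform continuity; no Lipschitz extension theorem is needed. In your version the limit is only defined on a set $A_\infty$ of positive measure, and the proposed coordinatewise extension is not justified: a Lipschitz map $(A_\infty,d_{\s,\RR})\to(\R,|\cdot|^{s_k})$ is a H\"older map of exponent $1/s_k\geq 1$ with respect to $d_{\s,\RR}$ (equivalently, the relevant modulus $t\mapsto t^{1/s_k}$ is convex and $d_{\s,\RR}^{1/s_k}$ need not satisfy the triangle inequality), so McShane-type scalar extension does not apply, and one cannot instead extend the coordinates as Euclidean-Lipschitz functions since $d_{\s,\RR}$ only dominates from below. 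Your remarks on lattice compatibility of the dilations (the role of $Adm$, Lemmas \ref{lipschitz_map} and \ref{restriction_distance}) are on target, but the two issues above, especially the first, are where the proof actually has to work.
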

From now on, we fix a closed (and thus compact) subset $A$ of $[0,1]^N$, and an $L$-Lipschitz map $g:(A,d_{\s,\RR})\to (\R^N,d_\s)$.

For $\delta>0$, we define $\text{dil}_{\delta}(z)=(\delta^{1/s_1}z_1,\dots,\delta^{1/s_N}z_N)$ where $z=(z_1,\dots,z_N)\in\R^N$. For all $j=(j_1,\dots, j_N)\in \Z^N$. The map $\text{dil}_\delta$ is a similitude for $d_\s$: for all $x,y\in \R^N,$ 
$$d_\s\big(\text{dil}_\delta(x),\text{dil}_\delta(y)\big)=\delta d_\s(x,y).$$

For all $i\in\N$, we set 
$$I^i_{j}=\text{dil}_{h^{is}}([0,1]^N+j)=\prod_{k=1}^N\big[h^{i\frac{s}{s_k}}j_k, h^{i\frac{s}{s_k}}(j_k+1)\big].$$
The $I^i_j$ will be called "cubes" in the sequel even though it would be more correct to call them parallelepipeds. The family $\{I^i_j,\ i\in\N, j\in\Z^N\}$ is not a family of nested cubes, which means that two cubes $I^i_j, I^{i'}_{j'}$ with $i\neq i'$ might overlap. 

If we set $f_{i,j}(x)=\text{dil}_{h^{is}}(x+j),$ then $I^i_j=f_{i,j}([0,1]^N)$. We also define 
$$Adm=\left\{(i,j)\in\N\times\Z^N,\ I^i_j\subset [0,1]^N\text{ and } f_{i,j}:([0,1]^N,d_{\s,\RR})\to (I^i_j, h^{-is}d_{\s,\RR})\text{ is an isometry}\right\},$$
the set of indices for which the corresponding cube is incuded in the unit cube and is similar to it for the distance $d_{\s,\RR}$. 

Recall that by construction, for all $(i,j)\in\N\times\Z^N$ such that $I^i_j\subset [0,1]^N$, there exists $j'\in\Z^N$ such that $(i+1,j')\in Adm$ and $I^{i+1}_{j'}\subset I^i_j$.

For all $(i,j)\in Adm$, such that $A\cap I^i_j\neq\varnothing$, choose $z_{i,j}\in A\cap I^i_j$ and define

$$\begin{array}{crcl}
g_{i,j}: & \big(f_{i,j}^{-1}(A\cap I^i_j),d_{\s,\RR}\big) &\longrightarrow & (\R^N, d_\s) \\ 
	  & x & \longmapsto & \text{dil}_{h^{-is}}\big(g(f_{i,j}(x))-g(z_{i,j})\big)
\end{array}.$$

\begin{lemma} The maps $\left\{g_{i,j}, {(i,j)\in Adm}\right\}$ are Lipschitz, with uniformly bounded Lipschitz constants. 
\end{lemma}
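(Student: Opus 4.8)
The plan is to show that each $g_{i,j}$ is Lipschitz with a constant independent of $(i,j)\in Adm$, by unwinding the definitions and using that $g$ itself is $L$-Lipschitz together with the fact that $f_{i,j}$ and $\mathrm{dil}_{h^{-is}}$ are similitudes with reciprocal scaling factors. First I would fix $(i,j)\in Adm$ with $A\cap I^i_j\neq\varnothing$ and take two points $x,y\in f_{i,j}^{-1}(A\cap I^i_j)$. By definition,
$$d_\s\big(g_{i,j}(x),g_{i,j}(y)\big)=d_\s\Big(\mathrm{dil}_{h^{-is}}\big(g(f_{i,j}(x))-g(z_{i,j})\big),\ \mathrm{dil}_{h^{-is}}\big(g(f_{i,j}(y))-g(z_{i,j})\big)\Big).$$
Since $\mathrm{dil}_{h^{-is}}$ is a similitude of ratio $h^{-is}$ for $d_\s$, and since $d_\s$ is a norm-type distance (so translation by the fixed vector $-g(z_{i,j})$ does not affect $d_\s$-distances of the two arguments: $d_\s(u-w,v-w)=d_\s(u,v)$ because $d_\s(a,b)$ depends only on $a-b$), this equals
$$h^{-is}\,d_\s\big(g(f_{i,j}(x)),\,g(f_{i,j}(y))\big).$$

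Next I would apply the hypothesis that $g$ is $L$-Lipschitz from $(A,d_{\s,\RR})$ to $(\R^N,d_\s)$: since $f_{i,j}(x),f_{i,j}(y)\in A\cap I^i_j\subset A$, we get
$$h^{-is}\,d_\s\big(g(f_{i,j}(x)),\,g(f_{i,j}(y))\big)\leq L\,h^{-is}\,d_{\s,\RR}\big(f_{i,j}(x),\,f_{i,j}(y)\big).$$
Finally, because $(i,j)\in Adm$, the map $f_{i,j}:([0,1]^N,d_{\s,\RR})\to (I^i_j,h^{-is}d_{\s,\RR})$ is an isometry, which means $d_{\s,\RR}\big(f_{i,j}(x),f_{i,j}(y)\big)=h^{is}\,d_{\s,\RR}(x,y)$. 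Substituting this cancels the factor $h^{-is}$ and yields $d_\s\big(g_{i,j}(x),g_{i,j}(y)\big)\leq L\,d_{\s,\RR}(x,y)$, so every $g_{i,j}$ is $L$-Lipschitz with the same constant $L$.

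The argument is essentially a bookkeeping exercise, so there is no serious obstacle; the one point requiring a little care is the translation-invariance remark — that $d_\s$ (and $d_{\s,\RR}$) is invariant under adding a fixed vector to both arguments — which is immediate for $d_\s$ since it depends only on the coordinatewise differences, and which is also the reason the shifts by $z_{i,j}$ in the definition of $g_{i,j}$ and the shifts by $j$ inside $f_{i,j}$ behave well; I would simply state this invariance once and reuse it. The conclusion to record is that the common Lipschitz constant can be taken to be $L$ itself (or any fixed multiple thereof, depending on which product distance one normalizes with), which is exactly the uniform bound the lemma asserts.
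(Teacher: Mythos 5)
Your argument is correct and is essentially the paper's own proof: translation invariance and the similitude property of $\mathrm{dil}_{h^{-is}}$ give the factor $h^{-is}$, the $L$-Lipschitz bound for $g$ is applied, and the isometry condition in the definition of $Adm$ cancels the scaling, yielding the uniform constant $L$. One small caution: your parenthetical claim that $d_{\s,\RR}$ is invariant under arbitrary translations is not true in general (the shortcut relation $\RR$ is only preserved by special translations, which is exactly why the isometry requirement is built into $Adm$), but your proof never actually uses it, so the argument stands.
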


\begin{proof} Let $(i,j)\in Adm$ such that $A\cap I^i_j\neq\varnothing$. For all $x,y\in f_{i,j}^{-1}(A\cap I^i_j)$,
\begin{align}
d_\s\big(g_{i,j}(x),g_{i,j}(y)\big) &= h^{-is}d_\s\big(g(f_{i,j}(x)),g(f_{i,j}(y))\big) \nonumber \\ 
				&\leq Lh^{-is}d_{\s,\RR}\big(f_{i,j}(x),f_{i,j}(y)\big) \nonumber \\
				&\leq L d_{\s,\RR}(x,y)\nonumber. 
\end{align}
\end{proof}

For all $m\in\N$ and all indices $i\in\N$, we set 
$$\EE^i_m=\displaystyle\left\{j\in\Z^N, \frac{\HH^\alpha(A\cap I^i_j)}{\HH^\alpha(I^i_j)}<1-\frac{1}{m}\right\}.$$ In order to prove Theorem \ref{blow-up}, we need to find a sequence $(i_m,j_m)_{m\in\N}\in Adm^\N$ that verifies two properties, explained in the following proposition.

\begin{proposition}\label{sequence}
There exists $c>0$ such that for all $m\in\N$, there exists $(i_m,j_m)\in Adm$ such that 
\begin{equation}\label{density_1}
j_m\notin \EE^{i_m}_m,\quad i.e.\ \frac{\HH^\alpha(A\cap I^{i_m}_{j_m})}{\HH^\alpha(I^{i_m}_{j_m})}\geq 1-\frac{1}{m},
\end{equation}
and
\begin{equation} \label{density_image}
h^{-i_ms\alpha}\HH^{\alpha}\big(g(A\cap I^{i_m}_{j_m})\big) \geq c.
\end{equation}
\end{proposition}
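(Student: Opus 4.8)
The idea is to combine two density arguments: one based on the Lebesgue density theorem for the set $A$ inside $[0,1]^N$, and one based on the fact that $g(A)$ has positive $\HH^\alpha$-measure. First I would note that since $\HH^\alpha(g(A))>0$, by the definition of Hausdorff measure there is a point $y_0\in g(A)$ which is a point of positive upper density for $g(A)$; equivalently, picking a preimage, we may work near a well-chosen point of $A$. Concretely, I would choose a closed subset $A'\subset A$ with $\HH^\alpha(A')>0$ and $\HH^\alpha(g(A'))>0$, and then select a density point $z\in A'$ for the measure $\HH^\alpha\llcorner A$ restricted to $[0,1]^N$ (so that small cubes around $z$ are almost entirely filled by $A$), while simultaneously ensuring $g$ does not collapse mass too much near $z$.

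\textbf{Key steps.} The plan is to carry out the following steps in order.
\begin{enumerate}
\item Fix $m\in\N$. Using the Ahlfors regularity of $([0,1]^N,d_{\s,\RR})$ together with the recalled fact that every cube $I^i_j\subset[0,1]^N$ contains an admissible subcube $I^{i+1}_{j'}\in Adm$, reduce the problem to finding, for each scale, an admissible cube with the two density properties. Note $\HH^\alpha(I^i_j)=h^{is\alpha}\HH^\alpha([0,1]^N)$ for $(i,j)\in Adm$, up to the uniform comparability constant from Lemma \ref{sufficient_condition}.
\item Establish \eqref{density_1}: since $\HH^\alpha(A\cap[0,1]^N)>0$, by a Vitali/Lebesgue density argument (using that the cubes $I^i_j$ have bounded overlap and bounded eccentricity, hence form a Vitali-type covering), for $\HH^\alpha$-a.e.\ point $z\in A$ there is a sequence of admissible cubes shrinking to $z$ with $\HH^\alpha(A\cap I^i_j)/\HH^\alpha(I^i_j)\to 1$. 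In particular the set of "bad" indices $\EE^i_m$ is, in a quantitative sense, sparse: the cubes indexed by $\Z^N\setminus\EE^i_m$ cover $\HH^\alpha$-almost all of $A$ as $i\to\infty$.
\item Establish \eqref{density_image}: rewrite $h^{-is\alpha}\HH^\alpha(g(A\cap I^i_j))$ using that $g_{i,j}$ rescales, so $\HH^\alpha(g_{i,j}(f_{i,j}^{-1}(A\cap I^i_j)))=h^{-is\alpha}\HH^\alpha(g(A\cap I^i_j))$ after a translation (which does not affect $\HH^\alpha$). Then use $\sum_{j}\HH^\alpha(g(A\cap I^i_j))\geq \HH^\alpha(g(A))>0$ (bounded overlap of the cubes and subadditivity), together with the number of cubes at scale $i$ meeting $[0,1]^N$ being $\lesssim h^{-is\alpha}$, to deduce that for each $i$ there is at least one admissible $j$ with $h^{-is\alpha}\HH^\alpha(g(A\cap I^i_j))\geq c$ for a fixed $c>0$ independent of $i$.
\item Intersect the two conditions: show that the $j$'s satisfying \eqref{density_1} carry $\HH^\alpha$-almost all of the mass of $A$ at scale $i$ (for $i$ large depending on $m$), hence also carry a definite fraction of the mass of $g(A)$ — here one uses that $g$ is $L$-Lipschitz, so $\HH^\alpha(g(A\setminus\bigcup_{j\notin\EE^i_m}I^i_j))\leq L^\alpha\HH^\alpha(A\setminus\bigcup_{j\notin\EE^i_m}I^i_j)$, which is small. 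Therefore among the good-for-\eqref{density_1} cubes there must be one also satisfying \eqref{density_image}, with a uniform constant $c$.
\end{enumerate}

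\textbf{Main obstacle.} The delicate point is Step 4: decoupling the two density requirements while keeping the constant $c$ uniform in both $m$ and the scale. Because the cubes $\{I^i_j\}$ are not nested and can overlap, one cannot directly appeal to the martingale/dyadic form of the density theorem; instead one must use a Vitali-covering argument adapted to this bounded-overlap family, and carefully track how the Lipschitz bound on $g$ transfers smallness of the $\HH^\alpha$-measure of the exceptional part of $A$ to smallness of the corresponding part of $g(A)$. Once it is shown that the exceptional cubes (failing \eqref{density_1}) can absorb only a small fraction of $\HH^\alpha(g(A))$ at fine scales, a pigeonhole over the remaining cubes — whose number at scale $i$ is comparable to $h^{-is\alpha}$ — yields the required $(i_m,j_m)$ with both properties and a constant $c$ that can be taken to be, say, $\HH^\alpha(g(A))/(2C)$ for the overlap constant $C$.
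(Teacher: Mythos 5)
Your Steps 3--4 contain a genuine gap: admissibility. The single-scale pigeonhole itself is sound --- for fixed $m$, taking $i$ large enough you can make $\sum_{j\in\EE^i_m}\HH^\alpha(A\cap I^i_j)$ smaller than, say, $\HH^\alpha\big(g(A)\big)/(2L^\alpha)$ (this is exactly Lemma \ref{measure_bad}), and since the cubes of scale $i$ meeting $[0,1]^N$ number $\lesssim h^{-is\alpha}$, pigeonholing the subadditivity bound $\HH^\alpha\big(g(A)\big)\leq\sum_j\HH^\alpha\big(g(A\cap I^i_j)\big)$ over the indices $j\notin\EE^i_m$ produces a cube satisfying both \eqref{density_1} and \eqref{density_image} with a constant $c$ depending only on $\HH^\alpha(g(A))$, $L$ and $N$. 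But nothing guarantees that this cube satisfies $(i,j)\in Adm$. Besides the requirement $I^i_j\subset[0,1]^N$ (a boundary issue you could dispose of, since boundary cubes carry little of $A$ at fine scales), admissibility demands that $f_{i,j}$ be a similarity for $d_{\s,\RR}$, which for every coordinate $k\in L$ forces the interval $[h^ij_k,h^i(j_k+1)]$ to have no shortcut at its ends (this is what makes Lemma \ref{restriction_distance} applicable). At \emph{every} scale $i$ a fixed positive proportion of the grid intervals do have a shortcut endpoint at an end (the points $h^{n-1}(m+1/2)$ and their partners are $h^i$-grid points for all levels $n\leq i$), so the non-admissible cubes cover a proportion of $[0,1]^N$ bounded away from zero uniformly in $i$, and a Lipschitz $g$ could a priori send essentially all of its image measure into the parts of $A$ lying in those cubes at the one scale you pick. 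Your Step 3 simply asserts that the selected $j$ is admissible, and Step 4 inherits the same problem when you pigeonhole among the good-density cubes.

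This cannot be patched by the recalled fact that every cube contains an admissible subcube one scale down: passing from $I^i_j$ to such a subcube $I^{i+1}_{j'}$ preserves \eqref{density_1} up to the fixed factor $h^{-s\alpha}$, but there is no lower bound on $\HH^\alpha\big(g(A\cap I^{i+1}_{j'})\big)$ in terms of $\HH^\alpha\big(g(A\cap I^i_j)\big)$, because $g$ may concentrate its image on the part of $A\cap I^i_j$ outside that particular subcube; pigeonholing among the $h^{-s\alpha}$ subcubes finds one carrying a definite share of the image mass, but again not necessarily the admissible one, and the iteration never terminates. This is precisely why the paper does not argue at a single scale: it assumes the proposition fails, extracts scales $i_n$ with $\sum_{j\in\EE^{i_n}_m}\HH^\alpha(A\cap I^{i_n}_j)\leq 2^{-n}$ (Lemma \ref{measure_bad}), builds disjoint families $\FF_n$ of admissible small-image cubes across these scales, shows by a density-point argument --- which only ever needs \emph{one} admissible subcube inside each small ball --- that the residual set $B$ is $\HH^\alpha$-null, and then sums the two kinds of contributions over all scales to force $\HH^\alpha\big(g(A)\big)=0$, a contradiction. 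To complete your approach you would need either this multi-scale exhaustion or a new argument controlling the image mass sitting over non-admissible cubes; as written, the requirement $(i_m,j_m)\in Adm$ in Proposition \ref{sequence} is not secured.
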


The property \eqref{density_1} will imply that the sequence of compact sets $\big(f_{i_m,j_m}^{-1}(A\cap I^{i_m}_{j_m})\big)_{m\in\N}$ converges to $[0,1]^N$ in the Hausdorff distance, whereas the property \eqref{density_image} will imply that $\HH^\alpha\big(f([0,1]^N)\big)>0$, where $f$ is the blow-up map. The proof of Proposition \ref{sequence} requires some lemmas.

The next lemma proves that in a small ball $B(x,r)$ where $x$ is a point of density of a set $E$ in $\R^N$, a "good" cover of $B(x,r)$ by cubes has a small number of cubes that have density in $E$ not close to $1$. 

\begin{lemma}\label{number_bad}
Let $x$ be a point of density for a subset $E$ of $\R^N$. For all $\varepsilon >0$ and all $m\in\N$, there exists $r_0>0$ such that for all $r\in (0,r_0)$, for all $i\in\N$ and all coverings $\{I^i_j\}_{j\in J^i}$ of $B(x,r)$ such that
$$\frac{\HH^\alpha\Big(\bigcup_{j\in J^i}I^i_j\Big)}{\HH^\alpha\big(B(x,r)\big)}\leq1+\frac{\varepsilon}{m},$$
then 
$$0\leq \frac{\#(J^i\cap\EE^i_m)}{\#J^i}\leq\varepsilon.$$
\end{lemma}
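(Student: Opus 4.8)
The plan is to exploit the density point property of $x$ in $E$, together with the uniform Ahlfors regularity of $(\R^N,d_\s)$, to control the total mass of the ``bad'' cubes. First I would fix $\varepsilon>0$ and $m\in\N$ and, using that $x$ is a point of density for $E$, choose $r_0>0$ so small that for all $r\in(0,r_0)$ one has $\HH^\alpha(B(x,r)\setminus E)\leq \eta\,\HH^\alpha(B(x,r))$, where $\eta$ is a small parameter to be calibrated at the end (it will turn out that $\eta$ of the order $\varepsilon/m^2$, up to Ahlfors constants, suffices). Shrinking $r_0$ further if needed, I would also arrange that any cube $I^i_j$ meeting $B(x,r)$ with $r<r_0$ is contained in a fixed dilate $B(x,Cr)$ for a controlled $C$, so that everything happens inside a ball of measure comparable to $\HH^\alpha(B(x,r))$; this uses $\mathrm{diam}_{d_\s}(I^i_j)\lesssim h^{is}$ and $I^i_j\cap B(x,r)\neq\varnothing$ — note however that $i$ is unrestricted, so the cubes in the cover can be much larger than $r$, and one must instead argue directly on $\bigcup_{j\in J^i}I^i_j$, which by hypothesis has measure at most $(1+\varepsilon/m)\HH^\alpha(B(x,r))$.

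The core computation is then a double counting of the measure of $E^c=\R^N\setminus E$ inside the union $U:=\bigcup_{j\in J^i}I^i_j$. On one hand, each bad index $j\in J^i\cap\EE^i_m$ satisfies by definition
$$\HH^\alpha(I^i_j\setminus A)\;=\;\HH^\alpha(I^i_j)-\HH^\alpha(A\cap I^i_j)\;>\;\tfrac1m\,\HH^\alpha(I^i_j)\;\geq\;\tfrac1m\,K^{-1}h^{is\alpha},$$
using Ahlfors regularity of $(\R^N,d_\s)$ to bound $\HH^\alpha(I^i_j)$ from below by a constant times $h^{is\alpha}$ (all cubes $I^i_j$ of level $i$ are isometric up to scaling, so this constant is uniform). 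Summing over $j\in J^i\cap\EE^i_m$ and using that the cubes $I^i_j$, $j\in\Z^N$, have bounded overlap at fixed level $i$ (they tile $\R^N$, so overlap multiplicity is $1$ on the interiors), the left-hand sides add up to at most $\HH^\alpha(U\setminus A)$. Since $A\supset E$ up to the part of $B(x,r)$ outside $E$ — more precisely $U\setminus A\subset (U\setminus B(x,r))\cup(B(x,r)\setminus E)$, and $\HH^\alpha(U\setminus B(x,r))=\HH^\alpha(U)-\HH^\alpha(B(x,r))\leq \tfrac{\varepsilon}{m}\HH^\alpha(B(x,r))$ while $\HH^\alpha(B(x,r)\setminus E)\leq\eta\,\HH^\alpha(B(x,r))$ — we get
$$\tfrac1m\,K^{-1}h^{is\alpha}\,\#(J^i\cap\EE^i_m)\;\leq\;\Big(\tfrac{\varepsilon}{m}+\eta\Big)\HH^\alpha\big(B(x,r)\big).$$
On the other hand $\#J^i\cdot K^{-1}h^{is\alpha}\leq \sum_{j\in J^i}\HH^\alpha(I^i_j)=\HH^\alpha(U)\geq\HH^\alpha(B(x,r))$, equivalently $\HH^\alpha(B(x,r))\leq K^2 h^{is\alpha}\#J^i$ after also using the upper Ahlfors bound $\HH^\alpha(I^i_j)\leq Kh^{is\alpha}$. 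Dividing the two displays gives
$$\frac{\#(J^i\cap\EE^i_m)}{\#J^i}\;\leq\;m\,K^{3}\Big(\tfrac{\varepsilon}{m}+\eta\Big)\;=\;K^{3}\varepsilon+mK^{3}\eta,$$
and choosing $\eta\leq \varepsilon/(mK^{3})$ at the outset (this is an admissible choice of $r_0$ since $x$ is a density point and $m$ is fixed) yields the bound $\leq 2K^3\varepsilon$; rescaling $\varepsilon$ by the harmless factor $2K^3$ at the very beginning gives the stated $\leq\varepsilon$. The lower bound $\#(J^i\cap\EE^i_m)/\#J^i\geq 0$ is trivial.

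The step I expect to be the main obstacle is the bounded-overlap / covering bookkeeping: the family $\{I^i_j\}$ is explicitly noted \emph{not} to be nested across levels, but crucially I only ever use cubes of a single fixed level $i$ within one covering $\{I^i_j\}_{j\in J^i}$, and at fixed level the cubes $\{I^i_j\}_{j\in\Z^N}=\{\mathrm{dil}_{h^{is}}([0,1]^N+j)\}_{j\in\Z^N}$ genuinely tile $\R^N$, so they have overlap multiplicity one and the additivity $\sum_{j\in J^i}\HH^\alpha(I^i_j)=\HH^\alpha(\bigcup_{j\in J^i}I^i_j)$ holds up to the $\HH^\alpha$-null overlap of faces. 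The second delicate point is that the covering is only assumed to satisfy the \emph{upper} mass bound $\HH^\alpha(U)\leq(1+\varepsilon/m)\HH^\alpha(B(x,r))$ and to cover $B(x,r)$; I must be careful to use only these two facts (no control on how far $U$ extends beyond $B(x,r)$ is needed beyond the measure bound, precisely because $\HH^\alpha(U\setminus B(x,r))$ is already controlled by the upper bound). Once these two points are handled, the rest is the elementary division of inequalities above, and the choice of $r_0$ is dictated solely by the density-point hypothesis applied with the single fixed threshold $\eta=\eta(\varepsilon,m,K)$.
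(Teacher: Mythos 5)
Your core double counting is sound and is essentially the paper's argument in complementary form: where you bound from above the mass of $\bigcup_{j\in J^i}I^i_j\setminus E$ carried by the bad cubes, the paper bounds from below the mass of $E\cap B(x,r)$ split over good and bad cubes; the two bookkeepings are equivalent. (A minor point: your inclusion $U\setminus A\subset (U\setminus B(x,r))\cup(B(x,r)\setminus E)$ silently assumes $E\cap B(x,r)\subset A$; the intended reading of the lemma, consistent with its proof and its later use in Lemma \ref{measure_bad}, is that $\EE^i_m$ is taken with respect to $E$ itself, and then the inclusion is trivial.)

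There is, however, one step that fails as written: the final ``rescaling of $\varepsilon$''. Because you estimate $\HH^\alpha(I^i_j)$ through the two-sided Ahlfors bounds $K^{-1}h^{is\alpha}\leq\HH^\alpha(I^i_j)\leq Kh^{is\alpha}$, you only reach $\#(J^i\cap\EE^i_m)/\#J^i\leq K^{3}\varepsilon+mK^{3}\eta$, and you propose to recover the stated bound by running the argument with $\varepsilon/(2K^{3})$ in place of $\varepsilon$. But $\varepsilon$ also occurs in the hypothesis on the covering, $\HH^\alpha\big(\bigcup_{j}I^i_j\big)\leq(1+\varepsilon/m)\HH^\alpha\big(B(x,r)\big)$; rescaling shrinks the class of admissible coverings, so what you actually prove is the lemma with excess threshold $\varepsilon/(2K^{3}m)$ instead of $\varepsilon/m$ --- a strictly weaker statement (it would still suffice for Lemma \ref{measure_bad}, but it is not the statement). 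The repair is immediate and uses an observation you already make: at a fixed level $i$ all cubes $I^i_j$ are images of $[0,1]^N$ under $\mathrm{dil}_{h^{is}}$ composed with a translation (an isometry for $d_\s$), hence they all have exactly the same measure $h^{is\alpha}\HH^\alpha([0,1]^N)$, and since they tile, $\HH^\alpha(I^i_j)=\frac{1}{\#J^i}\HH^\alpha\big(\bigcup_{j\in J^i}I^i_j\big)$; no Ahlfors constants are needed. Using this, and keeping $\HH^\alpha\big(\bigcup_{j}I^i_j\big)$ rather than $\HH^\alpha\big(B(x,r)\big)$ in the denominator before invoking the excess hypothesis, your estimate becomes
$$\frac{\#(J^i\cap\EE^i_m)}{\#J^i}\leq m\left(1-\frac{1-\eta}{1+\varepsilon/m}\right),$$
which is exactly the paper's bound, and the choice $\eta=(\varepsilon/m)^2$ gives precisely $\varepsilon$; by contrast your additive bound $\varepsilon+m\eta$ can never be brought below $\varepsilon$ for any $\eta>0$.
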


\begin{proof}
Let $\varepsilon >0, m\in\N$ and $\eta >0$. Since $x$ is a point of density for $E$, there exists $r_0>0$ such that for all $r\in(0,r_0)$,
$$1-\eta \leq \frac{\HH^\alpha\big(E\cap B(x,r)\big)}{\HH^\alpha\big(B(x,r)\big)}\leq 1.$$
Fix $r\in (0,r_0)$. Let $i\in\N$ and $\{I^i_j\}_{j\in J^i}$ be a covering of $B(x,r)$ such that 
\begin{equation}\label{covering}\frac{\HH^\alpha\Big(\bigcup_{j\in J^i}I^i_j\Big)}{\HH^\alpha\big(B(x,r)\big)}\leq1+\frac{\varepsilon}{m},
\end{equation}
then, 
\begin{align}
(1-\eta)\HH^\alpha\big(B(x,r)\big) &\leq \HH^\alpha\big(E\cap B(x,r)\big) \nonumber \\
						 &\leq \HH^\alpha\Big(E\cap\bigcup_{j\in J^i}I^i_j\Big)\nonumber \\
						 &\leq \sum_{j\in J^i} \HH^\alpha(E\cap I^i_j)\nonumber \\
						 &= \sum_{j\in J^i\cap\EE^i_m} \HH^\alpha(E\cap I^i_j) + \sum_{j\in J^i\setminus\EE^i_m} \HH^\alpha(E\cap I^i_j) \nonumber \\
						 &\leq (1-1/m)\sum_{j\in J^i\cap\EE^i_m}\HH^\alpha(I^i_j) +\sum_{j\in J^i\setminus \EE^i_m}\HH^\alpha(I^i_j) \nonumber.
\end{align}
For all $j\in J^i$, $\displaystyle\HH^\alpha(I^i_j)=\frac{1}{\# J^i}\HH^\alpha\Big(\bigcup_{j\in J^i}I^i_j\Big)$. Using $\# J^i=\#(J^i\cap\EE^i_m)+\#(J^i\setminus\EE^i_m)$ we get
\begin{align}
(1-\eta)\HH^\alpha\big(B(x,r)\big) &\leq \frac{1}{\# J^i}\HH^\alpha\Big(\bigcup_{j\in J^i}I^i_j\Big)\Big((1-1/m)\#(J^i\cap\EE^i_m)+\#(J^i\setminus\EE^i_m)\Big) \nonumber \\
&\leq \HH^\alpha\Big(\bigcup_{j\in J^i}I^i_j\Big)\left(1-\frac{\#(J^i\cap\EE^i_m)}{m\# J^i}\right)\nonumber.
\end{align}
Using now the inequality \eqref{covering}, we get
$$(1-\eta)\leq (1+\varepsilon/m)\left(1-\frac{\#(J^i\cap\EE^i_m)}{m\# J^i}\right)$$
which is the same as
$$\frac{\#(J^i\cap\EE^i_m)}{\# J^i}\leq m\left(1-\frac{1-\eta}{1+\varepsilon/m}\right).$$
Set $\eta=(\varepsilon/m)^2$ to get the lemma.
\end{proof}

With this lemma, it is easy to get local information on the sum of the measures of all the $E\cap I^i_j$ such that $I^i_j$ has density in $E$ not close to $1$.

\begin{coro}
Let $x$ be a point of density for a subset $E$ of $\R^N$. For all $\varepsilon >0$, and all $m\in\N$, there exists $r_0>0$ such that for all $r\in (0,r_0)$, for all $i\in\N$ and all coverings $\{I^i_j\}_{j\in J^i}$ of $B(x,r)$ such that
$$\frac{\HH^\alpha\Big(\bigcup_{j\in J^i}I^i_j\Big)}{\HH^\alpha\big(B(x,r)\big)}\leq1+\frac{\varepsilon}{m},$$
then 
$$\sum_{j\in J^i\cap\EE^i_m}\HH^\alpha(E\cap I^i_j) \leq \varepsilon\left(1+\frac{\varepsilon}{m}\right)\HH^\alpha\big(B(x,r)\big).$$
\end{coro}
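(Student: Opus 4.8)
The plan is to read this off immediately from Lemma \ref{number_bad} together with the translation invariance of $\HH^\alpha$ on $\R^N$; no new idea is needed beyond a crude volume bound on the bad cubes. For each fixed $i$ the cubes $\{I^i_j\}_{j\in\Z^N}$ tile $\R^N$ by translates of a single parallelepiped, so they all share one common $\HH^\alpha$-measure $v_i:=\HH^\alpha(I^i_j)$; and for an (essentially disjoint) covering of $B(x,r)$ by such cubes one has $\HH^\alpha\big(\bigcup_{j\in J^i}I^i_j\big)=\#J^i\cdot v_i$, the very identity already exploited in the proof of Lemma \ref{number_bad}.

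First I would fix $\varepsilon>0$ and $m\in\N$ and take $r_0>0$ to be the radius produced by Lemma \ref{number_bad} for this same $\varepsilon$ and $m$. Then, given any $r\in(0,r_0)$, any $i\in\N$, and any covering $\{I^i_j\}_{j\in J^i}$ of $B(x,r)$ with $\HH^\alpha\big(\bigcup_{j\in J^i}I^i_j\big)\leq(1+\varepsilon/m)\HH^\alpha\big(B(x,r)\big)$, Lemma \ref{number_bad} yields the combinatorial estimate $\#(J^i\cap\EE^i_m)\leq\varepsilon\,\#J^i$.

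Next, bounding each summand trivially by $\HH^\alpha(E\cap I^i_j)\leq\HH^\alpha(I^i_j)=v_i$, summing over $j\in J^i\cap\EE^i_m$, and substituting $v_i=\tfrac1{\#J^i}\HH^\alpha\big(\bigcup_{j\in J^i}I^i_j\big)$ gives
$$\sum_{j\in J^i\cap\EE^i_m}\HH^\alpha(E\cap I^i_j)\;\leq\;\frac{\#(J^i\cap\EE^i_m)}{\#J^i}\,\HH^\alpha\Big(\bigcup_{j\in J^i}I^i_j\Big)\;\leq\;\varepsilon\Big(1+\frac{\varepsilon}{m}\Big)\HH^\alpha\big(B(x,r)\big),$$
where the first inequality of the last line invokes the combinatorial bound from Lemma \ref{number_bad} and the second the volume hypothesis on the covering. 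This is exactly the asserted inequality.

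I do not foresee a real obstacle: the corollary is essentially a reformulation of Lemma \ref{number_bad}. The only minor points to keep straight are that a covering of $B(x,r)$ by the grid cubes $\{I^i_j\}$ is automatically essentially disjoint, so that finite additivity of $\HH^\alpha$ (up to boundary null sets) and the identity $\HH^\alpha\big(\bigcup_{j\in J^i}I^i_j\big)=\#J^i\,\HH^\alpha(I^i_j)$ are legitimate, and that $\HH^\alpha$ is translation invariant on $\R^N$ so that all cubes with a fixed superscript $i$ do have the same measure — both facts were already used tacitly in the preceding lemma.
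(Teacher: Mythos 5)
Your proof is correct and is essentially identical to the paper's: both take $r_0$ from Lemma \ref{number_bad}, bound $\HH^\alpha(E\cap I^i_j)$ by $\HH^\alpha(I^i_j)=\frac{1}{\# J^i}\HH^\alpha\big(\bigcup_{j\in J^i}I^i_j\big)$, apply the cardinality bound $\#(J^i\cap\EE^i_m)\leq\varepsilon\,\#J^i$, and finish with the covering hypothesis.
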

\begin{proof}
Let $\varepsilon >0$, $m\in\N$. Let $r_0>0$ be given by Lemma \ref{number_bad}. For all $r\in (0,r_0)$, all $i\in\N$ and all coverings $\{I^i_j\}_{j\in J^i}$ such that 
\begin{equation*}
\frac{\HH^\alpha\Big(\bigcup_{j\in J^i}I^i_j\Big)}{\HH^\alpha\big(B(x,r)\big)}\leq1+\frac{\varepsilon}{m},
\end{equation*}
we have
\begin{align}
\sum_{j\in J^i\cap\EE^i_m}\HH^\alpha(E\cap I^i_j) &\leq \sum_{j\in J^i\cap\EE^i_m}\HH^\alpha(I^i_j) \nonumber \\
									  &\leq \frac{1}{\# J^i}\HH^\alpha\Big(\bigcup_{j\in J^i}I^i_j\Big) \#(J^i\cap\EE^i_m) \nonumber \\
									  &\leq \varepsilon \HH^\alpha\Big(\bigcup_{j\in J^i}I^i_j\Big) \quad \text{by Lemma \ref{number_bad}} \nonumber \\
								 	  &\leq \varepsilon\left(1+\frac{\varepsilon}{m}\right)\HH^\alpha\big(B(x,r)\big). \nonumber
\end{align}
\end{proof}

By a compactness argument, we can now deduce global information on the sum of the measures of the $E\cap I^i_j$ where $I^i_j$ is a cube with density in $E$ not close to $1$.

\begin{lemma}\label{measure_bad}
Suppose that $E$ is a compact subset of $\R^N$ such that $\HH^\alpha(E)>0$. Then for all $\varepsilon >0$, all $m\in\N$, there exists $i_0\in\N$ such that for all $i\geq i_0$, 
$$\sum_{j\in \EE^i_m}\HH^\alpha(E\cap I^i_j)\leq \varepsilon.$$
\end{lemma}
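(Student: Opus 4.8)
The plan is to upgrade the local estimate of the preceding Corollary to a global one, by covering $E$ -- up to an arbitrarily small fraction of its $\HH^\alpha$-mass -- by \emph{finitely many} pairwise disjoint balls on which the Corollary applies. First I would note that, $(\R^N,d_\s)$ being Ahlfors $\alpha$-regular, $\HH^\alpha$ is a doubling measure, so the Lebesgue density theorem gives that $\HH^\alpha$-almost every point of the compact set $E$ is a point of density of $E$; write $E'$ for the set of such density points, so $\HH^\alpha(E\setminus E')=0$. Fix $\varepsilon>0$ and $m\in\N$, put $M_0:=\HH^\alpha(\{y\in\R^N:d_\s(y,E)\leq1\})$, which is finite since this set is bounded and $\HH^\alpha$ is Ahlfors regular, and choose auxiliary parameters $\varepsilon'\in(0,1]$ and $\delta>0$ with $2\varepsilon'M_0+\delta\leq\varepsilon$ -- these $\varepsilon',m$ are what will be fed to the Corollary.

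Next I would run a Vitali covering argument. For $x\in E'$ the Corollary (with parameters $\varepsilon',m$) furnishes a radius $r_0(x)>0$. Since for fixed $x$ the spheres $\{y:d_\s(x,y)=r\}$, $r>0$, are pairwise disjoint and bounded subsets of $\R^N$ have finite $\HH^\alpha$-measure, one has $\HH^\alpha(\{y:d_\s(x,y)=r\})=0$ for all but countably many $r$; hence the family $\mathcal F=\{\overline B(x,r):x\in E',\ 0<r<\min(1,r_0(x)),\ \HH^\alpha(\{y:d_\s(x,y)=r\})=0\}$ is still a Vitali cover of $E'$ by closed balls. By the Vitali covering theorem for doubling measures there is a countable pairwise disjoint subfamily $\{B(x_\ell,r_\ell)\}_{\ell\geq1}\subset\mathcal F$ with $\HH^\alpha(E'\setminus\bigcup_\ell B(x_\ell,r_\ell))=0$, hence $\HH^\alpha(E\setminus\bigcup_\ell B(x_\ell,r_\ell))=0$. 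As the $B(x_\ell,r_\ell)$ are disjoint and contained in $\{y:d_\s(y,E)\leq1\}$, $\sum_\ell\HH^\alpha(B(x_\ell,r_\ell))\leq M_0$, so I can pick $N_0\in\N$ with $\sum_{\ell>N_0}\HH^\alpha(B(x_\ell,r_\ell))<\delta$; writing $U:=\bigcup_{\ell\leq N_0}B(x_\ell,r_\ell)$, this gives $\HH^\alpha(E\setminus U)<\delta$.

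Then I would pass to the cubes. Let $\rho_i:=N h^{is}$ be the $d_\s$-diameter of any level-$i$ cube $I^i_j$ (it depends only on $i$, and $\rho_i\to0$). Because each $\overline B(x_\ell,r_\ell)$ has $\HH^\alpha$-null sphere, $\HH^\alpha(B(x_\ell,r_\ell+\rho))\to\HH^\alpha(B(x_\ell,r_\ell))$ as $\rho\to0^+$, and since $\ell$ ranges over a finite set there is $i_0$ with $\HH^\alpha(B(x_\ell,r_\ell+\rho_i))\leq(1+\varepsilon'/m)\HH^\alpha(B(x_\ell,r_\ell))$ for all $i\geq i_0$ and all $\ell\leq N_0$. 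Fix $i\geq i_0$. For $\ell\leq N_0$ set $J^i_\ell=\{j\in\Z^N:I^i_j\cap B(x_\ell,r_\ell)\neq\varnothing\}$; then $\{I^i_j\}_{j\in J^i_\ell}$ is a covering of $B(x_\ell,r_\ell)$ and, by the triangle inequality for $d_\s$, $\bigcup_{j\in J^i_\ell}I^i_j\subset B(x_\ell,r_\ell+\rho_i)$, so the covering hypothesis of the Corollary is met and $\sum_{j\in J^i_\ell\cap\EE^i_m}\HH^\alpha(E\cap I^i_j)\leq\varepsilon'(1+\varepsilon'/m)\HH^\alpha(B(x_\ell,r_\ell))$. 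Finally, every $j\in\EE^i_m$ with $E\cap I^i_j\neq\varnothing$ either meets $U$, and hence lies in some $J^i_\ell$ with $\ell\leq N_0$, or satisfies $I^i_j\subset\R^N\setminus U$, and hence $E\cap I^i_j\subset E\setminus U$; using that the cubes of a fixed level $i$ have pairwise disjoint interiors and boundaries contained in $\HH^\alpha$-null hyperplanes, I would sum to obtain
\[
\sum_{j\in\EE^i_m}\HH^\alpha(E\cap I^i_j)\leq\varepsilon'(1+\varepsilon'/m)\sum_{\ell=1}^{N_0}\HH^\alpha\big(B(x_\ell,r_\ell)\big)+\HH^\alpha(E\setminus U)\leq 2\varepsilon'M_0+\delta\leq\varepsilon,
\]
which is the assertion (with $i_0$ as just defined).

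The main obstacle is exactly this globalisation: turning a purely local estimate valid near density points into a uniform bound over all cubes of level $i$. The two delicate points are (i) covering $E$ by finitely many disjoint balls on which the Corollary holds while losing only an arbitrarily small amount of $\HH^\alpha$-mass, and (ii) controlling the cubes that straddle the boundaries of these balls -- which is precisely why the balls must be chosen with $\HH^\alpha$-null spheres and why one exploits that $\rho_i\to0$ uniformly in $j$.
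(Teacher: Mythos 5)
Your argument is correct and follows the same basic strategy as the paper: pass to the set of density points of $E$, cover it by balls on which the preceding Corollary applies, reduce to finitely many essentially disjoint balls, and sum the local estimates. The implementations differ in the covering step: the paper takes a compact subset $E''$ of the density set by inner regularity, extracts a finite subcover by compactness, applies the finite Vitali ($3r$-)covering lemma, and then runs the Corollary on the enlarged balls $B(x_k,3r_k)$, comparing their mass to that of the disjoint balls $B(x_k,r_k)$ via Ahlfors regularity and bounding the total by $\HH^\alpha(K)$ for a large compact $K$; you instead invoke the full Vitali covering theorem for the doubling measure $\HH^\alpha$ to cover the density set up to measure zero by countably many pairwise disjoint balls, truncate to a finite union $U$, apply the Corollary directly on each (un-enlarged) ball, and absorb the cubes disjoint from $U$ into $\HH^\alpha(E\setminus U)<\delta$. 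Your route avoids inner regularity and the $3r$-enlargement bookkeeping at the cost of using the stronger (measure-theoretic) covering theorem, and it is in fact somewhat more careful on a point the paper leaves implicit, namely the choice of $i_0$ making the covering-ratio hypothesis of the Corollary hold: you justify it by selecting radii with $\HH^\alpha$-null spheres, so that $\HH^\alpha\big(B(x_\ell,r_\ell+\rho_i)\big)\to\HH^\alpha\big(B(x_\ell,r_\ell)\big)$ as $i\to\infty$, uniformly over the finitely many balls retained.
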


\begin{proof}

Let $K$ be a sufficiently large compact containing $E$. Let $\varepsilon >0$ and $m\in\N$. Denote by $E'$ the set of points of density in $E$. Recall that $\HH^\alpha(E\setminus E')=0$. By inner regularity of $\HH^\alpha$, there exists a compact set $E''\subset E'$ such that $\HH^\alpha(E'\setminus E'')\leq \varepsilon$. For all $x\in E''$, there exists $r_0(x)>0$ given by Lemma \ref{number_bad}. For all $x\in E''$, choose $r_x\in (0,r_0(x)/3)$ such that $B(x,r_x)\in K$. Then $\{B(x,r_x)\}_{x\in E''}$ covers $E''$. By compactness, there exists a finite subfamily $\{B(x_k,r_k)\}_{1\leq k\leq n}$ of $\{B(x,r_x)\}_{x\in E''}$ that covers $E''$. 
By Vitali covering Lemma, we can extract another subfamily of disjoint balls, say $\{B(x_k,r_k)\}_{1\leq k\leq n_0}$ such that 
$$E''\subset \bigcup_{k=1}^{n_0} B(x_k,3r_k).$$
Then choose $i_0$ sufficiently large such that for all $i\geq i_0$ and all $1\leq k\leq n_0$, 
$$\frac{\HH^\alpha\Big(\bigcup_{j\in J^i_k}I^i_j\Big)}{\HH^\alpha\big(B(x_k,3r_k)\big)}\leq1+\frac{\varepsilon}{m},$$
where $J^i_k=\left\{j\in\Z^N, B(x_k,3r_k)\cap I^i_j\neq\varnothing\right\}$.
Then 
\begin{align}
\sum_{j\in\EE^i_m}\HH^\alpha(E\cap I^i_j)&\leq \HH^\alpha(E\setminus E')+ \HH^\alpha(E'\setminus E'')+ \sum_{j\in\EE^i_m}\HH^\alpha (E''\cap I^i_j)\nonumber \\
								&\leq \varepsilon +\sum_{k=1}^{n_0}\sum_{j\in J^i_k\cap\EE^i_m} \HH^\alpha(E''\cap I^i_j) \nonumber \\
								&\leq \varepsilon +\sum_{k=1}^{n_0}\varepsilon\left(1+\frac{\varepsilon}{m}\right)\HH^\alpha\big(B(x_k,3r_k)\big) \quad\text{by Corollary }1 \nonumber \\
								&\leq \varepsilon +C\varepsilon\left(1+\frac{\varepsilon}{m}\right) \sum_{k=1}^{n_0} \HH^\alpha\big(B(x_k,r_k)\big) \quad\text{by Ahlfors regularity} \nonumber \\
								&\leq \varepsilon +C\varepsilon\left(1+\frac{\varepsilon}{m}\right)\HH^\alpha\Big(\bigcup_{k=1}^{n_0}B(x_k,r_k)\Big) \quad\text{since the balls are disjoint} \nonumber \\
								&\leq \varepsilon +C\varepsilon\left(1+\frac{\varepsilon}{m}\right) \HH^\alpha(K), \nonumber 
\end{align}
which proves the lemma.
\end{proof}

\begin{proof}[Proof of Proposition \ref{sequence}]
Suppose that Proposition \ref{sequence} is false. For all $c>0$, there exists $m\in\N$ such that for all $(i,j)\in Adm$, either $j\in\EE^i_m$ or $h^{-is\alpha}\HH^\alpha\big(g(A\cap I^{i}_j)\big) <c$. 

Fix $c>0$ and the corresponding integer $m$. By Lemma \ref{measure_bad}, there exists an increasing sequence $(i_n)_{n\in\N}$ of integers such that for all $n\in\N$,
$$\sum_{j\in\EE^{i_n}_m}\HH^\alpha(A\cap I^{i_n}_j)\leq\frac{1}{2^n}.$$
Given $n_0\in\N$, define $\FF_{n_0}=\varnothing$ and iteratively for $n>n_0$
$$\FF_n=\left\{j\in\Z^N|\ (i_n,j)\in Adm,\ h^{-i_ns\alpha}\HH^\alpha\big(g(A\cap I^{i_n}_j)\big)< c,\text{ and } I^{i_n}_j\cap\left(\bigcup_{n'=n_0}^{n-1}\bigcup_{j'\in\FF_{n'}}I^{i_{n'}}_{j'}\right)=\varnothing\right\}.$$
Now we set
$$B=A\setminus\bigcup_{n=n_0}^{+\infty}\Bigg(\bigcup_{\substack{j\in\EE^{i_n}_m\\ (i_n,j)\in Adm}}I^{i_n}_j\cup \bigcup_{j\in\FF_n}I^{i_n}_j\Bigg).$$

Let us prove that $\HH^\alpha(B)=0$. Suppose that $\HH^\alpha(B)>0$. Choose a point of density $x\in B$. We prove that there exists $\varepsilon >0$ and a sequence $(r_n)_{n\in\N}$ such that $r_n\to 0$ and for all $n\in\N$, 
\begin{equation}\label{estimate_ball}
\frac{\HH^\alpha\big(B\cap B(x,r_n)\big)}{\HH^\alpha\big(B(x,r_n)\big)}\leq 1-\varepsilon,
\end{equation}
which contradicts the fact that $x$ is a point of density. 
Let $r_n=2Nh^{(i_n-1)s}$. By definition of $r_n$, 
$$\prod_{k=1}^N[x_k-h^{(i_n-1)s/s_k},x_k+h^{(i_n-1)s/s_k}]\subset \overline{B(x,r_n/2)},$$
thus if we set $j_n=(\lfloor x_1/h^{(i_n-1)s/s_1}\rfloor,\dots,\lfloor x_N/h^{(i_n-1)s/s_N}\rfloor)$, then 
$$I^{i_n-1}_{j_n}\subset\prod_{k=1}^N[x_k-h^{(i_n-1)s/s_k},x_k+h^{(i_n-1)s/s_k}]\subset \overline{B(x,r_n/2)}.$$

For all $n\in\N$, there exists $j'_n\in \Z^N$ such that $(i_n,j'_n)\in Adm$ and $I^{i_n}_{j'_n}\subset I^{i_n-1}_{j_n}$. Now there are two cases: either $j'_n\in\EE^{i_n}_m$ or $h^{-i_ns\alpha}\HH^\alpha\big(g(A\cap I^{i_n}_{j'_n})\big)<c$.

\begin{figure}
\begin{center}
\begin{tikzpicture}[scale=1.7]
\draw (-2,0) node{$\bullet$};
\draw (-2,0) node[below left]{$x$};
\draw (-1,0) arc(0:120:1);
\draw (0,0) arc(0:120:2);
\draw (-2.3,0.4)--(-2.3,-0.4)--(-1.6,-0.4)--(-1.6,0.4)--(-2.3,0.4);
\draw (-1.95,0.3)--(-1.95,-0.1)--(-1.65,-0.1)--(-1.65,0.3)--(-1.95,0.3);
\draw (-1,1) node[above left]{$B(x,r_n/2)$};
\draw (0.5,1) node[above left]{$B(x,r_n)$};
\draw (-2.1,-0.77) node[above]{$I^{i_n-1}_{j_n}$};
\draw (-1.78,0.063) node{$I^{i_n}_{j'_n}$};
\draw [dotted] (-1.5,0.7)--(-1.5,0.2)--(-2.5,0.2)--(-2.5,0.7);
\draw (-2.4,0.65) node[left]{$I^{i_{n'}}_{j'}$};
\draw [dotted](-1.95,0.6)--(-1.95,0.2)--(-1.65,0.2)--(-1.65,0.6)--(-1.95,0.6);
\draw [very thin] (-1.8,0.25)--(-1.8,0.5);
\draw [very thin] (-1.75,0.45)--(-1.8,0.5);
\draw [very thin] (-1.85,0.45)--(-1.8,0.5);
\end{tikzpicture}
\caption{Construction of a cube isometric to $I^{i_n}_{j_n'}$ that does not intersect $B\cap B(x,r_n)$.}
\label{density}
\end{center}
\end{figure}
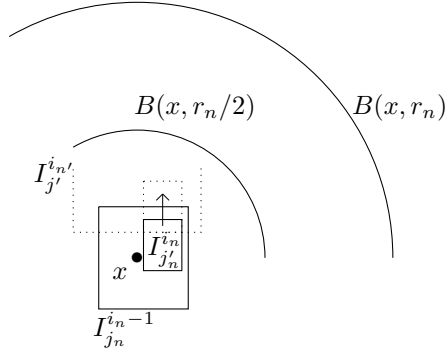

\begin{enumerate}
\item If $j'_n\in \EE^{i_n}_m$, then $B\cap I^{i_n}_{j'_n}=\varnothing$. Thus $B\cap B(x,r_n)=B\cap \big(B(x,r_n)\setminus I^{i_n}_{j'}\big)$, and we can write 
$$\frac{\HH^\alpha\big(B\cap B(x,r_n)\big)}{\HH^\alpha\big(B(x,r_n)\big)}\leq \frac{\HH^\alpha\big(B(x,r_n)\setminus I^{i_n}_{j'_n}\big)}{\HH^\alpha\big(B(x,r_n)\big)}\leq 1 -\frac{\HH^\alpha(I^{i_n}_{j'_n})}{Cr_n^\alpha}=1-h^{s\alpha}\frac{\HH^\alpha([0,1]^N)}{C(2N)^\alpha},$$
where $C$ is a constant given by Ahlfors regularity. This proves \eqref{estimate_ball} in this case. 
\item If $h^{-i_ns\alpha}\HH^\alpha\big(g(A\cap I^{i_n}_{j'_n})\big)<c$, then either $j'_n\in \FF_n$ or $j'_n\notin \FF_n$. If $j'_n\in \FF_n$, then $B\cap I^{i_n}_{j'_n}=\varnothing$ and we conclude as in the first case. If $j'_n\notin \FF_n$, then
$$I^{i_n}_{j'_n}\cap\left(\bigcup_{n'=n_0}^{n-1}\bigcup_{j'\in\FF_{n'}}I^{i_{n'}}_{j'}\right)\neq\varnothing.$$
Thus there exists $n'\in\left\{n_0,\dots, n-1\right\}$ and $j'\in\FF_{n'}$ such that $I^{i_n}_{j'_n}\cap I^{i_{n'}}_{j'}\neq \varnothing$. By construction, $B\cap I^{i_{n'}}_{j'}=\varnothing$. If one translates the cube $I^{i_n}_{j'_n}$ so that the image lies in $I^{i_{n'}}_{j'}$ and intersects $I^{i_n}_{j'_n}$, then the translated cube is in $B(x,r_n)$ but not in $B$, and has the same measure that $I^{i_n}_{j'_n}$, so the same estimate as in the first case holds (see Figure \ref{density}).
\end{enumerate}

Thus $\HH^\alpha(B)=0$, and since $g$ is Lipschitz, $\HH^\alpha\big(g(B)\big)=0$. 
With these we can write 
\begin{align*}
\HH^\alpha\big(g(A)\big) &\leq \HH^\alpha\big(g(A\setminus B)\big) +\HH^\alpha\big(g(A\cap B)\big) \\
				   &\leq \sum_{n=n_0}^{+\infty}\Bigg(\sum_{\substack{j\in\EE^{i_n}_m \\ (i_n,j)\in Adm}}\HH^\alpha\big(g(A\cap I^{i_n}_j)\big) +\sum_{j\in \FF_n}\HH^\alpha\big(g(A\cap I^{i_n}_{j})\big)\Bigg) \\
				   &\leq \sum_{n=n_0}^{+\infty}\Bigg(\sum_{\substack{j\in\EE^{i_n}_m \\ (i_n,j)\in Adm}}L^\alpha \HH^\alpha(A\cap I^{i_n}_j) +c\sum_{j\in\FF_n} h^{i_ns\alpha}\Bigg) \nonumber \\
				   &\leq \sum_{n=n_0}^{+\infty}\Bigg(L^\alpha \sum_{j\in\EE^{i_n}_m}\HH^\alpha(A\cap I^{i_n}_j) +c\sum_{j\in\FF_n} \frac{\HH^\alpha(I^{i_n}_j)}{\HH^\alpha([0,1]^N)}\Bigg)\\
				   &\leq L^\alpha\sum_{n=n_0}^{+\infty}\frac{1}{2^n}+\frac{c}{\HH^\alpha([0,1]^N)}\sum_{n=n_0}^{+\infty}\sum_{j\in\FF_n}\HH^\alpha(I^{i_n}_j) \\
				  &\leq L^\alpha \frac{1}{2^{n_0-1}}+c\frac{\HH^\alpha([0,1]^N)}{\HH^\alpha([0,1]^N)},
\end{align*}
the last inequality is true since the cubes $(I^{i_n}_j)_{n\geq n_0, j\in\FF_n}$ have disjoint interiors by construction. Since the last inequality is true for all $c>0$ and all $n_0\in\N$, then $\HH^\alpha\big(g(A)\big)=0$, which contradicts the assumption on $A$.\end{proof}

Until the end of this section, we fix a constant $c>0$ and a sequence $\{(i_m,j_m)\in Adm,\ m\in\N\}$ given by Proposition \ref{sequence}. For all $m\in\N$, set $f_m=f_{i_m,j_m}$, $K_m=f_m^{-1}(A\cap I^{i_m}_{j_m})$, $g_m=g_{i_m,j_m}$ and $z_m=z_{i_m,j_m}$ the chosen point in $A\cap I^{i_m}_{j_m}$. For all $m\in\N$, $K_m$ is a compact of $[0,1]^N$.

\begin{lemma} There exists a subsequence of $(K_m)_{m\in\N}$ that converges to $[0,1]^N$ in the Hausdorff distance. 
\end{lemma}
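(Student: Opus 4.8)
The plan is to obtain the limit by a soft compactness argument whose only real input is the density estimate \eqref{density_1}. First I would rewrite \eqref{density_1} directly in terms of the sets $K_m$. By construction $f_m=f_{i_m,j_m}$ is a similitude of ratio $h^{i_ms}$ not only for $d_{\s,\RR}$ but also for $d_\s$: indeed $f_{i,j}(x)=\text{dil}_{h^{is}}(x+j)$, translations are $d_\s$-isometries, and $\text{dil}_{h^{is}}$ is a $d_\s$-similitude of ratio $h^{is}$. Hence the measure $\HH^\alpha=\HH^\alpha_{d_\s}$ scales by the factor $h^{i_ms\alpha}$ under $f_m$, so that $\HH^\alpha(A\cap I^{i_m}_{j_m})=h^{i_ms\alpha}\HH^\alpha(K_m)$ and $\HH^\alpha(I^{i_m}_{j_m})=h^{i_ms\alpha}\HH^\alpha([0,1]^N)$, and \eqref{density_1} becomes $\HH^\alpha(K_m)\geq(1-\tfrac1m)\HH^\alpha([0,1]^N)$. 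Since $K_m\subset[0,1]^N$ and $\HH^\alpha([0,1]^N)<\infty$ (a compact subset of an Ahlfors regular space has finite measure), this yields $\HH^\alpha([0,1]^N\setminus K_m)\to0$, equivalently $\HH^\alpha(K_m)\to\HH^\alpha([0,1]^N)$.

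Next I would extract a convergent subsequence. Each $K_m$ is a nonempty compact subset of the compact set $[0,1]^N$, so by Blaschke's theorem there is a subsequence with $K_{m_k}\overset{H}{\longrightarrow}K$ for some compact $K$; since $[0,1]^N$ is closed and contains every $K_{m_k}$, we get $K\subset[0,1]^N$ (cf. Proposition \ref{kuratowski}). I would then identify $K$ by a measure comparison: the upper semicontinuity of Hausdorff measure, Lemma \ref{semi_continuity} applied in the Ahlfors $\alpha$-regular space $(\R^N,d_\s)$ (Proposition \ref{product_snowflakes}), gives
$\HH^\alpha([0,1]^N)=\lim_k\HH^\alpha(K_{m_k})\leq\HH^\alpha(K)\leq\HH^\alpha([0,1]^N)$, hence $\HH^\alpha([0,1]^N\setminus K)=0$. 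If $K\neq[0,1]^N$, picking $y\in[0,1]^N\setminus K$ and a small $\rho>0$ with $B_{d_\s}(y,\rho)\cap K=\varnothing$ produces a nonempty relatively open set $U=B_{d_\s}(y,\rho)\cap[0,1]^N$ contained in $[0,1]^N\setminus K$, hence with $\HH^\alpha(U)=0$; but $\HH^\alpha_{d_\s}$ is comparable to Lebesgue measure on $\R^N$ (Lemma \ref{sufficient_condition} applied to $(\R^N,d_\s,\LL^N)$, whose balls are comparable to boxes), so $\HH^\alpha(U)>0$, a contradiction. Therefore $K=[0,1]^N$, i.e. $K_{m_k}\overset{H}{\longrightarrow}[0,1]^N$.

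I do not expect a genuine obstacle here; the one point demanding care is purely bookkeeping about which (semi-)distance each Hausdorff measure refers to. The density bound \eqref{density_1} and Lemma \ref{semi_continuity} both involve $\HH^\alpha_{d_\s}$, whereas $f_m$ is a similitude for $d_{\s,\RR}$ by the very definition of $Adm$; the step that makes everything go through is the observation that $f_m$ is \emph{also} a similitude of the same ratio for $d_\s$, which is what lets the density hypothesis transfer verbatim to the rescaled sets $K_m$. (In fact the argument shows that every Hausdorff-subsequential limit of $(K_m)$ equals $[0,1]^N$, so the full sequence converges; but only a subsequence is needed in the sequel.)
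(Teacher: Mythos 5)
Your proposal is correct and follows essentially the same route as the paper: translate the density bound \eqref{density_1} into $\HH^\alpha(K_m)\to\HH^\alpha([0,1]^N)$ via the scaling of $f_m$, extract a Hausdorff-convergent subsequence by Blaschke, apply the upper semicontinuity of Lemma \ref{semi_continuity}, and conclude $K=[0,1]^N$ because a proper compact subset of the cube has strictly smaller measure. Your only addition is to spell out that last step (and the fact that $f_m$ is also a $d_\s$-similitude) explicitly, which the paper leaves as a remark.
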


\begin{proof} By Proposition \ref{sequence},
\begin{equation}\label{convergenc}\HH^\alpha(K_m)=h^{-i_ms\alpha}\HH^\alpha(A\cap I^{i_m}_{j_m})=\HH^\alpha\big([0,1]^N\big)\frac{\HH^\alpha(A\cap I^{i_m}_{j_m})}{\HH^\alpha(I^{i_m}_{j_m})}\underset{m\to +\infty}{\longrightarrow}\HH^\alpha\big([0,1]^N\big).
\end{equation}
For all $m\in\N$, $K_m$ is a compact of $[0,1]^N$, so by the Blaschke Theorem, we can suppose that $K_m\overset{H}{\underset{m\to +\infty}{\longrightarrow}}K$. By using \eqref{convergenc} and Lemma \ref{semi_continuity}, $\HH^\alpha(K)=\HH^\alpha([0,1]^N)$. Then $K=[0,1]^N$, because a compact strictly contained in $[0,1]^N$ has Hausdorff $\alpha$-measure strictly less than $\HH^\alpha\big([0,1]^N\big)$. 
\end{proof}

We can now prove Theorem \ref{blow-up}.
\begin{proof}[Proof of Theorem \ref{blow-up}] The proof is pretty much the same as the proof of Ascoli-Arzel\`a Theorem.
Let $E$ be a dense subset of $[0,1]^N$. By Proposition \ref{kuratowski}, for all $x\in E$, there exists a sequence $(x_m)$ such that for all $m\in\N,\ x_m\in K_m$, and $d_\s(x_m,x)\to 0$. From now on, we fix such a sequence, for all $x\in E$. 

We now prove that there exists a compact $K$ such that for all $m\in\N, g_m(K_m)\subset K$. Let $m\in\N$ and $x\in K_m$. Then

\begin{align}
d_\s(g_m(x),0) 	&\leq h^{-i_ms}d_\s\big(g(f_m(x))-g(z_m),0\big) \nonumber \\
				&\leq h^{-i_ms}d_\s\big(g(f_m(x)),g(z_m)\big) \nonumber \\
				&\leq Lh^{-i_ms}d_{\s,\RR}(f_m(x),z_m) \nonumber \\
				&\leq Lh^{-i_ms}d_\s(f_m(x),z_m) \nonumber \\
				&\leq L h^{-i_ms}\text{diam}_{d_\s} I_{j_m}^{i_m},\nonumber
\end{align}
but $\text{diam}_{d_\s} I_{j_m}^{i_m}=Nh^{i_ms}$; then $d_\s\big(g_m(x),0\big)\leq L N.$ Thus $g_m(K_m)\subset \overline{B}(0,LN)$, which is compact. The usual Cantor diagonalization argument allows us to choose a subsequence (that we will still denote by $m$) such that $\big(g_{m}(x_{m})\big)_{m\in\N}$ converges (for the distance $d_\s$) for all $x\in E$ (that is, for all the sequences $(x_m)$ fixed above). Denote $f(x)$ the limit. The map $f:(E,d_{\s,\RR})\to (\R^N,d_\s)$ is Lipschitz. In fact, for all $x,y\in E$, let $(x_m), (y_m)$ the two fixed sequences converging to $x,y$ for the distance $d_\s$. Then for all $m\in\N$,
\begin{align*}
d_{\s,\RR}\big(f(x),f(y)\big) &\leq d_{\s,\RR}\big(f(x),g_m(x_m)\big) + d_{\s,\RR}\big(g_m(x_m),g_m(y_m)\big)+d_{\s,\RR}\big(g_m(y_m),f(y)\big) \\
					&\leq  d_{\s}\big(f(x),g_m(x_m)\big) + L d_{\s}(x_m,y_m)+d_{\s}\big(g_m(y_m),f(y)\big) \\
					&\leq  d_{\s}\big(f(x),g_m(x_m)\big) + L\Big( d_{\s}\big(x_m,x\big)+d_{\s}(x,y)+d_{\s}(y,y_m)\Big)\\
					&\quad +d_{\s}\big(g_m(y_m),f(y)\big)\\
					&\leq L d_\s(x,y).
\end{align*}
By a standard argument, there is a unique extension of $f$ (that we still denote by $f$) defined on $[0,1]^N$ such that $f:([0,1]^N,d_{\s,\RR})\to (\R^N,d_\s)$ is Lipschitz. Moreover, $(g_m)$ "converges pointwise" in the following sense: for all $x\in [0,1]^N$, all sequences $(x_m)$ (not only the ones fixed above) such that $x_m\in K_m$ and $d_{\s,\RR}(x_m,x)\underset{m\to +\infty}{\longrightarrow}0$, we have $d_\s\big(g_m(x_m),f(x)\big)\underset{m\to+\infty}{\longrightarrow}0$. In fact, by density of $E$, for all $p\in\N$, there exists $x^{(p)}\in E$ such that $d_\s(x^{(p)},x)\leq 1/2^p$. For all $p$, denote by $(x^{(p)}_m)_{m\in\N}$ the sequence converging to $x^{(p)}$ that we choose above. Then 
\begin{align*}
d_\s\big(g_m(x_m),f(x)\big) &\leq d_\s\big(g_m(x_m),g_m(x_m^{(p)})\big) + d_\s\big(g_m(x_m^{(p)}),f(x^{(p)})\big) + d_\s\big(f(x^{(p)}),f(x)\big) \\
					 &\leq L d_{\s,\RR}(x_m,x_m^{(p)}) + d_\s\big(g_m(x_m^{(p)}),f(x^{(p)})\big) + L d_{\s,\RR}(x^{(p)},x)
\end{align*}
and the result holds by letting $m\to +\infty$ and then $p\to +\infty$.

We now want to prove that $\HH^\alpha\big(f([0,1]^N)\big)>0$. By Lemma \ref{semi_continuity}, and Proposition \ref{sequence}, it is sufficient to prove that $d_H\big(g_m(K_m),f([0,1]^N)\big)\underset{m\to +\infty}{\longrightarrow}0$, because then 
$$0 < c\leq \underset{m\to +\infty}{\lim\sup}\ \HH^\alpha\big(g_m(K_m)\big) \leq \HH^\alpha\big(f([0,1]^N)\big).$$
Let us remark that since $g_m(K_m)\subset \overline{B}(0,Ln)$, then $f([0,1]^N)\subset \overline{B}(0,LN)$ by pointwise convergence, thus we can use the equivalence in Proposition \ref{kuratowski} to prove that $g_m(K_m)\overset{H}{\longrightarrow}f([0,1]^N)$.
The first point of Proposition \ref{kuratowski} is true by pointwise convergence of $g_m$ to $f$. For the second point, let $y=\underset{k\to +\infty}{\lim} g_{m_k}(x_{m_k})$ where, $\{x_m\}$ is a sequence such that for all $m\in\N, x_m\in K_m$. We want to prove that $y\in f([0,1]^N)$. Without loss of generality, since $K_m\subset[0,1]^N$ we can suppose that $x_{m_k}\underset{k\to +\infty}{\longrightarrow}x\in [0,1]^N$. Then $g_{m_k}(x_{m_k})\to f(x)=y$ by pointwise convergence. 

This proves that $d_H\big(g_m(K_m),f([0,1]^N)\big)\underset{m\to +\infty}{\longrightarrow}0$, and thus $\HH^\alpha\big(f([0,1]^N)\big)\geq c>0$.
\end{proof}

\section{$(\R^N,d_\s)$ is not minimal for looking down}\label{conclusion}
We know that $(\R^N,d_\s)$ looks down on $([0,1]^N/d_{\s,\RR},\overline{d}_{\s,\RR})$, since $d_{\s,\RR}\leq d_\s$ and the projection $\pi : ([0,1]^N,d_{\s,\RR})\to ([0,1]^N/d_{\s,\RR},\overline{d}_{\s,\RR})$ is an isometry. 
\begin{proposition}\label{proposition_lipschitz} A Lipschitz map $f:([0,1]^N,d_{\s,\RR})\to (\R^N,d_\s)$ verifies 
$\HH^\alpha\big(f([0,1]^N)\big)=0$.
\end{proposition}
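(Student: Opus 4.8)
The plan is to show that $f$ is constant in every coordinate belonging to the minimally snowflaked layer $L$; then $f([0,1]^N)$ lies in a proper affine subspace of $\R^N$, which is $\HH^\alpha$-negligible.

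I would first record that $\HH^\alpha_{d_\s}$ is comparable to Lebesgue measure $\LL^N$ on Borel sets. Indeed $(\R^N,d_\s,\LL^N)$ is Ahlfors $\alpha$-regular with $\alpha=\sum_k s_k^{-1}$, since a $d_\s$-ball $B_{d_\s}(x,r)$ contains $\prod_k(x_k-(r/N)^{1/s_k},x_k+(r/N)^{1/s_k})$ and is contained in $\prod_k(x_k-r^{1/s_k},x_k+r^{1/s_k})$, both of Lebesgue measure comparable to $r^{\sum_k 1/s_k}$; Lemma \ref{sufficient_condition} then gives the comparison. In particular any bounded subset of an affine subspace of $\R^N$ of dimension $<N$ has $\HH^\alpha_{d_\s}$-measure zero.

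The core step is the claim that for each $k\in L$ the function $x\mapsto(f(x))_k$ is constant on $[0,1]^N$. Since $s_k=s$, the $k$-th coordinate projection $\pi_k\colon(\R^N,d_\s)\to(\R,|\cdot|^{s_k})$ is $1$-Lipschitz, so $\pi_k\circ f\colon([0,1]^N,d_{\s,\RR})\to(\R,|\cdot|^{s})$ is Lipschitz, with some constant $L$. Fix all coordinates but the $j$-th one and let $g$ be the resulting one-variable restriction on $[0,1]$. If $j\in L$, then the restriction of $d_{\s,\RR}$ to that axis-parallel line is exactly $d_\RR$, so $g\colon([0,1],d_\RR)\to(\R,|\cdot|^s)$ is Lipschitz and hence constant by Proposition \ref{no_lipschitz_function}; this covers $j=k$ and every other index of $L$. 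If $j\notin L$, the restriction of $d_{\s,\RR}$ is $|\cdot|^{s_j}$, so $|g(t)-g(t')|\le L^{1/s}|t-t'|^{s_j/s}$ with $s_j/s>1$ (because $j\notin L$ means $s_j>s$ strictly), whence $g$ has vanishing derivative and is constant. Thus $x\mapsto(f(x))_k$ is constant in each variable separately on the cube, hence globally constant.

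Given the claim, $f([0,1]^N)$ is contained in the affine subspace $\{z\in\R^N : z_k=c_k\ \text{for all}\ k\in L\}$ of dimension $N-|L|<N$, and it is bounded since $f$ is Lipschitz, hence continuous, on the compact cube; by the first step $\HH^\alpha(f([0,1]^N))=0$. The main thing to be careful about is the bookkeeping in the core step — identifying the restriction of $d_{\s,\RR}$ to axis-parallel lines and checking that a Hölder exponent $>1$ forces constancy — but no new hard analysis is needed, the nontrivial ingredient Proposition \ref{no_lipschitz_function} being already available.
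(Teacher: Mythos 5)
Your proof is correct and follows essentially the same route as the paper: restrict $\pi_k\circ f$ (for $k\in L$) to axis-parallel lines, kill the $L$-directions with Proposition \ref{no_lipschitz_function} and the other directions with the H\"older-exponent-greater-than-one argument, and conclude that the image lies in a proper affine subspace of zero $\HH^\alpha_{d_\s}$-measure. The only difference is cosmetic — you argue via separate constancy in each variable instead of the paper's explicit piecewise-linear path, and you spell out the Ahlfors-regularity comparison with Lebesgue measure that the paper leaves implicit.
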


\begin{proof} Let $x,y\in\R^N$ and $i\in L$. Let $\pi_i$ be the canonical projection on the $i$-th coordinate. We denote by $\gamma :[1,N]\to \R^N$ the following piecewise linear curve: if $x=(x_1,\dots,x_N)$ and $y=(y_1,\dots,y_N)$ then for all $k\in\left\{1,\dots,N\right\}$
$$\gamma |_{[k,k+1]}(t)=(y_1,\dots,y_{k-1},x_k+(t-k)(y_k-x_k),x_{k+1},\dots,x_n).$$
$\gamma$ is a geodesic between $x$ and $y$ with the distance induced by the classical norm $\|\cdot\|_1$ in $\R^N$. For all $k\in\left\{1,\dots,N\right\}$ the map
$$\begin{array}{cccc}
\varphi_k : &([0,1],d_k) &\longrightarrow& (\R,|\cdot|^{s}) \\
&t&\longmapsto&(\pi_i\circ f)\big(\gamma(k+t)\big)
\end{array}
$$ 
is a Lipschitz map since $\gamma|_{[k,k+1]}$ and $\pi_i$ are Lipschitz. Here, $d_k=|\cdot|^{s_k}$ if $k\notin L$, and $d_k=d_{\RR}$ if $k\in L$. The case $k\notin L$ implies by a standard argument that $\varphi_k$ is constant, since $s<s_k$. The case $k\in L$ also implies that $\varphi_k$ is constant by Proposition \ref{no_lipschitz_function}. We conclude that $\pi_i\circ f$ is constant along the curve $\gamma$ and thus $\pi_i\big(f(x)\big)=\pi_i\big(f(y)\big)$, which proves that $\HH^\alpha\big(f([0,1]^N)\big)=0$.
\end{proof}
Suppose that $([0,1]^N/d_{\s,\RR},\overline{d}_{\s,\RR})$ looks down on $(\R^N,d_\s)$. Then there exist a closed subset $A\subset [0,1]^N/d_{\s,\RR}$ and an $L$-Lipschitz map $g:(A,\overline{d}_{\s,\RR})\to (\R^N,d_\s)$ such that $\HH^\alpha_{d_\s}(g(A))>0$. The map $\tilde{g}=g\circ \pi:(\pi^{-1}(A),d_{\s,\RR})\to (\R^N,d_\s)$ is Lipschitz, $\HH^\alpha_{d_\s}\big(\tilde{g}(A)\big)>0$ and $\pi^{-1}(A)$ is compact. Thus we can apply Theorem \ref{blow-up}. There exists a Lipschitz map $f:([0,1]^N,d_{\s,\RR})\to (\R^N,d_\s)$ such that $\HH^\alpha\big(f([0,1]^N)\big)>0$, which contradicts Proposition \ref{proposition_lipschitz}.

\nocite{*}
\bibliographystyle{amsplain}
\bibliography{bibli}

\end{document}